\providecommand{\U}[1]{\protect\rule{.1in}{.1in}}
\DeclareMathAlphabet{\mathpzc}{OT1}{pzc}{m}{it}
\newtheorem{theorem}{Theorem}[section]
\newtheorem{lemma}[theorem]{Lemma}
\newtheorem{proposition}[theorem]{Proposition}
\newtheorem{definition}[theorem]{Definition}
\newtheorem{corollary}[theorem]{Corollary}
\newtheorem{remark}[theorem]{Remark}
\newcommand{\vetor}[1]{\mathbf{#1}}
\newcommand{\capa}{{\rm \; cap}_p \,}
\newcommand{\Capa}{{\rm \; Cap}_p \,}
\newcommand{\osc}[1]{\mathop{\rm osc}_{#1}}
\newcommand\BBLfootnote[1]{%
  \begingroup
  \renewcommand\thefootnote{}\footnote{#1}%
  \addtocounter{footnote}{-1}%
  \endgroup
}
\begin{document}

\title{Equivalences among parabolicity,  comparison principle and capacity on complete
Riemannian manifolds }
 \author{\BBLfootnote{This work was partially supported by the ``Brazilian-French Network in Mathematics", a project sponsored by CNPq, from Brazil, and by Fondation des Sciences Math\'ematiques de Paris (FSMP), from France.} A. Aiolfi, L. Bonorino, J. Ripoll, M. Soret, and M. Ville}
\date{}
\maketitle

\begin{abstract}
In this work we establish new equivalences for the concept of $p$-parabolic Riemannian manifolds. We define a concept of comparison principle for elliptic PDE's on exterior domains of a complete Riemannian manifold $M$ and prove that $M$ is $p$-parabolic  if and only if this comparison principle holds for the $p$-Laplace equation. We show also that the $p$-parabolicity of $M$ implies the validity of this principle for more general elliptic PDS's and, in some cases, these results can be extended for non $p$-parabolic manifolds or unbounded solutions, provided that some growth of these solutions are assumed. 

\end{abstract}

\section{Introduction}

\qquad We recall that a complete Riemannian manifold $M$ is $p-$parabolic,
 $p~>~1,$ if $M$ does not admit a Green function that is, a positive real
function $G\left(  x,y\right)  $ defined for $x,y\in M$ with $x\neq y$  such that  $G\left(
\cdot,y\right)  $ is of $C^{1}$ class in $M\backslash\left\{  y\right\}  $ for
any fixed $y\in M,$ and%
\[
-\operatorname{div}\left(  \left\Vert \nabla G\left(  \cdot,y\right)
\right\Vert ^{p-2}\nabla G\left(  \cdot,y\right)  \right)  =\delta_{y},\text{
}y\in M.
\]
This equality is in the sense of distributions that is,%
\[
\int_{M}\left\langle \left\Vert \nabla G\left(  \cdot,y\right)  \right\Vert
^{p-2}\nabla G\left(  \cdot,y\right)  ,\nabla\varphi\right\rangle
=\varphi\left(  y\right)
\]
for all $\varphi\in C_{c}^{1}\left(  M\right)  .$

A well known subject of research is to find characterizations for the
$p-$parabolicity of complete Riemannian manifolds. Gathering together results
which have been proved in the last decades, see Grigor'yan \cite{Gr0} and Holopainen \cite{H0, H1, H1-5, H2} for instance, we have
 (the notions used in the
theorem are all defined below):

\begin{theorem}
\label{gather}Let $M$ be a complete Riemannian manifold. Then the following
alternatives are equivalent:

\qquad(a) $M$ is $p-$parabolic;

\qquad(b) the $p-$capacity of any compact subset of $M$ is zero;

\qquad(c) the $p-$capacity of some precompact open subset of $M$ is zero;

\qquad(d) any bounded from below supersolution for the $p-$Laplacian operator is constant.

\end{theorem}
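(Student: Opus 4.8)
The plan is to run the cycle (b) $\Rightarrow$ (c) $\Rightarrow$ (b) $\Rightarrow$ (d) $\Rightarrow$ (a) $\Rightarrow$ (b), so that all four statements become equivalent; the only genuinely hard link is the last, (a) $\Rightarrow$ (b). Throughout I would fix a smooth exhaustion $\Omega_1\Subset\Omega_2\Subset\cdots$ of $M$ by relatively compact open sets with smooth boundary, and for a compact $K\subset\Omega_j$ let $\psi_{K,j}$ denote the $p$-capacity potential of the condenser $(K,\Omega_j)$: the function which is $\equiv 1$ on a neighborhood of $K$, vanishes on $M\setminus\Omega_j$, is $p$-harmonic on $\Omega_j\setminus K$, and minimizes the $p$-energy, so that $\int_M\|\nabla\psi_{K,j}\|^p=\mathrm{cap}_p(K,\Omega_j)$. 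By the comparison principle the $\psi_{K,j}$ increase in $j$, while $\mathrm{cap}_p(K,\Omega_j)\downarrow\mathrm{cap}_p(K):=\mathrm{cap}_p(K,M)$; interior $C^{1,\alpha}$ and Harnack estimates then give $\psi_{K,j}\nearrow\psi_{K,\infty}$ locally uniformly to a $p$-harmonic function on $M\setminus K$ with $\int_M\|\nabla\psi_{K,\infty}\|^p\le\mathrm{cap}_p(K)$.

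(b) $\Rightarrow$ (c) is immediate from monotonicity. For (c) $\Rightarrow$ (b): if some relatively compact open $\Omega_0$ has $\mathrm{cap}_p(\Omega_0)=0$, pick a closed ball $\overline B\subset\Omega_0$, so $\mathrm{cap}_p(\overline B)=0$ by monotonicity; then $\psi_{\overline B,\infty}$ is $p$-harmonic, lies in $[0,1]$, equals $1$ on $\overline B$ and has zero $p$-energy, hence $\psi_{\overline B,\infty}\equiv 1$. Thus $\psi_{\overline B,j}\to 1$ locally uniformly, and for any compact $L$, once $j$ is large enough that $\psi_{\overline B,j}\ge\tfrac12$ on $L$, the competitor $\min(2\psi_{\overline B,j},1)$ yields $\mathrm{cap}_p(L)\le 2^p\,\mathrm{cap}_p(\overline B,\Omega_j)\to 0$; so $\mathrm{cap}_p(L)=0$ for every compact $L$, which is (b).

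(b) $\Rightarrow$ (d): let $u$ be a supersolution of the $p$-Laplacian bounded below, replaced by its lower semicontinuous $p$-superharmonic representative, and set $m:=\inf_M u$, so $u-m\ge 0$ has infimum $0$. If $u$ is not constant, the strong minimum principle shows $u-m$ never attains $0$, while lower semicontinuity gives a ball $B$ and $\varepsilon>0$ with $u-m\ge\varepsilon$ on $\overline B$. On $\Omega_j\setminus\overline B$, the function $\varepsilon\psi_{\overline B,j}$ is $p$-harmonic, equals $\varepsilon$ on $\partial B$ and $0$ on $\partial\Omega_j$, so it lies below $u-m$ on the boundary; the comparison principle gives $\varepsilon\psi_{\overline B,j}\le u-m$ throughout $\Omega_j\setminus\overline B$. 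Since $\mathrm{cap}_p(\overline B)=0$ by (b), letting $j\to\infty$ (as above, $\psi_{\overline B,j}\to 1$) yields $u-m\ge\varepsilon$ on all of $M$, contradicting $\inf_M(u-m)=0$; hence $u$ is constant. Next, (d) $\Rightarrow$ (a): if $M$ were not $p$-parabolic it would carry a Green function $G(\cdot,y)$, which is positive (hence bounded below), $p$-superharmonic on $M$ because $-\operatorname{div}(\|\nabla G\|^{p-2}\nabla G)=\delta_y\ge 0$, and non-constant because of its singularity at $y$; this contradicts (d), so $M$ is $p$-parabolic.

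Finally (a) $\Rightarrow$ (b), which I expect to be the main obstacle. Arguing contrapositively, suppose $\mathrm{cap}_p(K_0)>0$ for some compact $K_0$; the spreading argument of (c) $\Rightarrow$ (b), read in contrapositive form, shows $\mathrm{cap}_p(\overline{B_\rho(y)})>0$ for all sufficiently small geodesic balls. One then constructs $G(\cdot,y)$ as a double limit of condenser potentials: take the $p$-capacity potential $w_{\rho,j}$ of $(\overline{B_\rho(y)},\Omega_j)$, rescale it by the constant making it agree to leading order with the fundamental solution of the $p$-Laplacian on a fixed small ball about $y$, pass to the limit $j\to\infty$ (via Harnack and $C^{1,\alpha}$ estimates) to obtain $w_\rho$ on $M\setminus\overline{B_\rho(y)}$, and then let $\rho\to 0$; the limit is $p$-harmonic on $M\setminus\{y\}$, carries the correct singularity, and satisfies $-\operatorname{div}(\|\nabla G\|^{p-2}\nabla G)=\delta_y$ distributionally, contradicting $p$-parabolicity. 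The delicate points are keeping the normalizing constants bounded away from $0$ and $\infty$ as $\rho\to 0$ — which is exactly where the strict positivity $\mathrm{cap}_p(\overline{B_\rho(y)})>0$ enters — and checking that the distributional identity survives the double limit; the needed compactness is routine given the regularity theory for the $p$-Laplacian. Alternatively, since this equivalence is classical, one may simply invoke (a) $\Leftrightarrow$ (b) from Grigor'yan \cite{Gr0} and Holopainen \cite{H0, H1, H1-5, H2} and prove only the elementary links (b) $\Leftrightarrow$ (c) $\Leftrightarrow$ (d) above.
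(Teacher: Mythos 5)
The paper does not prove Theorem \ref{gather} at all: it is stated explicitly as a compilation of results from Grigor'yan and Holopainen, with citations standing in for the proof. Your fallback option --- invoke (a) $\Leftrightarrow$ (b) from the literature and prove the remaining links yourself --- is therefore exactly the paper's stance, except that you go further by actually supplying the elementary implications. Those implications are essentially sound: (b) $\Leftrightarrow$ (c) via monotonicity and the $\min(2\psi_{\overline B,j},1)$ spreading competitor is the standard proof of the dichotomy (it is in fact the content of the paper's Corollary \ref{Dichotomy}), and (b) $\Rightarrow$ (d) by comparing $\varepsilon\psi_{\overline B,j}$ with $u-m$ on $\Omega_j\setminus\overline B$ is correct; you do not even need the strong minimum principle there, only that a non-constant $u$ exceeds $m+\varepsilon$ on some ball. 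Two points deserve attention. First, in (d) $\Rightarrow$ (a) the Green function $G(\cdot,y)$ is not a supersolution in the sense the paper defines (a $C^1(M)$ function with $\Delta_p w\le 0$ weakly), since it is singular, or at least not known to be $C^1$, at the pole $y$; the standard fix is to pass to the truncation $\min(G,c)$ for a level $c$ strictly between $\inf G$ and $\sup G$, which is a bounded-below, non-constant supersolution on all of $M$. Second, your sketch of (a) $\Rightarrow$ (b) is not a proof: the normalization of the condenser potentials as $\rho\to 0$ and the survival of the distributional identity $-\operatorname{div}(\|\nabla G\|^{p-2}\nabla G)=\delta_y$ under the double limit are precisely where the real work lies (this is the substance of Holopainen's construction), so for that link the citation is the honest route --- which, again, is all the paper itself does. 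Minor technical debt throughout: your competitors are Lipschitz rather than $C^\infty_c$ as in the paper's definition \eqref{AdmissibleLocally} of $\mathcal{F}_{E,\Omega}$, so a one-line density remark is needed to close the gap between the two admissible classes.
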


The case $p=2$ is more classical, with a longer history, and there are
other equivalences (see \cite{Gr0,Gr}).

For a domain $\Omega\subset M$, we define the
$p$-capacity of a set $E\subset\Omega$ with respect to $\Omega$ 
by
\begin{equation}
\mathrm{\;cap}_{p}\,(E;\Omega):=\inf_{v\in\mathcal{F}_{E,\Omega}}\int
_{\Omega}|\nabla v|^{p}dx,\label{p-capacity}%
\end{equation}
where the infimum is taken over all the functions $v$ that belongs to 
\begin{equation}
 \mathcal{F}_{E,\Omega} = \{ v \in C^{\infty}_c(M) \, : \, v = 1 \; \; {\rm in } \; \; E \; \; {\rm and} \; \; {\rm supp}(v) \subset \Omega \}.
\label{AdmissibleLocally} 
\end{equation}

Observe that the $p-$capacity of a bounded subset $E\subset M$ with respect to $M$ can be obtained by taking a sequence of geodesic balls $B(o,R_{k})$   centered at a given point $o\in M,$ with $R_{k} \to+\infty$,  and taking the limit
\begin{equation}
\mathrm{\; Cap}_{p} \, (E) := \lim_{k \to+\infty} \mathrm{\; cap}_{p} \, (E;
B(o,R_{k})). \label{CapaIsLimitOfcapa}%
\end{equation}
It is not difficult to prove that the limit always exists, that does depend not  on the sequence of  balls and that 
$\mathrm{\; Cap}_{p} \, (E)=
\mathrm{\; cap}_{p} \, (E;M).$

Now we observe that as a result of the equivalence between $(b)$ and $(c)$ of Theorem \ref{gather} (see \cite{H0, H1, H1-5, H2}),  we obtain the following result which is an important tool to the study of $p$-parabolicity and comparison principle: 
\begin{corollary}
[Dichotomy]Let $M$ be a complete noncompact Riemannian manifold and $p>1$.
If$\mathrm{\;Cap}_{p}\,(B_{0})>0$ for some ball $B_{0}\subset M$, then
\[
\mathrm{\;Cap}_{p}\,(B)>0
\]
for any ball $B\subset M$. More generally, if$\mathrm{\;Cap}_{p}\,(E)>0$ for
some bounded set $E$, then $\mathrm{\;Cap}_{p}\,(F)>0$ for any set $F$ that
contains some interior point. \label{Dichotomy}
\end{corollary}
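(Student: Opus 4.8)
The plan is to argue by contraposition, using only the monotonicity of $p$-capacity together with the already-cited equivalence $(b)\Leftrightarrow(c)$ of Theorem~\ref{gather}. First I record two elementary facts. From \eqref{p-capacity}--\eqref{AdmissibleLocally}, if $E_1\subset E_2$ then every admissible competitor for $E_2$ is admissible for $E_1$, whence $\mathrm{cap}_p(E_1;\Omega)\le\mathrm{cap}_p(E_2;\Omega)$; passing to the limit in \eqref{CapaIsLimitOfcapa} gives $\mathrm{Cap}_p(E_1)\le\mathrm{Cap}_p(E_2)$, and recall also $\mathrm{Cap}_p(E)=\mathrm{cap}_p(E;M)$. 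Second, since $M$ is complete, Hopf--Rinow guarantees that any bounded subset of $M$ has compact closure.

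It is enough to prove the general statement, since the assertion about balls is the special case $E=B_0$ (a ball is bounded) and $F=B$ (a ball has interior points). So suppose, for contraposition, that $\mathrm{Cap}_p(F)=0$ for some $F\subset M$ possessing an interior point $x_0$. Pick $r>0$ with $B(x_0,r)\subset F$; then $B:=B(x_0,r)$ is a precompact open subset of $M$ and, by monotonicity, $\mathrm{Cap}_p(B)\le\mathrm{Cap}_p(F)=0$. Thus the $p$-capacity of some precompact open subset of $M$ vanishes, i.e.\ alternative $(c)$ of Theorem~\ref{gather} holds; by the equivalence $(c)\Leftrightarrow(b)$, alternative $(b)$ holds as well, so the $p$-capacity of every compact subset of $M$ is zero. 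Now let $E\subset M$ be any bounded set: its closure $\overline E$ is compact, hence $\mathrm{Cap}_p(\overline E)=0$, and monotonicity yields $\mathrm{Cap}_p(E)\le\mathrm{Cap}_p(\overline E)=0$. This shows that ``some $F$ with an interior point has zero $p$-capacity'' forces ``every bounded set has zero $p$-capacity'', which is precisely the contrapositive of the corollary.

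I do not expect a genuine obstacle here: the substantive content is the non-trivial implication $(c)\Rightarrow(b)$, which we are entitled to invoke. The only steps needing a word of care are the monotonicity of $\mathrm{Cap}_p$ under inclusion and the identity $\mathrm{Cap}_p(E)=\mathrm{cap}_p(E;M)$ (both already recorded in the text), together with the use of completeness to replace a bounded $E$ by the compact set $\overline E$ so that alternative $(b)$ can be applied.
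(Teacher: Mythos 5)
Your proof is correct and follows exactly the route the paper intends: the paper states the corollary as an immediate consequence of the equivalence $(b)\Leftrightarrow(c)$ of Theorem~\ref{gather} without writing out the details, and you have simply supplied the routine ingredients (monotonicity of $\mathrm{Cap}_p$ under inclusion and Hopf--Rinow to pass from a bounded set to its compact closure). No gap.
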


Recall that  the $p-$Laplacian operator $\Delta
_{p}$ is defined by%
\[
\Delta_{p}u=\operatorname{div}\left(  \left\Vert \nabla u\right\Vert
^{p-2}\nabla u\right)  ,\text{ }u\in C^{1}\left(  M\right).
\]
A function  $w\in C^{1}\left(  M\right)  $ is a supersolution of $\Delta_{p}$ if
$\Delta_{p}w\leq0$ in the weak sense.

In the main result of our paper we add two other equivalences in Theorem
\ref{gather}. The main one relates $p-$parabolicity with another very classical
notion, the comparison principle, which is likely the most fundamental tool on PDE (see \cite{PS}, Abstract). In the second one we use the $p-$capacity to show in an explict way that the $p-$parabolicity of a Riemannian manifold depends on its behaviour at infinity. 

We first define that $M$ satisfies the comparison principle for the $p-$Laplace
operator (or the $p-$comparison principle for short) if, given any exterior domain
$\Omega$ of $M$ ($\Omega = M \backslash K$, where $K$ is a compact subset), if $v,w\in
C^{0}\left(  \overline{\Omega}\right)  $ are bounded sub and super solutions for
$\Delta_{p}$ then, whenever $v|\partial\Omega\leq w|\partial\Omega,$ it
follows that $v\leq w$ in $\Omega.$ (In this case, we also say that the comparison principle for exterior domains in $M$ holds for the $p$-Laplace operator.)
We prove:

\begin{theorem}
\label{MT}Let $M$ be a complete Riemannian manifold. Then the following alternatives are equivalent:

\qquad (a)  $M$ is $p-$parabolic

\qquad (b)  $M$ satisfies the comparison principle for $\Delta_{p}$

\qquad (c) Given $o\in M$ there are sequences $R_1^k, R_2^k$ satisfying 
$R_{1}^{k}<R_{2}^{k}$ and%
\[
\lim_{k\rightarrow\infty}R_{1}^{k}=\lim_{k\rightarrow\infty}R_{2}^{k}=\infty
\]
\qquad \qquad such that
\[
\sup_{k} \mathrm{\; cap}_{p} \,
(B(o,R_{1}^k);B(o,R_{2}^k)) < +\infty\]
\end{theorem}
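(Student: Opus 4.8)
The plan is to prove the four implications $(a)\Rightarrow(c)$, $(c)\Rightarrow(a)$, $(a)\Rightarrow(b)$, $(b)\Rightarrow(a)$, using Theorem~\ref{gather} and Corollary~\ref{Dichotomy} freely.

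For $(a)\Rightarrow(c)$ I would simply observe that $p$-parabolicity gives $\mathrm{Cap}_p(B(o,k))=\lim_{R\to\infty}\mathrm{cap}_p(B(o,k);B(o,R))=0$ for every $k$ by Theorem~\ref{gather}(b), so one can choose $R_2^k>k$ with $\mathrm{cap}_p(B(o,k);B(o,R_2^k))<1$ and put $R_1^k=k$. For $(c)\Rightarrow(a)$ the main tool is the superadditivity of the ``$p$-resistance'' $\mathcal R_p(r,s):=\mathrm{cap}_p(B(o,r);B(o,s))^{-1/(p-1)}$, namely $\mathcal R_p(r,t)\ge\mathcal R_p(r,s)+\mathcal R_p(s,t)$ for $r<s<t$. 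I would prove this by gluing near-optimal test functions: if $\psi_1$ is admissible for the condenser $(B(o,r);B(o,s))$ and $\psi_2$ for $(B(o,s);B(o,t))$, then $\tau+(1-\tau)\psi_1$ on $B(o,s)$, continued by $\tau\psi_2$ on $B(o,t)\setminus B(o,s)$, is admissible for $(B(o,r);B(o,t))$, and minimizing the resulting energy over $\tau\in[0,1]$ produces exactly the exponent $-1/(p-1)$. Granting this, suppose (c) holds with bound $C$ but $M$ is not $p$-parabolic; then $\mathrm{Cap}_p(B(o,1))=a>0$ by Theorem~\ref{gather} and Corollary~\ref{Dichotomy}, so $\mathcal R_p(1,R)$ increases to $a^{-1/(p-1)}<\infty$ as $R\to\infty$. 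Taking $r=1$, $s=R_1^k$, $t=R_2^k$ (for $k$ large so $R_1^k>1$) gives $\mathcal R_p(1,R_2^k)\ge\mathcal R_p(1,R_1^k)+C^{-1/(p-1)}$, and letting $k\to\infty$ (both $R_1^k,R_2^k\to\infty$) forces $a^{-1/(p-1)}\ge a^{-1/(p-1)}+C^{-1/(p-1)}$, a contradiction. Hence $\mathrm{Cap}_p(B(o,1))=0$ and $M$ is $p$-parabolic by Theorem~\ref{gather}(c).

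For $(a)\Rightarrow(b)$, given an exterior domain $\Omega=M\setminus K$ and bounded sub/supersolutions $v,w\in C^0(\overline\Omega)$ with $v\le w$ on $\partial\Omega$, $|v|,|w|\le L$, I would show $D_\epsilon:=\{v>w+\epsilon\}=\varnothing$ for every $\epsilon>0$. By continuity $D_\epsilon$ is open and, with its closure, stays away from a compact collar $K_1\supset K$ of $K$. Pick $\theta\in C^\infty(M)$, $0\le\theta\le1$, with $\theta\equiv0$ near $K$ and $\theta\equiv1$ off $K_1$; and — this is where $(a)$ enters, through $\mathrm{Cap}_p$ of compact sets vanishing (Theorem~\ref{gather}(b)) — pick $\eta_j\in C^\infty_c(M)$, $0\le\eta_j\le1$, $\eta_j\equiv1$ on $\overline{B(o,j)}$, $\int_M|\nabla\eta_j|^p\to0$. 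Testing the difference of the weak inequalities for $v$ and $w$ against the admissible function $\phi_j:=(v-w-\epsilon)^+(\eta_j\theta)^p\in W^{1,p}_0(\Omega)$ ($\phi_j\ge0$, with compact support in $\Omega$ since $\theta$ kills a neighbourhood of $K$), and using $\theta\equiv1$ and $\nabla(v-w-\epsilon)^+=\nabla v-\nabla w$ on $D_\epsilon$, the bound $|v-w-\epsilon|\le2L$, and Hölder with exponents $p$ and $p/(p-1)$, I get
\[
0\le\int_{D_\epsilon}\eta_j^{p}\,\big\langle |\nabla v|^{p-2}\nabla v-|\nabla w|^{p-2}\nabla w,\ \nabla v-\nabla w\big\rangle\le C_1\Big(\int_{D_\epsilon}\eta_j^{p}\,(|\nabla v|^p+|\nabla w|^p)\Big)^{\frac{p-1}{p}}\Big(\int_M|\nabla\eta_j|^p\Big)^{\frac1p}.
\]
A Caccioppoli inequality for the subsolution $v$ (test function $(v+L)(\eta_j\theta)^p$) and for the supersolution $w$ (test function $(L-w)(\eta_j\theta)^p$) bounds $\int_{D_\epsilon}\eta_j^p(|\nabla v|^p+|\nabla w|^p)$ by a constant independent of $j$ (here $\int_M|\nabla\theta|^p<\infty$ is fixed and $\int_M|\nabla\eta_j|^p$ is bounded). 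Hence the right-hand side tends to $0$; monotone convergence ($\eta_j\uparrow1$) then forces $\int_{D_\epsilon}\langle\cdots\rangle=0$, so $\nabla v=\nabla w$ a.e.\ on $D_\epsilon$ by strict monotonicity of $\xi\mapsto|\xi|^{p-2}\xi$. Thus $v-w$ is locally constant on $D_\epsilon$; since each component of $D_\epsilon$ has nonempty boundary lying in $\Omega$ (it cannot exhaust a component of $\Omega$, on whose boundary $v-w\le0$, nor reach $\partial K$), and $v-w=\epsilon$ there, we get $v-w\equiv\epsilon$ on that component — impossible unless $D_\epsilon=\varnothing$. Letting $\epsilon\downarrow0$ gives $v\le w$ on $\Omega$.

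For $(b)\Rightarrow(a)$ I would argue by contraposition. If $M$ is not $p$-parabolic, then by Theorem~\ref{gather} $\mathrm{cap}_p(D;M)>0$ for some precompact open $D$, which I take to be a small geodesic ball with smooth boundary; the $p$-capacitary potential $u$ of $\overline D$ relative to $M$ (the increasing limit of the potentials of $\overline D$ in the balls $B(o,R)$) is $p$-harmonic in $\Omega:=M\setminus\overline D$, continuous up to $\partial\Omega$ with $u\equiv1$ there, $0\le u\le1$, and nonconstant since $\int_M|\nabla u|^p=\mathrm{Cap}_p(\overline D)>0$ (cf.\ Theorem~\ref{gather} and \cite{H1}); in particular $m:=\inf_\Omega u<1$. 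Fixing a constant $c\in(m,1)$, the pair $v\equiv c$ (a subsolution) and $w:=u$ (a supersolution) is bounded and continuous on $\overline\Omega$ with $v=c<1=w$ on $\partial\Omega$, yet $v>w$ wherever $u<c$; this violates the comparison principle for $\Delta_p$, so (b) fails. The technical core of the whole argument is the uniform Caccioppoli bound in $(a)\Rightarrow(b)$: the test functions must simultaneously vanish near $\partial K$ (to be admissible on the exterior domain) and exhaust $M$ at infinity, and it is precisely this bound that lets $p$-parabolicity, via $\int_M|\nabla\eta_j|^p\to0$, absorb the cutoff error; for $1<p<2$ one must also replace strict monotonicity of $\xi\mapsto|\xi|^{p-2}\xi$ by its standard quantitative $p<2$ form, while in the capacity direction the only substantive point is the superadditivity of $\mathcal R_p$, which is classical.
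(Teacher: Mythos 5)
Your proposal is correct in outline and reaches all four implications, but the route differs from the paper's in one essential place. For $(c)\Rightarrow(a)$ you use the classical superadditivity of the $p$-resistance, $\mathrm{cap}_p(B_r;B_t)^{-1/(p-1)}\ge \mathrm{cap}_p(B_r;B_s)^{-1/(p-1)}+\mathrm{cap}_p(B_s;B_t)^{-1/(p-1)}$, obtained by gluing near-minimizers and optimizing the intermediate level $\tau$; combined with the monotone limit $\mathrm{cap}_p(B_1;B_R)\downarrow \mathrm{Cap}_p(B_1)$ this gives a short, purely capacitary contradiction. The paper instead closes the loop $(c)\Rightarrow(b)\Rightarrow(a)$: bounded annular capacities give the comparison principle via its general comparison theorem (Theorem \ref{GenericComparisonTheorem} together with Corollary \ref{genericComparisonResultForBoundedSol}), and the comparison principle forces parabolicity via Lemma \ref{existenceOfNonConstantp-harmonic}. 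Your argument is more elementary and self-contained for that leg; the paper's buys the stronger intermediate statement that bounded annular capacity alone already yields the comparison principle for the whole class of $A$-operators. For $(a)\Rightarrow(b)$ your Caccioppoli argument with cutoffs $\eta_j$ of vanishing $p$-energy and a fixed collar cutoff $\theta$ is the same mechanism as the paper's Proposition \ref{LpBoundnessOfDerivativeOfSolutionMain} and Theorem \ref{GenericComparisonTheorem}, only with generic cutoffs in place of the $p$-harmonic annulus cutoffs $\eta_{R_1^k,R_2^k}$ and with the $\varepsilon$-shift placed on the set $\{v>w+\epsilon\}$ rather than on the test function; a side benefit is that your version genuinely treats sub- and supersolutions, which matches the paper's definition of the comparison principle, whereas the paper's theorem is stated for solutions. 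Your $(b)\Rightarrow(a)$ is the paper's argument (compare the capacitary potential with a constant).

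The one point you should not leave as a parenthetical is the nonconstancy of the capacitary potential $u=\lim u_k$ in $(b)\Rightarrow(a)$. Your justification ``nonconstant since $\int_M|\nabla u|^p=\mathrm{Cap}_p(\overline D)>0$'' hides a real issue: lower semicontinuity only gives $\int_M|\nabla u|^p\le\liminf\int_M|\nabla u_k|^p=\mathrm{Cap}_p(\overline D)$, and the reverse inequality (or any other proof that $u\not\equiv 1$) is exactly what requires work, since a priori the $u_k$ could converge to the constant $1$ while their Dirichlet energies stay bounded away from $0$. This is the content of the paper's Lemma \ref{existenceOfNonConstantp-harmonic}, whose proof needs the Wang--Zhang gradient estimate to show that $\nabla u_k\to 0$ boundedly on a fixed annulus if $u\equiv 1$, contradicting $\mathrm{cap}_p(U;W_k)\ge\mathrm{Cap}_p(U)>0$. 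Citing Holopainen for this is legitimate, but as written your sentence reads as if it were immediate; either import the statement explicitly as a known lemma or reproduce an argument of this type.
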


 The existence of many different conditions implying
$p-$parabolicity is well known and hence, where the $p-$comparison principle holds (see \cite{HP}, \cite{HMP}, \cite{T}, \cite{EP}, \cite{H1},
\cite{LT}, \cite{S}, \cite{H2} and also references therein). Among them,
there are several works giving conditions in terms of the volume growth of
geodesic balls. We prove here the following result:

\begin{theorem}
\label{PGGB}Let $M$ be a complete noncompact Riemannian manifold and $p>1$.
Suppose that, for a fixed point $o\in M$, there exist two increasing sequences
$r_{k}$ and $s_{k}$ such that $s_{k}>r_{k}\rightarrow+\infty$
and
\begin{equation}
\sup_{k}\left(  \frac{2}{s_{k}-r_{k}}\right)  ^{p}\operatorname*{Vol}%
(B(o,s_{k})\backslash B(o,r_{k}))<+\infty.\label{UpperBoundBallGrowth}%
\end{equation}
Then $M$ is $p-$parabolic.
\end{theorem}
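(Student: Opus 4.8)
The plan is to use the capacity characterization of $p$-parabolicity (condition (c) of Theorem~\ref{MT}, or equivalently the vanishing of $p$-capacity of compact sets from Theorem~\ref{gather}), and to produce explicit test functions from the volume growth hypothesis \eqref{UpperBoundBallGrowth}. So it suffices to exhibit, for a fixed ball $B(o,R)$, a sequence of admissible functions $v_k$ that are $1$ on $B(o,R)$ and compactly supported, with $\int_M|\nabla v_k|^p$ bounded uniformly in $k$; together with the Dichotomy of Corollary~\ref{Dichotomy} this forces $\mathrm{Cap}_p(B(o,R))$ to be controlled, and in fact combined with condition (c) of Theorem~\ref{MT} (take $R_1^k$ fixed large enough and $R_2^k=s_k$) it gives $p$-parabolicity. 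Actually the cleanest route is to verify condition (c) of Theorem~\ref{MT} directly: fix $o$, and for large $k$ set $R_1^k:=r_k$ (or a fixed radius once $r_k$ exceeds it) and $R_2^k:=s_k$; I must bound $\mathrm{cap}_p(B(o,R_1^k);B(o,R_2^k))$ uniformly.

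The key step is the choice of test function. Let $\rho(x)=\mathrm{dist}(o,x)$, which is Lipschitz with $|\nabla\rho|=1$ a.e. Define the piecewise-linear cutoff
\[
v_k(x)=
\begin{cases}
1, & \rho(x)\le r_k,\\[2pt]
\dfrac{s_k-\rho(x)}{s_k-r_k}, & r_k\le\rho(x)\le s_k,\\[2pt]
0, & \rho(x)\ge s_k.
\end{cases}
\]
Then $v_k$ is Lipschitz, equals $1$ on $B(o,r_k)$, is supported in $\overline{B(o,s_k)}$, and $|\nabla v_k|=\frac{1}{s_k-r_k}\mathbf 1_{B(o,s_k)\setminus B(o,r_k)}$ a.e. Hence
\[
\int_M|\nabla v_k|^p
=\left(\frac{1}{s_k-r_k}\right)^{p}\operatorname*{Vol}\bigl(B(o,s_k)\setminus B(o,r_k)\bigr)
\le \frac{1}{2^{p}}\sup_j\left(\frac{2}{s_j-r_j}\right)^{p}\operatorname*{Vol}\bigl(B(o,s_j)\setminus B(o,r_j)\bigr)<+\infty,
\]
uniformly in $k$. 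Since the capacity in \eqref{p-capacity} is defined via $C_c^\infty$ functions, a standard mollification/approximation argument replaces $v_k$ by a smooth $\tilde v_k\in\mathcal F_{B(o,r_k),B(o,s_k)}$ with essentially the same energy (up to an arbitrarily small additive error), so $\mathrm{cap}_p(B(o,r_k);B(o,s_k))$ is bounded uniformly in $k$. This is exactly condition (c) of Theorem~\ref{MT}, whence $M$ is $p$-parabolic.

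The main technical point to be careful about is the smoothing: the distance function $\rho$ is only Lipschitz (not smooth past the cut locus), and the admissible class $\mathcal F_{E,\Omega}$ in \eqref{AdmissibleLocally} requires $C_c^\infty$ functions with value exactly $1$ on $E$ and support inside $\Omega$. I would handle this in the routine way — first enlarge slightly to $B(o,r_k-\varepsilon)$ and shrink the support to $B(o,s_k+\varepsilon)\subset B(o,s_k')$ for a slightly larger $s_k'$, then mollify the Lipschitz function; the energy changes by $O(\varepsilon)$ plus the difference coming from replacing $s_k-r_k$ by a nearby value, all of which stays uniformly bounded. (Alternatively one can first prove the inequality $\mathrm{cap}_p\le \int|\nabla v|^p$ for Lipschitz admissible $v$ by the usual density argument, which is presumably available from the references cited for Theorem~\ref{gather}.) No step here is deep; the whole content is the explicit linear cutoff together with the volume bound \eqref{UpperBoundBallGrowth}, and the factor $2$ in the statement is just the harmless normalization absorbing the smoothing loss.
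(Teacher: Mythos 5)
Your proposal is correct and follows essentially the same route as the paper: the paper bounds $\mathrm{\;cap}_{p}\,(r_k,s_k,o)$ by $\left(\tfrac{2}{s_k-r_k}\right)^{p}\operatorname{Vol}(B(o,s_k)\backslash B(o,r_k))$ using exactly the mollified piecewise-linear cutoff of the distance function (Remark \ref{limitationOfPhiByVolume}, where the factor $2$ absorbs the smoothing loss, just as you note), and then invokes the equivalence (a)$\Leftrightarrow$(c) of Theorem \ref{MT}. No issues.
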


We note that condition \eqref{UpperBoundBallGrowth} is satisfied if there
exists some sequence $r_{k}\rightarrow+\infty$ such that
\[
\operatorname*{Vol}(B(o,r_{k}))\leq Cr_{k}^{q}\quad\mathrm{for\;any}%
\;k,
\]
where $q\leq p$ and $C>0$ is a constant that depends only on $M$.

\section{A general comparison result}

In this section, we prove that a comparison result holds for a  general class of equations which contains the $p-$Laplacian provided that the $p-$capacity of a sequence of annuli in the manifold has some decay. This section is of independent interest, but it will be important for the next one.  

We will study a class of second order elliptic PDE's of the form

\begin{equation}
{\rm div} \left( \frac{ A (|\nabla u|) }{|\nabla u|}  \nabla u \right) = 0 \quad {\rm in} \quad M \backslash K,
\label{exteriorProblem}
\end{equation}
where $K \subset M$ is a compact set and $A \in C[0,+\infty) \cap C^1(0,+\infty)$ satisfies, for some $\alpha \in (0,1]$ and $\beta \ge 1$, the following conditions:
\begin{align}
 \bullet & \; A(0)=0 \text{ and }  A(s) > 0 \; \text{for } s > 0;  \label{AdeZeroeZero} \\
 \bullet & \; s A'(s) \ge \alpha A(s)  \; \text{for } s \ge 0; \label{AprimeHasSomeGrowth} \\
 \bullet & \; s A'(s) \le \beta A(s)  \; \text{for } s \ge 0; \label{AprimeHasSomeGrowthBounded} \\
  & \text{There exist }D_2  \text{ and } D_1  \; \text{positive such that} \nonumber \\
 \bullet & \; A(s) \ge D_1 \, s^{p-1} \; \text{for any } s \ge 0; \label{LowerBoundForA} \\
 \bullet & \; A(s) \le D_2 \, s^{p-1} \; \text{for any } s \ge 0. \label{UpperBoundForA}
\end{align}

Related to the operator in \eqref{exteriorProblem}, define $\mathcal{A}:\mathbb{R}^n \to \mathbb{R}^n$ by 
$$ \mathcal{A}(0) = 0 \quad \text{and} \quad \mathcal{A}(\vetor{v})= \frac{A(|\vetor{v}|)}{|\vetor{v}|} \vetor{v} \quad \text{for } \; \vetor{v} \ne 0.$$
Hence equation \eqref{exteriorProblem} is equivalent to 
$$ {\rm div} \mathcal{A}(\nabla u ) = 0 \quad \text{in} \quad M\backslash K.$$
Remind that $u \in C^1(M \backslash K)$ is said to be a weak solution to this equation if 
\begin{equation*}
  \int_{M \backslash K} \nabla \varphi \cdot \mathcal{A}(\nabla u ) \, dx = 0
\end{equation*} 
for any $\varphi \in C_c^{1} (M \backslash K)$. By an approximating argument, this holds for any $\varphi \in W_0^{1,p} (M \backslash K)$.

Observe that conditions \eqref{AprimeHasSomeGrowth} and \eqref{LowerBoundForA}
imply that
\begin{equation}
 \sum_{i,j=1}^{n} \frac{\partial \mathcal{A}_j(\vetor{v})}{\partial \vetor{v}_i} \xi_i \xi_j \ge  D_1 \alpha|\vetor{v}|^{p-2} |\xi|^2  
\end{equation}
and, from \eqref{AprimeHasSomeGrowthBounded} and \eqref{UpperBoundForA}, we have that 
\begin{equation}
 \sum_{i,j=1}^{n} \left| \frac{\partial \mathcal{A}_j(\vetor{v})}{\partial \vetor{v}_i} \right| \le  n^2 D_2  (\beta + 2) |\vetor{v}|^{p-2} 
\end{equation}
for any $\xi \in \mathbb{R}^n$ and $\vetor{v}  \in \mathbb{R}^n \backslash \{0\}$, where $\mathcal{A}_j$ is the $j$th coordinate functions  of $\mathcal{A}$.
Moreover, $\mathcal{A} \in C(\mathbb{R}^n ; \mathbb{R}^n) \cap C^1(\mathbb{R}^n \backslash \{0\}; \mathbb{R}^n)$ and $\mathcal{A}(0)=0$.
Hence, we have the following result, proved in \cite{D} (Lemma 2.1):

\begin{lemma}
There exist positive constants $c_1 = c_1(n,p,\alpha)$ and $c_2=c_2(n,p,\beta)$ such that
\begin{align} 
(\mathcal{A}(\vetor{v}_2) - \mathcal{A}(\vetor{v}_1)) \cdot (\vetor{v}_2 - \vetor{v}_1) &\ge c_1 (|\vetor{v}_2| + |\vetor{v}_1|)^{p-2} |\vetor{v}_2 - \vetor{v}_1|^2 
\label{GenericInequalityFromBelowForA}
\\[5pt] 
 |\mathcal{A}(\vetor{v}_2) - \mathcal{A}(\vetor{v}_1) | &\le c_2 (|\vetor{v}_2| + |\vetor{v}_1|)^{p-2} |\vetor{v}_2 - \vetor{v}_1| 
\label{GenericInequalityFromAboveForA}
\end{align}
for $\vetor{v}_1, \vetor{v}_2 \in \mathbb{R}^n$ that satisfy $|\vetor{v}_1 | + |\vetor{v}_2| \ne 0$. If $p \ge 2$, then
\begin{equation} 
(\mathcal{A}(\vetor{v}_2) - \mathcal{A}(\vetor{v}_1)) \cdot (\vetor{v}_2 - \vetor{v}_1) \ge c_1 |\vetor{v}_2 - \vetor{v}_1|^p
\label{CoercivityForMathcalAForP2} 
\end{equation}
for any $\vetor{v}_1, \vetor{v}_2 \in \mathbb{R}^n$. If $1 < p \le 2$, then 
\begin{equation} 
|\mathcal{A}(\vetor{v}_2) - \mathcal{A}(\vetor{v}_1)| \le c_2 |\vetor{v}_2 - \vetor{v}_1|^{p-1}
\label{BoundednessForMathcalAForP12} 
\end{equation}
for $\vetor{v}_1, \vetor{v}_2 \in \mathbb{R}^n$.
\label{InequalitiesForCalA}
\end{lemma}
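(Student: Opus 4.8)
The plan is to derive all four inequalities from the two bounds on the Jacobian of $\mathcal{A}$ displayed just above the statement, using the classical device of writing $\mathcal{A}(\vetor{v}_2)-\mathcal{A}(\vetor{v}_1)$ as a line integral of $D\mathcal{A}$. Fix $\vetor{v}_1,\vetor{v}_2\in\mathbb{R}^n$ with $|\vetor{v}_1|+|\vetor{v}_2|\neq 0$; the cases $\vetor{v}_1=\vetor{v}_2$ or one of them zero are trivial, so assume $\vetor{v}_1\neq\vetor{v}_2$ and both nonzero, and set $\vetor{v}_t=\vetor{v}_1+t(\vetor{v}_2-\vetor{v}_1)$ for $t\in[0,1]$. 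The segment $t\mapsto\vetor{v}_t$ meets the origin for at most one value $t_0$, near which $|\vetor{v}_t|$ is comparable to $|t-t_0|\,|\vetor{v}_2-\vetor{v}_1|$, so that $t\mapsto|\vetor{v}_t|^{p-2}$ is integrable on $[0,1]$ precisely because $p>1$. Since $\mathcal{A}\in C(\mathbb{R}^n;\mathbb{R}^n)\cap C^1(\mathbb{R}^n\backslash\{0\};\mathbb{R}^n)$, the map $t\mapsto\mathcal{A}(\vetor{v}_t)$ is absolutely continuous, whence
\[
\mathcal{A}(\vetor{v}_2)-\mathcal{A}(\vetor{v}_1)=\int_0^1 D\mathcal{A}(\vetor{v}_t)\,[\vetor{v}_2-\vetor{v}_1]\,dt .
\]

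Next I would pair this identity with $\vetor{v}_2-\vetor{v}_1$ and insert the ellipticity bound $\sum_{i,j}\frac{\partial\mathcal{A}_j}{\partial\vetor{v}_i}(\vetor{v})\,\xi_i\xi_j\ge D_1\alpha|\vetor{v}|^{p-2}|\xi|^2$ to get
\[
(\mathcal{A}(\vetor{v}_2)-\mathcal{A}(\vetor{v}_1))\cdot(\vetor{v}_2-\vetor{v}_1)\ \ge\ D_1\alpha\,|\vetor{v}_2-\vetor{v}_1|^2\int_0^1|\vetor{v}_t|^{p-2}\,dt,
\]
and, taking norms and using $\sum_{i,j}\bigl|\frac{\partial\mathcal{A}_j}{\partial\vetor{v}_i}(\vetor{v})\bigr|\le n^2D_2(\beta+2)|\vetor{v}|^{p-2}$, the matching estimate
\[
|\mathcal{A}(\vetor{v}_2)-\mathcal{A}(\vetor{v}_1)|\ \le\ n^2D_2(\beta+2)\,|\vetor{v}_2-\vetor{v}_1|\int_0^1|\vetor{v}_t|^{p-2}\,dt .
\]
Everything then reduces to the elementary two-sided estimate $c(p)\,(|\vetor{v}_1|+|\vetor{v}_2|)^{p-2}\le\int_0^1|\vetor{v}_t|^{p-2}\,dt\le C(p)\,(|\vetor{v}_1|+|\vetor{v}_2|)^{p-2}$ with constants depending only on $p$. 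For $p\ge 2$ the upper bound is immediate from $|\vetor{v}_t|\le|\vetor{v}_1|+|\vetor{v}_2|$, while for the lower bound I would assume $|\vetor{v}_2|\ge|\vetor{v}_1|$ and use $|\vetor{v}_t|\ge(2t-1)|\vetor{v}_2|$ on $[\tfrac12,1]$, which gives $\int_0^1|\vetor{v}_t|^{p-2}\,dt\ge\frac{|\vetor{v}_2|^{p-2}}{2(p-1)}\ge\frac{(|\vetor{v}_1|+|\vetor{v}_2|)^{p-2}}{2^{p-1}(p-1)}$. For $1<p\le 2$ the exponent $p-2$ is non-positive, so the lower bound is trivial from $|\vetor{v}_t|\le|\vetor{v}_1|+|\vetor{v}_2|$, and only the upper bound is delicate: since $|\vetor{v}_t|$ is, for fixed $|\vetor{v}_1|,|\vetor{v}_2|$, pointwise smallest in the antiparallel configuration, the integral is largest there, and for $\vetor{v}_1=a\omega$, $\vetor{v}_2=-b\omega$ one computes $\int_0^1|\vetor{v}_t|^{p-2}\,dt=\frac{a^{p-1}+b^{p-1}}{(a+b)(p-1)}\le\frac{2}{p-1}(a+b)^{p-2}$, again finite exactly because $p>1$.

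Finally I would combine the pieces: substituting the two-sided integral estimate into the last two displays gives \eqref{GenericInequalityFromBelowForA} and \eqref{GenericInequalityFromAboveForA} with $c_1=c_1(n,p,\alpha)$ and $c_2=c_2(n,p,\beta)$. Then, since $|\vetor{v}_2-\vetor{v}_1|\le|\vetor{v}_1|+|\vetor{v}_2|$, for $p\ge 2$ we have $(|\vetor{v}_1|+|\vetor{v}_2|)^{p-2}\ge|\vetor{v}_2-\vetor{v}_1|^{p-2}$, which upgrades \eqref{GenericInequalityFromBelowForA} to \eqref{CoercivityForMathcalAForP2}; and for $1<p\le 2$ we have $(|\vetor{v}_1|+|\vetor{v}_2|)^{p-2}\le|\vetor{v}_2-\vetor{v}_1|^{p-2}$, which upgrades \eqref{GenericInequalityFromAboveForA} to \eqref{BoundednessForMathcalAForP12}. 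The main obstacle I expect is making the line-integral representation rigorous when the segment passes through the origin (where $\mathcal{A}$ is merely continuous), together with pinning down the sharp direction of the elementary integral estimate in each range of $p$; once those are settled, the remainder is routine bookkeeping with the structure conditions \eqref{AdeZeroeZero}--\eqref{UpperBoundForA} on $A$.
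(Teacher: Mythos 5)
Your argument is correct. Note, however, that the paper does not prove this lemma at all: it simply cites Damascelli (reference \cite{D}, Lemma 2.1), and your proof — the line-integral representation $\mathcal{A}(\vetor{v}_2)-\mathcal{A}(\vetor{v}_1)=\int_0^1 D\mathcal{A}(\vetor{v}_t)[\vetor{v}_2-\vetor{v}_1]\,dt$ combined with the two-sided estimate $c(p)(|\vetor{v}_1|+|\vetor{v}_2|)^{p-2}\le\int_0^1|\vetor{v}_t|^{p-2}\,dt\le C(p)(|\vetor{v}_1|+|\vetor{v}_2|)^{p-2}$ and the elementary comparison of $(|\vetor{v}_1|+|\vetor{v}_2|)^{p-2}$ with $|\vetor{v}_2-\vetor{v}_1|^{p-2}$ according to the sign of $p-2$ — is essentially the classical proof given in that reference, with all the delicate points (integrability of $|\vetor{v}_t|^{p-2}$ across the origin for $p>1$, the antiparallel worst case for $1<p<2$) handled correctly.
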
 

We can also conclude directly, from \eqref{UpperBoundForA} and \eqref{LowerBoundForA}, that
\begin{equation}
| \mathcal{A}(\vetor{v})| \le D_2 |\vetor{v}|^{p-1} \quad {\rm and} \quad | \mathcal{A}(\vetor{v})| \ge D_1 |\vetor{v}|^{p-1} \quad \text{for } \vetor{v} \in \mathbb{R}^n.
\label{SimpleInequalitiesForMathCalA}
\end{equation}
 We need the following result:
\begin{lemma}
Let $K$ be a compact set of $M$ and $o\in M$ be a fixed point. Suppose that
$U$ and $V$ are bounded domains of $M$ that satisfy $K\subset V\subset\subset
U$. For $R_{2}>R_{1}>0$ such that the ball $B(o,R_{1})$ contains $\overline
{U}$, there exists a function $\eta=\eta_{R_{1},R_{2}}\in W_{0}^{1,p}%
(M\backslash K)$ such that 
\newline$\bullet$ $0\leq\eta\leq1$; 
\newline%
$\bullet$ $\eta(x)=1$ for $x\in B(o,R_{1})\backslash U$; 
\newline$\bullet$
$\eta(x)=0$ if $x\in V$ or $d(x,o)\geq R_{2}$; 
\newline$\bullet$
$|D\eta(x)|\leq m$ for $x\in U\backslash V$, where $m$ is a constant that
depends on $U$, $V$ and $M$, but not on $R_{1}$ and $R_{2}$; 
\newline$\bullet$
$\Delta_{p}\eta(x)=0$ if $R_{1}<d(x,o)<R_{2}$ in the weak sense. 
\label{etaDefinition}
\end{lemma}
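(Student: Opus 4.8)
The plan is to write $\eta$ as an explicit algebraic combination of three functions, so as to avoid any gluing of Sobolev functions across a (possibly non-smooth) geodesic sphere. Put $\mathcal{N}:=\{x\in M:R_{1}<d(x,o)<R_{2}\}=B(o,R_{2})\setminus\overline{B(o,R_{1})}$. I would look for $\eta$ of the form
\[
\eta:=\phi\,H+\widetilde{w},
\]
where $\phi$ is a fixed cutoff depending only on $U,V,M$; $H$ is an explicit Lipschitz function with $H\equiv1$ on $\overline{B(o,R_{1})}$ and $H\equiv0$ on $M\setminus B(o,R_{2})$; and $\widetilde{w}\in W^{1,p}(M)$ is the extension by zero of $w:=u-H$, with $u$ the $p$-capacity potential of $\mathcal{N}$ carrying boundary value $1$ on the inner sphere and $0$ on the outer sphere. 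The point of this form is that, since $\overline{U}\subset B(o,R_{1})$, the annulus $\mathcal{N}$ is disjoint from $U$; hence on $B(o,R_{1})$ one has $H\equiv1$ and $\widetilde{w}\equiv0$, so $\eta=\phi$ there; on $\mathcal{N}$ one has $\phi\equiv1$ (as $\mathcal{N}\subset M\setminus\overline{U}$), so $\eta=u$ there; and on $M\setminus B(o,R_{2})$ one has $H\equiv0$ and $\widetilde{w}\equiv0$, so $\eta=0$ there. This is exactly the prescribed profile, and $\eta\in W^{1,p}(M)$ for free, being the sum of a (compactly supported) product of Lipschitz functions and a $W^{1,p}$ function.

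First I would construct $\phi$. Since $V\subset\subset U$ and $K\subset V$, the compact set $\overline{V}$ is disjoint from the closed set $M\setminus U$, so $\delta:=\operatorname{dist}(\overline{V},M\setminus U)>0$, and I set $\phi(x):=\min\{1,\delta^{-1}\operatorname{dist}(x,\overline{V})\}$. This $\phi$ is Lipschitz, $0\le\phi\le1$, equals $0$ on the open neighbourhood $V$ of $K$, equals $1$ on $M\setminus U$ (in particular on $M\setminus\overline{U}\supset\overline{\mathcal{N}}$), and has $|D\phi|\le m:=1/\delta$ a.e., with $m$ depending only on $U,V,M$.

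Next I would produce $u$. Let $H(x):=\max\{0,\min\{1,(R_{2}-d(x,o))/(R_{2}-R_{1})\}\}$, which is Lipschitz on $M$ with $0\le H\le1$ and the stated behaviour on $\overline{B(o,R_{1})}$ and $M\setminus B(o,R_{2})$. Minimising $v\mapsto\int_{\mathcal{N}}|\nabla v|^{p}$ over the affine class $\{v\in W^{1,p}(\mathcal{N}):v-H\in W_{0}^{1,p}(\mathcal{N})\}$ yields a minimiser by the direct method (coercivity via Poincar\'e on the bounded set $\mathcal{N}$, together with weak lower semicontinuity of the convex energy); replacing it by $\max\{0,\min\{v,1\}\}$, which stays in the class because $0\le H\le1$ and does not increase the energy, I may assume $0\le u\le1$. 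Its Euler--Lagrange equation says exactly that $\Delta_{p}u=0$ in the weak sense in $\mathcal{N}$. Finally, $w:=u-H\in W_{0}^{1,p}(\mathcal{N})$, and its zero-extension $\widetilde{w}$ lies in $W^{1,p}(M)$; no boundary regularity of $\mathcal{N}$ is needed for this last step.

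It then remains to read off the five bullets from the three regimes: on $B(o,R_{1})$ one has $\eta=\phi$, so $0\le\eta\le1$ there, $\eta=1$ on $B(o,R_{1})\setminus U$, $\eta=0$ on $V$, and $|D\eta|=|D\phi|\le m$ on $U\setminus V\subset B(o,R_{1})$; on $\mathcal{N}$ one has $\eta=u\in[0,1]$, which is weakly $p$-harmonic; on $M\setminus B(o,R_{2})$ one has $\eta=0$; and $\eta$ has compact support (inside $\overline{B(o,R_{2})}$) and vanishes near $K$, so $\eta\in W_{0}^{1,p}(M\setminus K)$. I do not expect a genuine obstacle: the only nonelementary inputs are the direct-method existence of the energy minimiser, the standard fact that truncating a Sobolev function to an interval preserves an affine $W_{0}^{1,p}$ boundary condition when the datum lies in that interval, and the regularity-free fact that zero-extension maps $W_{0}^{1,p}(\mathcal{N})$ into $W^{1,p}(M)$. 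The one design choice worth stressing is multiplying $\phi$ by $H$ rather than patching $\phi$ and $u$ along $\{d(\cdot,o)=R_{1}\}$: this is precisely what sidesteps the regularity of geodesic spheres, and it is also why the bound $m$ is independent of $R_{1},R_{2}$, since on $U\setminus V$ — which sits well inside $B(o,R_{1})$ — one has $H\equiv1$ and $\widetilde{w}\equiv0$, so $\eta$ there is just the fixed function $\phi$.
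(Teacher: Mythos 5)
Your construction is correct and follows essentially the same route as the paper: the annular part of $\eta$ is the $p$-harmonic capacity potential produced by the direct method, combined with a fixed cutoff adapted to $V\subset\subset U$ whose gradient bound is independent of $R_{1},R_{2}$. The only difference is one of packaging: the paper defines $\eta$ piecewise (the pieces agree, being identically $1$, on a neighbourhood of $\partial U$, and the annular minimizer is taken in the constrained class of $W_{0}^{1,p}(B(o,R_{2}))$ functions equal to $1$ on $B(o,R_{1})$), while you assemble $\eta$ as the algebraic combination $\phi H+\widetilde{w}$, which makes the Sobolev regularity across the geodesic spheres slightly more transparent but does not change the substance.
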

\begin{proof} First minimize the functional 
$$J(v):= \int_{B(o,R_2)} |\nabla v|^p dx$$ 
in the convex set
$\{ v \in W_0^{1,p}(B(o,R_2)) \; : \: v =1 \; {\rm in} \; B(o,R_1) \}$.
From the classical theory, there exists a unique minimizer $v_0$ in this convex set
such that $v_0=1$ in $B(o,R_1)$ and $\Delta_p v_0 = 0$ in $B(o,R_2) \backslash B(o,R_1)$ in the weak sense.
Now, for some neighborhood $V_{\varepsilon}$ of $V$ such that $V \subset \subset V_{\varepsilon} \subset \subset U$, let 
$v_1 \in C^{\infty}(U)$ satisfying $0 \le v_1 \le 1$ in $U$, $v_1=0$ in $V$ and $v_1=1$ in $U \backslash V_{\varepsilon}$. Defining $\eta$ by $\eta := v_1$ in $U$, $\eta := v_0$ in $B(o,R_2) \backslash U$ and $\eta:=0$ in $M\backslash B(o,R_2)$, it follows the result.
\end{proof}

Now we fix some notation before stating the main result. For a fixed point $o
\in M$, we denote the capacity of $B(o,R_{1})$ with respect to $B(o,R_{2})$,
for $R_{2} > R_{1}$, by
\[
\mathrm{\; cap}_{p} \, (R_{1},R_{2},o) := \mathrm{\; cap}_{p} \,
(B(o,R_{1});B(o,R_{2})).
\]
If it is clear that the center of the balls is $o$, we simply denote by
$\mathrm{\; cap}_{p} \, (R_{1},R_{2})$.

The annulus $B(o,R_{2}) \backslash\overline{B(o,R_{1})}$, centered at $o$,
is denoted by
\[
\mathpzc{A}_{R_{1},R_{2}}=\mathpzc{A}_{R_{1},R_{2},o} := B(o,R_{2})
\backslash\overline{B(o,R_{1})} \quad\text{for} \quad R_{2} > R_{1}%
\]
and the oscillation of a function $v$ in this annulus is defined by
\begin{equation}
\mathop{\rm osc}_{\mathpzc{A}_{R_{1},R_{2}}} v := \sup_{\mathpzc{A}_{R_{1}%
,R_{2}}} v - \inf_{\mathpzc{A}_{R_{1},R_{2}}} v.\label{oscillationDefinition}%
\end{equation}

The next proposition is a variant of the result that the derivatives of global solutions are in $L^p$ if
the manifold is $p$-parabolic.   

\begin{proposition}
Let $u \in C^{1}(M\backslash K)$ be a weak solution of \eqref{exteriorProblem}
in $M\backslash K$. Suppose that $A$ satisfies \eqref{AdeZeroeZero} -
\eqref{UpperBoundForA} and $U \subset M $ is an open bounded set such that $K
\subset U $. \newline(a) If $u$ is bounded and
\[
\sup_{k} \; \left(\osc{\mathpzc{A}_{R_1^k, R_2^k}} u\right)^p \cdot \capa (R_1^k,R_2^k) < +\infty, \label{GrowthCondOnDirichletIntegral}%
\]
where $B(o,R_{1}^{k})$ and $B(o,R_{2}^{k})$ are increasing sequences of balls
that contain $U$ and such that $R_{2}^{k} > R_{1}^{k} \to\infty$, then $|Du|
\in L^{p}(M \backslash U)$ and there exist positive constants $C_{1}$ and
$C_{2}$, such that
\[
\| Du\|_{L^{p}( M\backslash U)} \le C_{2} + C_{1} \limsup_{k \to\infty} \;
\left( \mathop{\rm osc}_{\mathpzc{A}_{R_{1}^{k}, R_{2}^{k}}} u\right) ^{p}
\cdot\mathrm{\; cap}_{p} \, (R_{1}^{k},R_{2}^{k}).
\]
(b) In the case that $u$ is not necessarily bounded, if we assume
\[
\sup_{k} \quad\left( \max_{\mathpzc{A}_{R_{1}^{k}, R_{2}^{k}}} |u|\right) ^{p}
\cdot\mathrm{\; cap}_{p} \, (R_{1}^{k},R_{2}^{k}) < +\infty,
\]
then $|Du| \in L^{p}(M \backslash U)$ and
\[
\| Du\|_{L^{p}( M\backslash U)} \le C_{2} + C_{1} \limsup_{k \to\infty} \;
\left( \max_{\mathpzc{A}_{R_{1}^{k}, R_{2}^{k}}} |u|\right) ^{p}
\cdot\mathrm{\; cap}_{p} \, (R_{1}^{k},R_{2}^{k}).
\]
(In both cases, $C_{1}$ depends on $p$, $c_{1}$, $D_{2}$ and $C_{2}$ depends
on $p$, $c_{1}$, $D_{2}$, $K$, $U$, and $u$.)
\label{LpBoundnessOfDerivativeOfSolutionMain}
\end{proposition}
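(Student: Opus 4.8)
The plan is to use the cut-off function $\eta = \eta_{R_1^k,R_2^k}$ from Lemma~\ref{etaDefinition} as a device to localize the weak formulation of the equation, and then to exploit the ellipticity inequality \eqref{GenericInequalityFromBelowForA}. First I would reduce (a) to (b): since $u$ is bounded, replacing $u$ by $u - c$ for a suitable constant $c$ does not change $\nabla u$ nor the fact that $u$ solves \eqref{exteriorProblem}, and one can choose $c$ so that on each annulus $\mathpzc{A}_{R_1^k,R_2^k}$ one has $\max |u - c| \le \osc u$ — actually, since the constant would have to be the same for all $k$, instead I would argue directly: work with the test function $\varphi_k := \eta_k^p (u - u(x_0))$ truncated appropriately, or more cleanly test with $\varphi_k = \eta_k^p (u - \lambda_k)$ where $\lambda_k$ is the value of $u$ at a fixed point of $\overline{U}$, and absorb the difference using that $|u-\lambda_k| \le \max_{\mathpzc{A}_{R_1^k,R_2^k}}|u - \lambda_k|$ on the annulus, which in case (a) is controlled by $\osc_{\mathpzc{A}_{R_1^k,R_2^k}} u$ plus a fixed constant depending on $U$ and $u$ (the oscillation of $u$ on the fixed compact set between $\partial U$ and $\partial B(o,R_1^k)$ is absorbed into $C_2$). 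So the two cases are handled by the same computation, with $M_k := \osc_{\mathpzc{A}_{R_1^k,R_2^k}} u$ in case (a) and $M_k := \max_{\mathpzc{A}_{R_1^k,R_2^k}}|u|$ in case (b), up to additive constants going into $C_2$.

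The core computation: set $w := u - \lambda$ (a fixed shift) and test the weak equation on $M\backslash K$ with $\varphi_k = \eta_k^p w$, which lies in $W_0^{1,p}(M\backslash K)$ since $\eta_k$ vanishes on $V$ and outside $B(o,R_2^k)$ and $w \in C^1$. Expanding $\nabla\varphi_k = \eta_k^p \nabla w + p\eta_k^{p-1} w \,\nabla\eta_k$ and using $\mathcal{A}(\nabla u) = \mathcal{A}(\nabla w)$ gives
\[
\int_{M\backslash K} \eta_k^p \,\nabla w \cdot \mathcal{A}(\nabla w)\, dx = -p\int_{M\backslash K} \eta_k^{p-1} w \,\nabla\eta_k \cdot \mathcal{A}(\nabla w)\, dx.
\]
By \eqref{SimpleInequalitiesForMathCalA}, $\nabla w \cdot \mathcal{A}(\nabla w) \ge D_1|\nabla w|^p$ (taking $\vetor{v}_1 = 0$ in \eqref{GenericInequalityFromBelowForA} one even gets a cleaner constant, but \eqref{SimpleInequalitiesForMathCalA} suffices), and $|\mathcal{A}(\nabla w)| \le D_2|\nabla w|^{p-1}$, so the right side is bounded by $pD_2 \int \eta_k^{p-1}|w|\,|\nabla\eta_k|\,|\nabla w|^{p-1}\,dx$. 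Applying Young's inequality $ab \le \varepsilon a^{p} + C_\varepsilon b^{p}$ with $a = \eta_k^{(p-1)/p}|\nabla w|^{p-1}\cdot(\text{exponent bookkeeping})$ to split off a small multiple of $\int \eta_k^p |\nabla w|^p$ that gets absorbed into the left side, we are left with
\[
\int_{M\backslash K} \eta_k^p |\nabla w|^p \,dx \le C \int_{M\backslash K} |w|^p |\nabla\eta_k|^p \,dx.
\]
Now split the right-hand integral over the two regions where $\nabla\eta_k\ne 0$: on $U\backslash V$, where $|\nabla\eta_k|\le m$ (independent of $k$) and $|w|$ is bounded by a fixed constant, giving a uniformly bounded contribution (this is part of $C_2^p$); and on the annulus $\mathpzc{A}_{R_1^k,R_2^k}$, where $|w| \le M_k + (\text{fixed const})$ and $\eta_k$ is $p$-harmonic, so $\int_{\mathpzc{A}_{R_1^k,R_2^k}}|\nabla\eta_k|^p dx = \mathrm{cap}_p(R_1^k,R_2^k)$ by the variational characterization of capacity recalled before the proposition (here one uses that the $p$-harmonic $\eta_k$ with boundary values $1$ and $0$ is precisely the capacity minimizer, so its Dirichlet integral on the annulus equals $\mathrm{cap}_p(R_1^k,R_2^k)$). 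This yields
\[
\int_{M\backslash K} \eta_k^p |\nabla w|^p\, dx \le C_2^p + C\,(M_k + c_0)^p\, \mathrm{cap}_p(R_1^k,R_2^k).
\]
Finally, since $0\le\eta_k\le 1$ and $\eta_k \equiv 1$ on $B(o,R_1^k)\backslash U$, and $B(o,R_1^k)\nearrow M$, the monotone/Fatou convergence theorem on the left — $\eta_k^p|\nabla w|^p \nearrow \chi_{M\backslash U}|\nabla u|^p$ pointwise as $k\to\infty$ — together with taking $\limsup_k$ on the right gives $|Du| \in L^p(M\backslash U)$ with the stated bound (adjusting $C_1, C_2$ to absorb $c_0$, $C$, $D_1$, $D_2$, $p$, $c_1$; note $(M_k+c_0)^p \le 2^{p-1}(M_k^p + c_0^p)$ so the constant $c_0^p$ folds into $C_2$).

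The main obstacle I anticipate is the bookkeeping in the Young's-inequality absorption step — one must be careful that the term absorbed into the left side genuinely has the same structure $\int \eta_k^p|\nabla w|^p$, which works because the right-hand integrand is $\eta_k^{p-1}|w||\nabla\eta_k|\cdot|\nabla w|^{p-1}$ and splitting $|\nabla w|^{p-1} = (\eta_k^p|\nabla w|^p)^{(p-1)/p}\cdot \eta_k^{-(p-1)}$ lets us apply Young with exponents $p$ and $p/(p-1)$ so that the $\eta_k$ powers match up (the $\eta_k^{-(p-1)}$ cancels against the explicit $\eta_k^{p-1}$). A secondary point requiring care is justifying that $\varphi_k\in W_0^{1,p}(M\backslash K)$ and that the identity $\int_{\mathpzc{A}_{R_1^k,R_2^k}}|\nabla\eta_k|^p = \mathrm{cap}_p(R_1^k,R_2^k)$ holds — the latter is immediate from Lemma~\ref{etaDefinition} (where $\eta_k$ is built from the capacity minimizer $v_0$) and the definition \eqref{p-capacity}–\eqref{AdmissibleLocally} of $\mathrm{cap}_p$, since the $p$-harmonic function equal to $1$ on $\partial B(o,R_1^k)$ and $0$ on $\partial B(o,R_2^k)$ realizes the infimum. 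Everything else is routine.
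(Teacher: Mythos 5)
Your strategy --- testing the weak formulation with $\eta_k^p$ times a shifted copy of $u$, invoking \eqref{SimpleInequalitiesForMathCalA}, absorbing the gradient term by Young/H\"older, and identifying $\int_{\mathpzc{A}_{R_1^k,R_2^k}}|\nabla\eta_k|^p$ with $\capa(R_1^k,R_2^k)$ --- is exactly the paper's, and the absorption bookkeeping, the membership $\varphi_k\in W_0^{1,p}(M\backslash K)$, and the Fatou step at the end are all fine. The genuine gap is in your choice of shift for part (a): you take $\lambda$ to be a \emph{fixed} constant (the value of $u$ at a point of $\overline U$), so on the annulus you only get $|u-\lambda|\le \osc{\mathpzc{A}_{R_1^k,R_2^k}}u + c_0$ with $c_0\sim \|u\|_\infty$, and your final bound contains the term $c_0^p\,\capa(R_1^k,R_2^k)$. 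You claim this ``folds into $C_2$''; it does not, because it is multiplied by the capacity, and hypothesis (a) bounds only the product $(\osc{\mathpzc{A}_{R_1^k,R_2^k}}u)^p\capa(R_1^k,R_2^k)$, not $\capa(R_1^k,R_2^k)$ itself. Concretely, on $\mathbb{R}^n$ with $n\ge 3$, $p=2$, $u=|x|^{2-n}$ and $R_2^k=2R_1^k$, one has $(\osc{\mathpzc{A}_{R_1^k,R_2^k}}u)^2\,\capa(R_1^k,R_2^k)\to 0$ while $\capa(R_1^k,R_2^k)\sim (R_1^k)^{n-2}\to\infty$, so your estimate degenerates even though the conclusion ($|Du|\in L^2$) holds.

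The fix is the one the paper uses: let the shift depend on $k$, namely $I_k=\inf_{\mathpzc{A}_{R_1^k,R_2^k}}u$, and test with $\eta_k^p(u-I_k)$. Then on the annulus $|u-I_k|\le\osc{\mathpzc{A}_{R_1^k,R_2^k}}u$ with no additive constant, and the only additive constant appears on the fixed region $U\setminus V$, where it is multiplied by $m^p\,\mathrm{Vol}(U\setminus V)$ rather than by the capacity and genuinely goes into $C_2$. (In part (b) you should simply take no shift at all, $\varphi_k=\eta_k^p u$, for the same reason: an extra $|\lambda|^p\capa(R_1^k,R_2^k)$ would again be uncontrolled.) Your aside about the oscillation of $u$ ``on the compact set between $\partial U$ and $\partial B(o,R_1^k)$'' is off target: $\nabla\eta_k=0$ on that region, so it contributes nothing to $\int |w|^p|\nabla\eta_k|^p$; the danger sits entirely in the annulus term.
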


\begin{proof}
{\bf (a)} Let $V$ be an open set and $R_0 >0$ s.t.
$K \subset V \subset \subset U \subset \subset B(o,R_0).$
For $R_1 > R_0$ and $R_2 > R_0$, consider the function $\eta=\eta_{R_1,R_2}$ as in Lemma \ref{etaDefinition} associated to $V$, $U$, $R_1$ and $R_2$.
Note that the function
$$\varphi := \eta^p (u - I), \quad {\rm where} \quad I = \inf_{\mathpzc{A}_{R_1, R_2}} u,$$
belongs to $W_0^{1,p} (M \backslash K)$.
Hence, using that
$u$ is a weak solution of \eqref{exteriorProblem},
we obtain
$$\int_F (u-I) \nabla \eta^p \cdot \mathcal{A}(\nabla u ) \, dx +
\int_F \eta^p \, \nabla u \cdot \mathcal{A}(\nabla u) \, dx = 0, $$
where $F \subset B(o,R_2) \backslash V$
is the compact support of $\eta$.
Hence,
\eqref{GenericInequalityFromBelowForA} and \eqref{SimpleInequalitiesForMathCalA}
imply that
\begin{align*}
c_1 \int_F \eta^p |\nabla u|^p dx &\le
\int_F \eta^p \nabla u \cdot \mathcal{A}(\nabla u) dx \\[5pt]
& = - \int_F (u-I) \nabla \eta^p \cdot \mathcal{A}(\nabla u ) dx\\[5pt]
& \le \int_F p |u-I| \eta^{p-1}|\nabla \eta| | \mathcal{A}(\nabla u ) | dx \\[5pt]
& \le p \, D_2  \int_F |u-I| \eta^{p-1}|\nabla \eta| |\nabla u|^{p-1} dx .
\end{align*}
From H\"older inequality,
\begin{equation}
c_1 \int_F \eta^p |\nabla u|^p dx  \le  p \, D_2
\left(\int_F \eta^{p} |\nabla u|^{p} dx\right)^{\frac{p-1}{p}}
\left( \int_F |u-I|^p |\nabla \eta|^p dx \right)^{\frac{1}{p}},
\label{0thLpBoundOnU}
\end{equation}
that is,
\begin{equation}
\int_F \eta^p |\nabla u|^p dx  \le
\left(\frac{p \, D_2 }{c_1} \right)^p  \int_F |u-I|^p |\nabla \eta|^p dx .
\label{firstLpBoundOnU}
\end{equation}
Using the hypotheses on the derivative of $\eta$,
we obtain
\begin{align}
\int_F |u-I|^p |\nabla \eta|^p dx &=
\int_{U\backslash V} |u-I|^p |\nabla \eta|^p dx +
\int_{\mathpzc{A}_{R_1,R_2}}  |u-I|^p |\nabla \eta|^p dx \nonumber \\[5pt]
& \le 2^pS_0^p \; m^p \, \text{Vol} (U\backslash V ) \,+\, \left(\osc{\mathpzc{A}_{R_1, R_2}} u \right)^p \; \int_{\mathpzc{A}_{R_1,R_2}} |\nabla \eta|^p dx,
\label{secondLpBoundOnU}
\end{align}
where $S_0=\sup_{M \backslash V} |u| $.
Since $\eta$ is $p$-harmonic in $\mathpzc{A}_{R_1,R_2}$, $\eta=1$ on $\partial B(o,R_1)$ and $\eta = 0$ on $\partial B(o,R_2)$, then
$$ \int_{\mathpzc{A}_{R_1,R_2}} |\nabla \eta|^p dx  = \inf_{v \in \mathcal{F}_{B(o,R_1),B(o,R_2)}} \int_{M} |\nabla v|^p dx = \capa(R_1,R_2).$$
Hence, from \eqref{firstLpBoundOnU} and \eqref{secondLpBoundOnU}, we conclude
that
\begin{equation*} \int_F \eta^p \, |\nabla u|^p \,dx  \le
C_1 \; 2^p \; S_0^p \; m^p \, \text{Vol} (U\backslash V ) + C_1 \; \left(\osc{\mathpzc{A}_{R_1, R_2}} u \right)^p \; \capa(R_1,R_2),
\end{equation*}
where $C_1 = \left(p \, D_2 / c_1 \right)^p$. Since
$B(o,R_1) \backslash U \subset F$ and
$\eta=1$ in $B(o,R_1) \backslash U$,
$$ \int_{B(o,R_1) \backslash U}  |\nabla u|^p \, dx
\,\le\, C_2 + C_1 \; \left(\osc{\mathpzc{A}_{R_1, R_2}} u \right)^p \; \capa(R_1,R_2), $$
where $C_2= C_1 \; 2^p \; S_0^p \; m^p \, \text{Vol} (U\backslash V )$.
In particular, this holds for $R_1^k$ and $R_2^k$.
Then, according to the hypotheses, the right-hand side is bounded, proving (a).

\vspace{.2cm}
\noindent {\bf (b)} Taking the test function $\varphi := \eta^p u$ and applying the same argument as in (a), we conclude (b). The main difference in relation to (a) is that we have to replace $S_0$ by $\sup_{U \backslash V} |u|$ and $\osc{\mathpzc{A}_{R_1, R_2}} u$ by $\max_{\mathpzc{A}_{R_1^k, R_2^k}} |u|$.
\end{proof}

The idea of the proof of the next theorem is based on the
proof of Theorem 2 of \cite{BSZ}.

\begin{theorem}
Let $u,v \in C(\overline{M \backslash K}) \cap C^{1}(M \backslash K)$ be weak
solutions of \eqref{exteriorProblem} in $M\backslash K$, where $A$ satisfies
\eqref{AdeZeroeZero} - \eqref{UpperBoundForA} and $p >1$. Assume also that
\begin{equation}
\sup_{k} \left( \max_{\mathpzc{A}_{R_{1}^{k}, R_{2}^{k}}} |u|\right) ^{p} \!\!
\mathrm{\; cap}_{p} \, (R_{1}^{k},R_{2}^{k}) \quad\mathrm{and} \quad\sup_{k}
\left( \max_{\mathpzc{A}_{R_{1}^{k}, R_{2}^{k}}} |v|\right) ^{p} \!\!
\mathrm{\; cap}_{p} \, (R_{1}^{k},R_{2}^{k})\label{boundedCapacityHypotheses}%
\end{equation}
are finite for some sequences $(R_{1}^{k})$ and $(R_{2}^{k})$ such that
$R_{2}^{k} > R_{1}^{k} \to\infty$. 
\newline If $u \le v$ on $\partial K$, then
\[
u \le v \quad\text{in} \quad M \backslash K.
\]
\label{GenericComparisonTheorem}
\end{theorem}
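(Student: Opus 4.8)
The plan is to argue by contradiction following the standard ``Moser/comparison'' scheme: suppose $\sup_{M\backslash K}(u-v)=:\sigma>0$, and show that the excess mass of $u-v$ over any positive level forces a contradiction with the capacity decay. First I would set $w:=u-v$; since $u,v$ are continuous on $\overline{M\backslash K}$ and $w\le 0$ on $\partial K$, for any fixed $\tau\in(0,\sigma)$ the set where $w>\tau$ is a well-defined open subset $\Omega_\tau\subset M\backslash K$ whose closure misses $\partial K$ once $\tau$ is not too small; pick such a $\tau$. Enlarge $K$ to a compact $K'$ with $\overline{\Omega_\tau}\cap K=\emptyset$ absorbed so that $\partial K'$ is smooth and $\{w\ge\tau\}\subset M\backslash K'$, and choose the open bounded set $U$ from Proposition \ref{LpBoundnessOfDerivativeOfSolutionMain} with $K'\subset\subset U$ and $\overline{U}\cap\{w\ge 2\tau\}=\emptyset$, which is possible because $\{w\ge 2\tau\}$ is closed and can be arranged to stay away from $K'$ by shrinking $\tau$ if necessary (using $w\le 0$ on $\partial K$).

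The heart of the argument is the Caccioppoli-type estimate obtained by testing the equations for $u$ and $v$ against $\varphi:=\eta^p\,(w-\tau)^+$, where $\eta=\eta_{R_1^k,R_2^k}$ is the cut-off from Lemma \ref{etaDefinition} (which equals $1$ on $B(o,R_1^k)\backslash U$, vanishes on $V$ and outside $B(o,R_2^k)$, is $p$-harmonic in the annulus, and has gradient bounded by $m$ on $U\backslash V$). Subtracting the two weak formulations gives
\[
\int \eta^p\,\bigl(\mathcal{A}(\nabla u)-\mathcal{A}(\nabla v)\bigr)\cdot\nabla w \,dx
= -\int p\,\eta^{p-1}(w-\tau)^+\,\nabla\eta\cdot\bigl(\mathcal{A}(\nabla u)-\mathcal{A}(\nabla v)\bigr)\,dx,
\]
where all integrals are over $\{w>\tau\}$, which is compactly inside $M\backslash K'$. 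By \eqref{GenericInequalityFromBelowForA} the left side dominates $c_1\int_{\{w>\tau\}}\eta^p(|\nabla u|+|\nabla v|)^{p-2}|\nabla w|^2\,dx$, while by \eqref{GenericInequalityFromAboveForA} and Young's inequality the right side is absorbed except for a term of the form $C\int_{\{w>\tau\}}(w-\tau)^p\,|\nabla\eta|^p\,dx$. Splitting $|\nabla\eta|$ over $U\backslash V$ (where it is $\le m$ and contributes a fixed constant depending on $K',U,V$ and the sup of $|w|$ on that region, which is finite by Proposition \ref{LpBoundnessOfDerivativeOfSolutionMain} applied to each of $u,v$) and over the annulus $\mathpzc{A}_{R_1^k,R_2^k}$ (where $\int|\nabla\eta|^p = \capa(R_1^k,R_2^k)$ and $(w-\tau)^p\le(2\max_{\mathpzc{A}}|u|+2\max_{\mathpzc{A}}|v|)^p$ up to constants), hypothesis \eqref{boundedCapacityHypotheses} makes the annular term uniformly bounded in $k$. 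Since $\eta\uparrow 1$ on $M\backslash U$, Fatou then yields $(|\nabla u|+|\nabla v|)^{(p-2)/2}\nabla w\in L^2$ on $\{w>\tau\}\backslash U$; combined with the $L^p$ gradient bounds for $u$ and $v$ from Proposition \ref{LpBoundnessOfDerivativeOfSolutionMain} (which already give finiteness away from $U$), one concludes $\nabla w\in L^p(\{w>\tau\})$ and in fact $\int_{\{w>\tau\}}(|\nabla u|+|\nabla v|)^{p-2}|\nabla w|^2\,dx<\infty$ with a bound independent of the choice of level within $(\tau,\sigma)$.

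Finally I would run the level-set / layer-cake step: for $\tau<t<s<\sigma$ the above estimate applied on $\{w>t\}$ with the genuine test function $(w-t)^+$ and $\eta\equiv 1$ on the relevant region (now legitimate because $\{w>t\}$ is bounded modulo the $L^p$ control just obtained, so we may pass to the limit $R_2^k\to\infty$ and the annular term tends to $0$ along the subsequence since $\capa(R_1^k,R_2^k)$ stays bounded while $\eta\to 1$ kills the boundary layer) forces $\int_{\{w>t\}}(|\nabla u|+|\nabla v|)^{p-2}|\nabla w|^2\,dx=0$, hence $\nabla w=0$ a.e. on $\{w>t\}$ for every such $t$, so $w$ is locally constant there; since $w$ is continuous, equals $t$ on $\partial\{w>t\}$, and is constant on each component, no component can exist, i.e. $\{w>t\}=\emptyset$, contradicting $t<\sigma$. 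Therefore $\sigma\le 0$ and $u\le v$ in $M\backslash K$.

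\textbf{Main obstacle.} The delicate point is the passage to the limit in the cut-off: one must show the annular error term $\bigl(\max_{\mathpzc{A}_{R_1^k,R_2^k}}|w|\bigr)^p\capa(R_1^k,R_2^k)$, or rather the mixed term coming from Young's inequality on $\int \eta^{p-1}(w-\tau)^+|\nabla\eta||\mathcal{A}(\nabla u)-\mathcal{A}(\nabla v)|$, genuinely vanishes (or at least stays controlled and then disappears after the level-set iteration) — this is where hypothesis \eqref{boundedCapacityHypotheses} on \emph{both} $u$ and $v$ is used in an essential way, together with the already-established $L^p$ integrability of $\nabla u,\nabla v$ outside $U$ from Proposition \ref{LpBoundnessOfDerivativeOfSolutionMain}, via dominated convergence applied to the piece $\int_{\mathpzc{A}}\eta^{p-1}(w-\tau)^+|\nabla\eta|(|\nabla u|+|\nabla v|)^{p-1}$. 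Handling the two ranges $1<p\le 2$ and $p\ge 2$ separately, using \eqref{CoercivityForMathcalAForP2} and \eqref{BoundednessForMathcalAForP12} respectively to simplify the degenerate weights, will keep the Young's-inequality bookkeeping manageable.
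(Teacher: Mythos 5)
Your overall skeleton is the paper's: test the difference of the two weak formulations against $\eta^p$ times the positive part of $u-v-\tau$ (the paper writes this as $\eta^p(v-u+\varepsilon_k)^-$ with $\varepsilon_k\downarrow 0$ in place of your fixed $\tau$), use \eqref{GenericInequalityFromBelowForA} on the left, and control the right-hand side by the capacity of the annulus. The fixed-$\tau$ variant is fine, and choosing $U$ so that $(u-v-\tau)^+$ vanishes there correctly kills the $U\backslash V$ contribution of $\nabla\eta$. But there is a genuine gap at the decisive step.

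The gap is that you never prove the error term tends to zero; you only prove it is bounded, and boundedness is not enough. After your Young's-inequality absorption the leftover is of the form $C\int_{\mathpzc{A}_{R_1^k,R_2^k}}\bigl((u-v-\tau)^+\bigr)^p|\nabla\eta|^p\,dx\le C\bigl(\max_{\mathpzc{A}}|u|+\max_{\mathpzc{A}}|v|\bigr)^p\mathrm{\;cap}_p(R_1^k,R_2^k)$, which hypothesis \eqref{boundedCapacityHypotheses} only makes \emph{bounded} in $k$; so you obtain finiteness of $\int_{\{u-v>\tau\}}(|\nabla u|+|\nabla v|)^{p-2}|\nabla(u-v)|^2$, not its vanishing. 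Your attempted upgrade in the last paragraph does not work: $(u-v-t)^+$ with $\eta\equiv1$ is not an admissible test function (it need not lie in $W_0^{1,p}(M\backslash K)$, since no decay of the function itself is known), and the assertion that "$\mathrm{\;cap}_p(R_1^k,R_2^k)$ stays bounded while $\eta\to1$ kills the boundary layer" is unjustified — the hypotheses allow $\mathrm{\;cap}_p(R_1^k,R_2^k)\to\infty$ (only the product with $\max|u|^p$ is bounded), and pointwise convergence $\eta\to1$ says nothing about $\int S_k^p|\nabla\eta|^p$, which equals a capacity, not a quantity forced to vanish. The paper's mechanism, which your absorption destroys, is to keep H\"older's product structure intact: the right-hand side is bounded by
\[
\frac{2D_2p}{c_1}\Bigl(\int_{\mathpzc{A}_{R_1^k,R_2^k}}S_k^p|\nabla\eta|^p\,dM\Bigr)^{1/p}\Bigl(\|\nabla u\|_{L^p(\mathpzc{A}_{R_1^k,R_2^k})}^{p-1}+\|\nabla v\|_{L^p(\mathpzc{A}_{R_1^k,R_2^k})}^{p-1}\Bigr),
\]
where the first factor is bounded by \eqref{boundedCapacityHypotheses} and the second factor tends to zero because Proposition \ref{LpBoundnessOfDerivativeOfSolutionMain}(b) (whose hypothesis is exactly \eqref{boundedCapacityHypotheses}) gives $|\nabla u|,|\nabla v|\in L^p(M\backslash U)$, so the integrals over annuli receding to infinity are tails of convergent integrals. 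You gesture at this in your "main obstacle" paragraph via dominated convergence, but dominated convergence over a domain that changes with $k$ is not an argument; the tail-of-a-convergent-integral observation is the missing ingredient, and without it the proof does not close.
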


\begin{proof}
Let $(\varepsilon_k)$ be a sequence of positive numbers such that $\varepsilon_k \downarrow 0$ and
\begin{equation}
\lim_{k \to \infty} \varepsilon_k^p \; \capa (R_1^k,R_2^k) = 0.
\label{varepsilonKgoesToZero}
\end{equation}
Observe that $v-u + \varepsilon_k \ge \varepsilon_k $ on $\partial K$ for any $k$.
Then, using that $v-u + \varepsilon_k$ is continuous, we conclude that
there exists a bounded open set $U =U_k \supset K$
such that $ v-u + \varepsilon_k > 0$ in $U \backslash K$. We can suppose that $U_k$ is a decreasing sequence and $\bigcap_{k=1}^{\infty} U_k = K$.
As in the previous proposition, let $V=V_k$ be an open set and $R_0 \geq 1$ such that $ K \subset V_k \subset \subset U_k \subset U_1 \subset B(o,R_0)$.
For $R_2^k > R_1^k > R_0$, let $\eta=\eta_{R_1^k, R_2^k}$
as described in Lemma \ref{etaDefinition}.
Observe that
$$(v-u + \varepsilon_k)^- \;\!=\, \max \,\{-(v-u+\varepsilon_k),0\}
\:\!=\, 0
\quad {\rm in} \quad U \backslash K.$$
Then
$ \varphi : = \eta^p \;\!(v-u+\varepsilon_k)^-$ has a compact support
$$ F_k \subset B(o,R_2^k) \backslash U_k \subset M\backslash K.$$
Hence, using that
$ v-u + \varepsilon_k \in C^1(M\backslash K)$,
we have that $\varphi \in  W^{1,p}_0(M \backslash K)$.
Since $u$ and $v$ are weak solutions
of \eqref{exteriorProblem} and
$\varphi \in W^{1,p}_0(M \backslash K)$,
we get
\begin{equation} \int_{\mbox{}_{\scriptstyle M \backslash \;\!K}}
\hspace{-0.550cm}
\nabla \varphi \;\!\cdot [\;\! {\cal A}( \nabla v) -  {\cal A}(\nabla u)\;\!]
\, d M \,=\, 0.
\label{weakSolutionForUandV}
\end{equation}
Observe that $F_k \subset \{ x \in M \backslash K \, : \, v(x) - u(x) + \varepsilon_k \le 0\}$ and, therefore,
$$\nabla \varphi =\;\! \eta^p \;\!
\chi_{\mbox{}_{\scriptstyle F_k}} \!\!\nabla (u-v) \;\!+
p \;\!\;\! \eta^{p-1} (v-u+\varepsilon_k)^- \;\!\nabla \eta \quad \text{a.e. in } M \backslash K, $$
where
$ \:\!\chi_{\mbox{}_{\scriptstyle F_k}} \!\!\:\!$
is the characteristic function of
the set $F_k$.
Hence, it follows that
\begin{align}
\int_{\mbox{}_{\scriptstyle F_k}}
\hspace{-0.100cm}
\eta^p \, & \nabla (v-u) \cdot
[\;\! {\cal A}(\nabla v) -  {\cal A}(\nabla u)\;\!] \,dM \;\!\;\!= \nonumber \\[5pt]
&
\int_{\mbox{}_{\scriptstyle \mathpzc{A}_{R_1^k,R_2^k}}}
\hspace{-0.150cm}
p \;\!\;\! \eta^{p-1} \;\!(v-u+\varepsilon_k)^- \,
\nabla \eta
\cdot [\;\! {\cal A}(\nabla v) -  {\cal A}(\nabla u) \;\!] \,dM.
\label{weakSolAppliedInATestFunc}
\end{align}
Replacing $F_k$ by $G_k:=F_k \cap \{ x \in M \backslash K \; : \; |\nabla u(x)| + |\nabla v(x)| \ne 0\}$, the left-hand side of \eqref{weakSolAppliedInATestFunc} does not change.
Then, from \eqref{GenericInequalityFromBelowForA} and \eqref{SimpleInequalitiesForMathCalA}, we have
\begin{align}
c_1 \!
\int_{\mbox{}_{\scriptstyle G_k}}
\hspace{-0.100cm}
\eta^p \, &(|\nabla v| + |\nabla u|)^{p-2} | \nabla (v-u) |^2  \,dM \,
\le \nonumber \\[5pt]
&\le \int_{\mbox{}_{\scriptstyle \mathpzc{A}_{R_1^k,R_2^k}}} \hspace{-0.150cm}
p \;\!\;\! \eta^{p-1} \,(v-u+\varepsilon_k)^- \,
\nabla \eta \cdot
[\;\! \mathcal{A}(\nabla v) -  \mathcal{A}(\nabla u) \;\!] \,dM. \\[5pt]
& \le \,
D_2 \;\! p \!\:\!
\int_{\mbox{}_{\scriptstyle \mathpzc{A}_{R_1^k,R_2^k}}} \hspace{-0.150cm}
\eta^{p-1} \;\! S_k \, |\nabla \eta| \,
(\;\!|\nabla v |^{\:\!p-1} \!\;\!+\;\! |\nabla u|^{\:\!p-1})
\, dM,
\label{inequalityForTheDifferenceOfSolutions1}
\end{align}
where
$$ S_k = \max_{\mathpzc{A}_{R_1^k,R_2^k}} \,(v-u+\varepsilon_k)^- \le \;\!\max_{\mathpzc{A}_{R_1^k,R_2^k}}|u| + \max_{\mathpzc{A}_{R_1^k,R_2^k}} |v| + \varepsilon_k < \infty .$$
Using  H\"older's inequality,
we have
\begin{align*}
c_1 \!
\int_{\mbox{}_{\scriptstyle G_k}}
\hspace{-0.100cm}
& \eta^p \,  (|\nabla v| + |\nabla u|)^{p-2} | \nabla (v-u) |^2  \,dM \\[5pt]
& \le \,
D_2 \;\!p \!\!\;\!
\int_{\mbox{}_{\scriptstyle \mathpzc{A}_{R_1^k,R_2^k}}} \hspace{-0.250cm}
S_k \, |\nabla \eta| \;\!\;\! \eta^{\:\!p-1} \,\!
2 \max \,\{\:\!|\nabla u |, \;\!|\nabla v|\:\!\}^{p-1} \,dM \\[5pt]
& \le\,
2 D_2 \;\!p\,  \left(
\int_{\mbox{}_{\scriptstyle \mathpzc{A}_{R_1^k,R_2^k}}} \hspace{-0.250cm}
S_k^p |\nabla \eta|^p \;\!dM\;\!\;\!
\right)^{\!\!\!\;\!\frac{1}{p}}
\!\:\!
\left(
\int_{\mbox{}_{\scriptstyle \mathpzc{A}_{R_1^k,R_2^k}}} \hspace{-0.250cm}
\eta^{\:\!p} \;\!(\:\!|\nabla u |^{\:\!p} \!\;\!+\:\! |\nabla v|^{\:\!p}\:\!)
\,dM
\right)^{\!\!\!\!\;\!\frac{p-1}{p}} \!\!.
\end{align*}
Therefore,
since $ |\eta| \le 1 $ and $ p > 1 $,
\begin{align}
\int_{\mbox{}_{\scriptstyle G_k}}
\hspace{-0.100cm}
\eta^p \,  (|\nabla v| + |\nabla u|)^{p-2} & | \nabla (v-u) |^2  \,dM
\le\;
\frac{\: 2 D_2 \:\!p\;}{c_1} \;\!
\left(
\int_{\mbox{}_{\scriptstyle \mathpzc{A}_{R_1^k,R_2^k}}} \hspace{-0.250cm}
S_k^p \, |\nabla \eta|^{\:\!p} \, dM
\,
\right)^{\!\!\!\;\!\frac{1}{p}} \nonumber \\[5pt]
& \times \left(\;\!
\|\;\!\nabla u\;\!
\|_{\mbox{}_{\scriptstyle L^{p}(\mathpzc{A}_{R_1^k,R_2^k})}}
^{\mbox{}^{\scriptstyle p - 1}}
\!+\,
\|\;\!\nabla v \;\!
\|_{\mbox{}_{\scriptstyle L^{p}(\mathpzc{A}_{R_1^k,R_2^k})}}
^{\mbox{}^{\scriptstyle p - 1}}
\:\!\right)
\!\:\!.
\label{comparisonTheoremIneq1}
\end{align}
Observe now that
\begin{align}
\int_{\mbox{}_{\scriptstyle \mathpzc{A}_{R_1^k,R_2^k}}} \hspace{-0.250cm}
S_k^p \, |\nabla \eta|^{\:\!p} \, dM
&\le\;
\left[\max_{\mathpzc{A}_{R_1^k,R_2^k}} \,(v-u+\varepsilon_k)^- \right]^p  \int_{\mbox{}_{\scriptstyle \mathpzc{A}_{R_1^k,R_2^k}}} \hspace{-0.250cm}
|\nabla \eta|^{\:\!p} \, dM \nonumber \\[5pt]
&\le \; 3^p \left[ \max_{\mathpzc{A}_{R_1^k,R_2^k}} |v|^p + \max_{\mathpzc{A}_{R_1^k,R_2^k}} |u|^p + \varepsilon_k^p \right]  \capa (R_1^k,R_2^k)
\label{comparisonTheoremIneq2}
\end{align}
for all $k$.
From this and \eqref{comparisonTheoremIneq1}, we conclude that%
\begin{align}
\int_{\mbox{}_{\scriptstyle G_k}}
\hspace{-0.100cm}
& \eta^p \,  (|\nabla v| + |\nabla u|)^{p-2}  | \nabla (v-u) |^2  \,dM   \le
\frac{\:2 D_2 \:\!p\; 3^p}{c_1}
\;\!\;\!
\left[ \max_{\mathpzc{A}_{R_1^k,R_2^k}} |v|^p +  \right. \nonumber \\[5pt]
&\; \left. \max_{\mathpzc{A}_{R_1^k,R_2^k}} |u|^p + \varepsilon_k^p \right]  \capa (R_1^k,R_2^k)
\left(\;\!
\|\;\!\nabla u\;\!
\|_{\mbox{}_{\scriptstyle L^{p}(\mathpzc{A}_{R_1^k,R_2^k})}}
^{\mbox{}^{\scriptstyle p - 1}}
\!+\,
\|\;\!\nabla v \;\!
\|_{\mbox{}_{\scriptstyle L^{p}(\mathpzc{A}_{R_1^k,R_2^k})}}
^{\mbox{}^{\scriptstyle p - 1}}
\:\!\right).
\label{EstimateOfDiffOfSolByPhiAndGrad}
\end{align}
Note that the last two terms in this inequality converges to zero, according to Proposition \ref{LpBoundnessOfDerivativeOfSolutionMain}. Moreover,
$$\max_{\mathpzc{A}_{R_1^k,R_2^k}} |v|^p \capa (R_1^k,R_2^k) \quad {\rm and} \quad \max_{\mathpzc{A}_{R_1^k,R_2^k}} |u|^p \capa (R_1^k,R_2^k) $$ are bounded from hypothesis and $\varepsilon_k^p \capa (R_1^k,R_2^k)$ is bounded from \eqref{varepsilonKgoesToZero}. Hence, the right-hand side of \eqref{EstimateOfDiffOfSolByPhiAndGrad}
converges to $0$ as $k \to \infty$.
On the other hand,
$$
\int_{\mbox{}_{\scriptstyle G_k}}
\hspace{-0.300cm}
\eta^p_{R_1^k,R_2^k} \, (|\nabla v| + |\nabla u|)^{p-2}  | \nabla (v-u) |^2 \, dM
\;\to
\int_{\mbox{}_{\scriptstyle G_0}}
\hspace{-0.300cm}
(|\nabla v| + |\nabla u|)^{p-2}  | \nabla (v-u) |^2 \,dM
$$
as $k \to \infty$, where
$$G_0=\{ x \in M \backslash K \, : \, v(x) < u(x) \} \cap \{ x \in M \; \backslash K : \; |\nabla u(x)| + |\nabla v(x)| \ne 0\} $$ since $\eta_{R_1^k,R_2^k} \to 1$ in $M \backslash K$ as $k \to \infty$ and $G_k$ is an increasing sequence ($F_k$ is an increasing sequence) of sets such that $\bigcup G_k = G_0$. Then
\begin{equation}
\int_{\mbox{}_{\scriptstyle G_0}}
\hspace{-0.300cm}
(|\nabla v| + |\nabla u|)^{p-2}  | \nabla (v-u) |^2 \,dM \;\!\;\!=\;\!\;\! 0.
\label{NullDirichletIntegralInF}
\end{equation}
Therefore, $\chi_{\mbox{}_{\scriptstyle G_0}} \nabla (v-u) =0$. Since
$$ \nabla ( v - u )^{-}
\!\;\!=\;\!
\chi_{\mbox{}_{\scriptstyle \{ x \in M \backslash K \; : \; v(x) < u(x) \} }}
\!\!
\nabla (u - v) \!\;\!=\;\!
\chi_{\mbox{}_{\scriptstyle G_0}}
\!\!
\nabla (u - v) \quad {\rm a.e.\; \;in} \; \;  M \backslash K ,
$$
it follows that $\nabla ( v - u )^{-} = 0$ a.e. in $M \backslash K$.
Using this and that
$ (v - u)^{-} \!\:\!=\:\!0 \:\!$
on $\partial K $,
we conclude that
$ (v - u)^{-} \!\:\!=\:\! 0 $
in $M \backslash K $.
Therefore,
$$ u(x) \,\leq\, v(x)
\quad \text{for} \quad x \in M \backslash K, $$
proving the result.
\end{proof}

In particular, if $u$ and $v$ are bounded, the control of $\mathrm{\; cap}_{p}
\, (R_{1}^{k},R_{2}^{k})$ guarantees the comparison principle for exterior domains:

\begin{corollary}
Let $M$ be a complete noncompact Riemannian manifold. Let $A$ be a function that
satisfies \eqref{AdeZeroeZero} - \eqref{UpperBoundForA} and $p > 1$. If
\[
\sup_{k} \mathrm{\; cap}_{p} \, (R_{1}^{k},R_{2}^{k}) < +\infty,
\]
then the comparison principle holds for the exterior problem
\eqref{exteriorProblem}. \label{genericComparisonResultForBoundedSol}
\end{corollary}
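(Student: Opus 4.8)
The plan is to read this off directly from Theorem~\ref{GenericComparisonTheorem}. Fix a compact set $K\subset M$ and let $u,v\in C(\overline{M\backslash K})\cap C^{1}(M\backslash K)$ be \emph{bounded} weak solutions of \eqref{exteriorProblem} in $M\backslash K$ with $u\le v$ on $\partial K$. The single thing to verify is that boundedness of $u,v$, together with the hypothesis $\sup_{k}\capa(R_{1}^{k},R_{2}^{k})<+\infty$, upgrades automatically to the two finiteness conditions \eqref{boundedCapacityHypotheses} required by that theorem, for the \emph{same} pair of sequences $(R_{1}^{k})$, $(R_{2}^{k})$.

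This step is immediate. Put $L:=\sup_{M\backslash K}\bigl(|u|+|v|\bigr)<+\infty$. For every $k$ we have $\max_{\mathpzc{A}_{R_{1}^{k},R_{2}^{k}}}|u|\le L$ and $\max_{\mathpzc{A}_{R_{1}^{k},R_{2}^{k}}}|v|\le L$, hence
\[
\sup_{k}\Bigl(\max_{\mathpzc{A}_{R_{1}^{k},R_{2}^{k}}}|u|\Bigr)^{p}\capa(R_{1}^{k},R_{2}^{k})
\;\le\; L^{p}\,\sup_{k}\capa(R_{1}^{k},R_{2}^{k})\;<\;+\infty,
\]
and similarly with $v$ in place of $u$. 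Thus both quantities in \eqref{boundedCapacityHypotheses} are finite, Theorem~\ref{GenericComparisonTheorem} applies verbatim, and it yields $u\le v$ in $M\backslash K$. Since $K$ was an arbitrary compact set, the comparison principle for the exterior problem \eqref{exteriorProblem} holds.

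I do not expect a genuine obstacle here: all the analytic work has already been carried out in Proposition~\ref{LpBoundnessOfDerivativeOfSolutionMain} and Theorem~\ref{GenericComparisonTheorem}, and the role of this corollary is only to record the clean, uniform case in which the $\max_{\mathpzc{A}}|u|$ and $\max_{\mathpzc{A}}|v|$ factors can be absorbed into the single constant $L$. The one point worth stating carefully in the final write-up is the convention being used, namely that ``comparison principle for \eqref{exteriorProblem}'' here means comparison between bounded continuous weak solutions on an arbitrary exterior domain $M\backslash K$, so that boundedness is the only hypothesis needed on $u$ and $v$ beyond their being solutions continuous up to $\partial K$.
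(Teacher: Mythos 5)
Your proposal is correct and is exactly the argument the paper intends: the corollary is stated as an immediate consequence of Theorem~\ref{GenericComparisonTheorem}, obtained by absorbing the bounded factors $\max_{\mathpzc{A}_{R_1^k,R_2^k}}|u|$ and $\max_{\mathpzc{A}_{R_1^k,R_2^k}}|v|$ into a single constant so that the finiteness conditions \eqref{boundedCapacityHypotheses} follow from $\sup_k\capa(R_1^k,R_2^k)<+\infty$. Your closing remark about the convention (comparison among bounded solutions on an arbitrary exterior domain) matches the paper's usage.
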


\ 

\begin{remark}
\label{comparisonRemarkForPEqualTo2}
For $p=2$, Theorem \ref{GenericComparisonTheorem} holds even if the terms in
\eqref{boundedCapacityHypotheses} are not necessarily bounded, provided that
\begin{equation}
\max_{\mathpzc{A}_{R_{1}^{k}, R_{2}^{k}}} |v - u|^{p} \mathrm{\; cap}_{p} \,
(R_{1}^{k},R_{2}^{k}) \to0 \quad\mathrm{as} \quad k \to+\infty
.\label{differenceBetweenUandVGoesToZero}%
\end{equation}

\ 

\noindent Indeed, according to \eqref{weakSolAppliedInATestFunc},
\eqref{CoercivityForMathcalAForP2}, \eqref{BoundednessForMathcalAForP12} and
the definition of $F_{k}$, for $p=2$,
\begin{align*}
c_{1} \! \int_{\mbox{}_{\scriptstyle F_{k}}} \hspace{-0.100cm} \eta^{2} \, |
\nabla(v-u) |^{2} \;\!dM \,  & \le\, 2 \; c_{2} \!\:\! \int
_{\mbox{}_{\scriptstyle \mathpzc{A}_{R_{1}^{k},R_{2}^{k}} \cap F_{k}} }
\hspace{-0.150cm} \eta\;\! S_{k} \, |\nabla\eta| \, |\nabla v - \nabla u| \,
dM\\[5pt]
& \le\left(  \int_{\mbox{}_{\scriptstyle F_{k}}} \hspace{-0.100cm} \eta^{2} \,
| \nabla(v-u) |^{2} \;\!dM \right) ^{\frac{1}{2}} \left(  \int
_{\mbox{}_{\scriptstyle \mathpzc{A}_{R_{1}^{k},R_{2}^{k}}}} \hspace{-0.250cm}
S_{k}^{2} |\nabla\eta|^{2} \;\!dM \right) ^{\frac{1}{2}}.
\end{align*}
Therefore
\begin{align*}
\int_{\mbox{}_{\scriptstyle F_{k}}} \hspace{-0.100cm} \eta^{2} \, |
\nabla(v-u) |^{2} \;\!dM \,  & \le\, \left(  \frac{2 \; c_{2}}{c_{1}} \right)
^{2} \int_{\mbox{}_{\scriptstyle \mathpzc{A}_{R_{1}^{k},R_{2}^{k}}}}
\hspace{-0.250cm} S_{k}^{2} |\nabla\eta|^{2} \;\!dM .
\end{align*}
Doing the same computation as in \eqref{comparisonTheoremIneq2}, we have
\[
\int_{\mbox{}_{\scriptstyle \mathpzc{A}_{R_{1}^{k},R_{2}^{k}}}} \hspace
{-0.250cm} S_{k}^{2} \, |\nabla\eta|^{\:\!2} \, dM \le\; 2^{2} \left[
\sup_{\mathpzc{A}_{R_{1}^{k},R_{2}^{k}}} |v - u|^{p} + \varepsilon_{k}^{p}
\right]  \mathrm{\; cap}_{p} \, (R_{1}^{k},R_{2}^{k})
\]
and, therefore,
\[
\int_{\mbox{}_{\scriptstyle F_{k}}} \hspace{-0.100cm} \eta^{2} \, |
\nabla(v-u) |^{2} \;\!dM \, \le\, \left(  \frac{2 \; c_{2}}{c_{1}} \right)
^{2} 4 \left[  \sup_{\mathpzc{A}_{R_{1}^{k},R_{2}^{k}}} |v - u|^{p} +
\varepsilon_{k}^{p} \right]  \!\! \mathrm{\; cap}_{p} \, (R_{1}^{k},R_{2}%
^{k}).
\]
Hence, using \eqref{differenceBetweenUandVGoesToZero} and
\eqref{varepsilonKgoesToZero}, we have that
\[
\int_{\mbox{}_{\scriptstyle F_{k}}} \hspace{-0.100cm} \eta_{R_{1}^{k}%
,R_{2}^{k}}^{2} \, | \nabla(v-u) |^{2} \;\!dM \to0.
\]
Then, as in the theorem, relation \eqref{NullDirichletIntegralInF} holds and
following the same argument as before, we conclude that $u \le v$.
\end{remark}

\section{Equivalence between the $p-$parabolicity and the $p-$comparison principle}

In this section we prove that the $p$-comparison principle, as defined previously,
holds for the exterior problem \eqref{exteriorProblem} if and only if $M$ is
$p$-parabolic. 

First note that
\begin{equation}
\mathrm{\; Cap}_{p} \,(E) \le\mathrm{\; cap}_{p} \,(E;\Omega) \quad
\mathrm{and} \quad\mathrm{\; Cap}_{p} \,(E) \le\mathrm{\; Cap}_{p} \,(F)
\label{relForCapacities}%
 \end{equation}
for any $\Omega\subset M$ and $E \subset F \subset M$, since $\mathcal{F}_{E,\Omega} \subset \mathcal{F}_{E,M}$ and
$\mathcal{F}_{E,M} \supset \mathcal{F}_{F,M}$ in this case.

We also need the next result, that is an extension of the one established in Corollary 4.6 of \cite{Gr0} to the case $p=2$.

\begin{lemma}
If $U \subset M$ is a bounded domain with a $C^{2}$ boundary and $\mathrm{\;
Cap}_{p} \, (U) > 0$, then there exists some non-constant $p$-harmonic
function $u$ such that $u=1$ on $\partial U$ and $0 < u < 1$ in $M \backslash
U$. \label{existenceOfNonConstantp-harmonic}
\end{lemma}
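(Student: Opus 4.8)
The plan is to construct the non-constant $p$-harmonic function $u$ as a limit of solutions to exterior Dirichlet problems on an exhaustion of $M$, using the positivity of $\mathrm{Cap}_p(U)$ to guarantee non-degeneracy in the limit. First I would fix a smooth exhaustion $U \subset\subset \Omega_1 \subset\subset \Omega_2 \subset\subset \cdots$ with $\bigcup_k \Omega_k = M$, and on each annular region $\Omega_k \setminus \overline{U}$ solve the $p$-Laplace equation with boundary data $u_k = 1$ on $\partial U$ and $u_k = 0$ on $\partial \Omega_k$; extend $u_k$ by $1$ on $\overline U$. By the classical variational theory (minimization of the Dirichlet $p$-energy over the appropriate convex set in $W^{1,p}$, as already invoked in the proof of Lemma~\ref{etaDefinition}), each $u_k$ exists, is unique, is $p$-harmonic in $\Omega_k \setminus \overline U$, and by the maximum principle satisfies $0 < u_k < 1$ there. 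Moreover the sequence $(u_k)$ is monotone increasing in $k$ (comparison on the annuli, since enlarging $\Omega_k$ only raises the competitor), and $\int_{\Omega_k\setminus U} |\nabla u_k|^p = \mathrm{cap}_p(U;\Omega_k)$, which by \eqref{CapaIsLimitOfcapa} increases to $\mathrm{Cap}_p(U) \in (0,+\infty]$.

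The second step is to pass to the limit $u := \lim_k u_k$. The monotone bounded sequence converges pointwise to some $u$ with $0 \le u \le 1$ and $u = 1$ on $\partial U$; using the local $C^{1,\gamma}$ estimates for $p$-harmonic functions together with a Harnack-type argument (or, more elementarily, the gradient bound $\int_{\Omega_j \setminus U}|\nabla u_k|^p \le \mathrm{Cap}_p(U)$ uniform in $k$ on each fixed $\Omega_j$, giving weak $W^{1,p}_{\mathrm{loc}}$ compactness and lower semicontinuity of the energy) one concludes that $u$ is $p$-harmonic in $M\setminus\overline U$ and that the convergence is locally $C^1$. The key point where $\mathrm{Cap}_p(U) > 0$ enters is to rule out $u \equiv 1$: if $u$ were identically $1$ on $M \setminus U$, then for any fixed test-type competitor one would be forced to conclude $\mathrm{Cap}_p(U) = 0$. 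Concretely, if $u\equiv 1$ one can extract, for each compact $\Omega_j$, a function in $\mathcal{F}_{U,M}$ close to $u_k$ with small energy, contradicting $\mathrm{cap}_p(U;\Omega_j) \ge \mathrm{Cap}_p(U) > 0$ — this is exactly the argument behind Corollary~4.6 of \cite{Gr0}, which we are adapting. Hence $u \not\equiv 1$, so $u < 1$ somewhere and, by the strong maximum principle for $p$-harmonic functions (or Harnack), $0 \le u < 1$ everywhere in $M \setminus \overline U$; a further application of Harnack to $1 - u$ (which is $p$-harmonic, non-negative, not identically zero) forces $u < 1$ strictly, and to $u$ itself (if $u$ vanished at an interior point it would vanish identically, but $u$ is continuous up to $\partial U$ where it equals $1$) gives $u > 0$, so in fact $0 < u < 1$ in $M \setminus \overline U$ as claimed. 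The $C^2$ regularity of $\partial U$ is used to guarantee the boundary values are attained continuously and that $u_k \to 1$ near $\partial U$ uniformly.

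The main obstacle I expect is the non-degeneracy step: showing rigorously that $\mathrm{Cap}_p(U) > 0$ forces $u \not\equiv 1$. One must be careful because $\mathrm{Cap}_p(U)$ may be infinite, in which case $u_k$ does not converge in $W^{1,p}(M\setminus U)$ globally, only locally; the correct formulation is that for each fixed large ball $\Omega_j$, the truncated/restricted competitors have energy bounded below away from $0$, and $u \equiv 1$ would let one build admissible competitors in $\mathcal{F}_{U;\Omega_j}$ of arbitrarily small energy for $j$ large — contradicting Corollary~\ref{Dichotomy} / the monotonicity $\mathrm{cap}_p(U;\Omega_j)\ge \mathrm{Cap}_p(U)>0$. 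A clean way to organize this is: suppose $u\equiv 1$; then $u_k\to 1$ locally uniformly, so given $\varepsilon>0$ pick $k$ with $u_k > 1-\varepsilon$ on, say, $\Omega_1$, and use $w := (u_k - (1-\varepsilon))^+/\varepsilon$ truncated to have support in $\Omega_k$; a computation shows $\int |\nabla w|^p \le \varepsilon^{-p}\int_{\{u_k>1-\varepsilon\}\setminus U}|\nabla u_k|^p$, and since $\{u_k > 1-\varepsilon\}$ is a neighborhood of $\overline U$ on which $u_k$ is $p$-harmonic with small oscillation, Caccioppoli-type estimates make this energy small, contradicting $\mathrm{cap}_p(U;\Omega_k) > 0$. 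The remaining steps — existence and monotonicity of $u_k$, local convergence, and the maximum-principle consequences — are standard and I would present them briskly, citing the regularity theory for $p$-harmonic functions and the variational setup already used in Lemma~\ref{etaDefinition}.
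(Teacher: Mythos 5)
Your overall strategy coincides with the paper's: exhaust $M$ by domains $W_k$, take the capacity potentials $u_k$ of the pair $(U,W_k)$, use monotonicity to pass to a pointwise limit $u$, show $u$ is $p$-harmonic via local gradient estimates and weak $W^{1,p}_{\rm loc}$ compactness, and use $\Capa(U)>0$ only to rule out $u\equiv 1$. Up to and including the $p$-harmonicity of the limit and the deduction of $0<u<1$ from the strong maximum principle, your argument is essentially the paper's.

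The gap is in the one step that actually uses the hypothesis, namely ruling out $u\equiv 1$. Your proposed competitor $w=(u_k-(1-\varepsilon))^+/\varepsilon$ cannot produce a contradiction: by the standard level-set energy identity for capacity potentials,
\[
\int_{\{1-\varepsilon<u_k<1\}}|\nabla u_k|^p\,dx=\varepsilon\,\capa(U;W_k),
\]
so $\int|\nabla w|^p=\varepsilon^{1-p}\,\capa(U;W_k)\ge\capa(U;W_k)$ for $p>1$ --- exactly what admissibility of $w$ forces, and never small. The appeal to Caccioppoli estimates with small oscillation does not repair this: the set $\{u_k>1-\varepsilon\}$ is not a fixed compact neighborhood of $\overline U$ (if $u\equiv 1$ it eventually contains every compact set as $k\to\infty$), so a local oscillation bound there controls nothing about the total energy. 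The paper closes this step by a different device: it tests the equation for $u_k$ against $\varphi=\rho^p u_k$, where $\rho$ vanishes near $\overline U$ and equals $1$ outside a fixed ball $B^*$; this localizes the entire energy $\capa(U;W_k)\ge\Capa(U)>0$ onto integrals of $|\nabla u_k|$ over the \emph{fixed} compact set $\overline{B^*}\setminus U$ (inequality \eqref{sumOfIntegralsBoundedFromBelow}). It then shows $|\nabla u_k|\to 0$ boundedly on that set by applying the Wang--Zhang gradient estimate to the positive $p$-harmonic function $1-u_k$, combined with the barrier $1-u_k\le\min\{1,c_1\,{\rm dist}(\cdot,\partial U)\}$ obtained from $u_k\ge u_1$ and the $C^2$ regularity of $\partial U$ (this is where the smoothness hypothesis is really needed, not merely for continuity of the boundary values). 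Dominated convergence then contradicts $\Capa(U)>0$. You would need to replace your truncation argument by something of this kind. (Two minor slips: $\capa(U;W_k)$ \emph{decreases}, not increases, to $\Capa(U)$, and it is always finite.)
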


\begin{proof}
Let $(W_k)$ be an increasing sequence of bounded domains with $C^2$ boundary such that
$$  \bigcup_{k=1}^{\infty} W_k = M \quad {\rm and } \quad  U \subset \subset W_k \subset \subset W_{k+1}  \quad {\rm for \; any} \; k \in \mathbb{N}. $$
(If there exists an increasing sequence of balls $B(o,R_k)$ with $C^2$ boundary, where $R_k \to +\infty$, we can take $W_k=B(o,R_k)$.) From the theory for PDE, there exists a function $u_k \in \mathcal{F}_{U,W_k}$ such that
\begin{equation}
\int_{M} |\nabla u_k|^p \, dx = \inf_{v \in \mathcal{F}_{U,W_k}} \int_{M} |\nabla v|^p dx = \capa (U; W_k).
\label{relCapacityOfUandWk}
\end{equation}
Moreover, $u_k$ is the $p$-harmonic function in the $A_{k}=A_{U,W_k}:=W_k \backslash \overline{U}$ such that $$u_k=1 \quad {\rm on} \quad \partial U \quad {\rm and} \quad  u_k=0 \quad {\rm on} \quad \partial W_k \quad {\rm in \; \; the \; \; trace \; \; sense.}$$
Due to smoothness of $\partial U$ and $\partial W_k$, the theory of regularity implies that $u_k \in C^1(\overline{A}_{k})$ (for instance, see \cite{HMP}).
From the strong maximum principle $0 < u_k < 1$ in $A_{k}$. Hence $u_{k} >0$ on $\partial W_{k-1}$ for any $k \ge 2$, since $\partial W_{k-1} \subset A_{k} = W_k \backslash \overline{U}$. Therefore, using the comparison principle, we conclude that
\begin{equation}
u_{k-1}  < u_{k} \quad {\rm in} \quad A_{k-1}.
\label{monotonicityOfu-R}
\end{equation}
Observe also that $u_k =1$ in $\overline{U}$ and $u_k=0$ in $M \backslash W_k$, since $u_k \in \mathcal{F}_{U,W_k}$.
(This implies that $u_k$ is continuous in $M$ due to the fact that $u_k \in C^1(\overline{A}_k)$.)
Then, from \eqref{monotonicityOfu-R}, we have that
$$  u_{k-1}  \le u_{k} \quad {\rm in} \quad M  \quad {\rm for } \quad k \in \{2,3,\dots \}.$$
Hence, using that $0 \le u_{k} \le 1$ for any $k$, we have that $u_k$ converges to some function $u$ defined in $M$ satisfying $0 \le u \le 1$. In particular, $u=1$ in $\overline{U}$.
Note also that $u \ge u_{k} > 0$ in $W_k$ for any $k$. Thus, $u > 0$ in $M$.
\\ \\ $\bullet$ {\it Statement 1}: $u$ is $p$-harmonic.
Observe that for any bounded domain $V \subset \subset M \backslash \overline{U}$, we have that $\overline{V} \subset A_k$ for large $k$.
Then, starting from some large $k$, $(u_k)$ is a uniformly bounded sequence of $p$-harmonic functions in $V$.
Hence, according to Theorem 1.1 of \cite{WZ}, there exists some constant $C >0$ that depends on $V$, $M$, $n$ and $p$ such that $|\nabla u_k| \le C$ in $\overline{V}$. Thus, $u_k$ is uniformly bounded in $W^{1,p}(V)$ and, therefore, up to a subsequence, we have that
$$u_k \rightharpoonup v_0 \quad {\rm in } \quad W^{1,p}(V) \quad {\rm and} \quad u_k \to v_0 \quad {\rm in} \quad L^p(V)$$
for some $v_0 \in W^{1,p}(V)$, due to the reflexivity of $W^{1,p}(V)$ and the Rellich-Kondrachov theorem.
Indeed, $v_0=u$ since $u_k \to u$ pointwise.
Hence, using that $u_k$ is $p$-harmonic and the same argument as in \cite{Ev} (see Theorem 3 of page 495), we conclude that
$$0=\lim_{k \to \infty} \int_{V} |\nabla u_k|^{p-2} \nabla u_k \nabla \varphi \, dx = \int_{V} |\nabla u|^{p-2} \nabla u \nabla \varphi \, dx, $$
for any $\varphi \in C_c^{\infty} (V)$, that is, $u$ is $p$-harmonic in $V$, proving the statement.
\\ \\ $\bullet$ {\it Statement 2}: $u$ is non-constant and $0< u <1$ in $M \backslash \overline{U}$.
Since we already proved that $0 < u \le 1$ in $M\backslash \overline{U}$, from the maximum principle, we just need to show that $u$ is non-constant.
Suppose that $u$ is constant. Let $B^{*}=B(o,R_0)$ be an open ball such that $\overline{U} \subset \subset B^{*}$ and $\rho \in C^{\infty}(M)$ be a function that satisfies $0 \le \rho \le 1$, $\rho =0$ in some neighborhood of $\overline{U}$ and $\rho =1$ in $M \backslash \overline{B^{*}}$. (We can suppose w.l.g. that $\overline{B^{*}} \subset W_k$ for any $k$.) Since $u_k$ is $p$-harmonic in $A_k$ and $\varphi = \rho^p u_k \in W^{1,p}_0(A_k)$, we have
$$-\int_{A_k} p \, u_k \, \rho^{p-1} |\nabla u_k|^{p-2} \nabla u_k \nabla \rho \, dx =  \int_{A_k}  \rho^{p} |\nabla u_k|^{p}  \, dx \ge \int_{M \backslash \overline{B^*}} |\nabla u_k|^{p}  \, dx. $$
Hence, using \eqref{relCapacityOfUandWk} and \eqref{relForCapacities}, we conclude that
\begin{align*}
-\int_{A_k} p \, u_k \, \rho^{p-1} |\nabla u_k|^{p-2} \nabla u_k \nabla \rho \, dx &+ \int_{\overline{B^*}} |\nabla u_k|^{p}  \, dx \\[5pt]
&\ge \int_{M \backslash \overline{B^*}} |\nabla u_k|^{p}  \, dx + \int_{\overline{B^*}} |\nabla u_k|^{p}  \, dx \\[5pt]
&= \capa (U; W_k) \ge \Capa (U) > 0.
\end{align*}
Then, from the fact that $\nabla \rho =0$ in $M\backslash \overline{B^*}$ and $\nabla u_k =0$ in $U$, we have
\begin{equation}
-\int_{\overline{B^*} \backslash U} p \, u_k \, \rho^{p-1} |\nabla u_k|^{p-2} \nabla u_k \nabla \rho \, dx +
\int_{\overline{B^*} \backslash U} |\nabla u_k|^{p}  \, dx \ge \Capa (U).
\label{sumOfIntegralsBoundedFromBelow}
\end{equation}
Now the idea is to show that the left-hand side goes to zero as $k \to +\infty$, leading a contradiction. For that, observe that since $u_1$ is $C^1(\overline{A}_1)$, $u_1=1$ on $\partial U$, $\partial U$ is $C^2$ and $0< u_1 < 1$ in $A_1$, there exists $c_1 > 0$ such that
$$ 1 - c_1 \, dist(x, \partial U) \le u_1(x) < 1 \quad {\rm for } \quad  x \in A_1.$$
Using that $0 < u_1 \le u_k < 1$ in $A_1$ for any $k$, we have
\begin{equation}
0 < 1 - u_k(x) \le \min \{ 1, c_1 \, dist(x, \partial U) \}  \quad {\rm for } \quad  x \in A_1.
\label{boundControlOfuk}
\end{equation}
Furthermore, the inclusion $\overline{B^*} \subset W_1$ implies that there exists $r_0 > 0$ such that $B(x,r_0) \subset W_1$ for any $x \in \overline{B^*}$. Therefore, for any $x \in \overline{B^*}\backslash \overline{U}$ it follows that
$$ B(x,r) \subset W_1 \backslash \overline{U}=A_1 \quad {\rm if} \quad r \le r_1:= \min \{ r_0, dist(x,\partial U)\}.$$
Hence, using that $1-u_k$ is positive and $p$-harmonic in $A_1$, Theorem 1.1 of \cite{WZ} and \eqref{boundControlOfuk}, we have
$$ |\nabla u_k(x)| \le \tilde{C} \frac{(1-u_k(x))}{r_1} \le \tilde{C} \frac{\min \{ 1, c_1 \, dist(x, \partial U) \}}{r_1} \le \tilde{C} \max \left\{ c_1 , 1/r_0 \right\}, $$
for $x \in \overline{B^*}\backslash \overline{U}$, where $\tilde{C} >0$ is a constant that depends on $n$, $p$, $M$ and $A_1$.
Therefore, the sequence $|\nabla u_k|$ is uniformly bounded in $\overline{B^*}\backslash \overline{U}$.
Hence, if we prove that $|\nabla u_k|$ converges to zero pointwise in $\overline{B^*}\backslash \overline{U}$, then the bounded convergence theorem implies that the left-hand side of \eqref{sumOfIntegralsBoundedFromBelow} converges to zero generating a contradiction.
For that, observe that $u = 1$ in $M$, since we are assuming that $u$ is a constant and $u=1$ in $U$. From the fact that $u_k \to u$, we conclude that $1-u_k \to 0$. Then, using that $ B(x,r_1) \subset A_1$ for any $x \in \overline{B^*}\backslash \overline{U}$ and
$$|\nabla u_k(x)| \le \tilde{C} \frac{(1-u_k(x))}{r_1}$$
as before, it follows that $|\nabla u_k(x)| \to 0$ as $k \to +\infty$ for $x \in \overline{B^*}\backslash \overline{U}$.
Therefore, from the bounded convergence theorem we have that left-hand side of \eqref{sumOfIntegralsBoundedFromBelow} converges to zero, contradicting $\Capa (U) > 0$.
\end{proof}

The following result  is a consequence of Corollary \ref{Dichotomy}.

\begin{lemma}
Let $M$ be a complete noncompact Riemannian manifold, $p > 1$ and
$B_{0}=B(o,R_{0})$ some open ball in $M$. Then $\mathrm{\; Cap}_{p} \, (B_{0})
=0$ if and only if there exist two sequences $R_{1}^{k}$ and $R_{2}^{k}$
such that $R_{2}^{k} > R_{1}^{k} \to+\infty$ and $\mathrm{\; cap}_{p} \,
(R_{1}^{k},R_{2}^{k},o) \to0$. \label{SecondCharacterizationForP-parabolic}
\end{lemma}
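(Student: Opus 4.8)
The plan is to establish the two implications separately: Corollary~\ref{Dichotomy} will do the work for the direction ``$\mathrm{\; Cap}_{p}\,(B_0)=0\Rightarrow$ existence of the sequences'', while the converse will follow from the elementary monotonicity of the capacity recorded in \eqref{relForCapacities} together with \eqref{CapaIsLimitOfcapa}.

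For the forward direction I would assume $\mathrm{\; Cap}_{p}\,(B_0)=0$ and first upgrade this to $\mathrm{\; Cap}_{p}\,(B(o,n))=0$ for every integer $n>R_0$. This is exactly the contrapositive of Corollary~\ref{Dichotomy}: if some ball $B(o,n)$ had positive $p$-capacity, then every ball in $M$, in particular $B_0$, would have positive $p$-capacity. Next I would invoke \eqref{CapaIsLimitOfcapa}, namely $\mathrm{\; Cap}_{p}\,(B(o,n))=\lim_{R\to+\infty}\mathrm{\; cap}_{p}\,(B(o,n);B(o,R))$, together with the fact that this limit is decreasing in $R$ (since the admissible class $\mathcal{F}_{B(o,n),B(o,R)}$ grows as $R$ increases), to select, for each $n>R_0$, a radius $R_2^n>n$ with $\mathrm{\; cap}_{p}\,(B(o,n);B(o,R_2^n))<1/n$. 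Setting $R_1^n=n$ then produces sequences with $R_2^n>R_1^n\to+\infty$ and $\mathrm{\; cap}_{p}\,(R_1^n,R_2^n,o)\to 0$, as required.

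For the converse I would start from sequences with $R_2^k>R_1^k\to+\infty$ and $\mathrm{\; cap}_{p}\,(R_1^k,R_2^k,o)\to 0$. Because $R_1^k\to+\infty$, for all large $k$ we have $B_0=B(o,R_0)\subset B(o,R_1^k)$, so any competitor that equals $1$ on $B(o,R_1^k)$ equals $1$ on $B_0$; hence $\mathcal{F}_{B(o,R_1^k),B(o,R_2^k)}\subset\mathcal{F}_{B_0,B(o,R_2^k)}$ and therefore $\mathrm{\; cap}_{p}\,(B_0;B(o,R_2^k))\le\mathrm{\; cap}_{p}\,(R_1^k,R_2^k,o)$. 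Combining this with the bound $\mathrm{\; Cap}_{p}\,(B_0)\le\mathrm{\; cap}_{p}\,(B_0;B(o,R_2^k))$ from \eqref{relForCapacities} and letting $k\to\infty$ forces $\mathrm{\; Cap}_{p}\,(B_0)=0$.

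I do not expect a genuine obstacle here; the argument is short. The only point needing a little care is that the statement insists $R_1^k\to+\infty$ rather than allowing a fixed inner ball, and this is precisely what makes Corollary~\ref{Dichotomy} essential: without the dichotomy one only controls $\mathrm{\; cap}_{p}\,(B_0;B(o,R))$ for the single ball $B_0$, whereas the diagonal choice above needs every ball $B(o,n)$ to have vanishing $p$-capacity.
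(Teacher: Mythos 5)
Your proof is correct and follows essentially the same route as the paper: the forward direction is identical (Dichotomy to kill the capacity of every ball $B(o,n)$, then \eqref{CapaIsLimitOfcapa} to pick $R_2^n$ with $\mathrm{\;cap}_p(B(o,n);B(o,R_2^n))<1/n$). Your converse is a slight streamlining — you bound $\mathrm{\;Cap}_p(B_0)\le\mathrm{\;cap}_p(B_0;B(o,R_2^k))\le\mathrm{\;cap}_p(R_1^k,R_2^k,o)$ directly by inclusion of admissible classes, whereas the paper passes through $\mathrm{\;Cap}_p(B(o,R_1^1))=0$ and invokes Corollary~\ref{Dichotomy} a second time; both are valid monotonicity arguments.
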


\begin{proof}
Suppose that $\Capa (B_0) =0$. Let $(R_1^k)$ be an increasing sequence such that $R_1^k \to +\infty$. Then, from Corollary \ref{Dichotomy}, $\Capa (B(o,R_1^k))=0$ for any $k$. Moreover, for a fixed $k$ and any sequence $(R_j)$ satisfying $R_j \to +\infty$, \eqref{CapaIsLimitOfcapa} implies that
$$ 0= \Capa (B(o,R_1^k)) = \lim_{j \to +\infty} \capa (B(o,R_1^k), B(o,R_j))$$
Hence, there exists some $R_2^k > R_1^k$ such that
$$ \capa (B(o,R_1^k), B(o,R_2^k)) < 1/k,$$
proving that $\capa (R_1^k,R_2^k,o) \to 0$.
Reciprocally, assume that $\capa (R_1^k,R_2^k,o) \to 0$, where $R_2^k > R_1^k \to +\infty$.
From \eqref{relForCapacities}, for $R_1^k > R_1^1$, we have
$$\Capa (B(o,R_1^1)) \le \Capa (B(o,R_1^k)) \le \capa (R_1^k,R_2^k,o) \to 0.$$
Therefore, Corollary \ref{Dichotomy} implies that $\Capa (B_0)=0$.
\end{proof}

Combining this lemma with the results of the previous section, we obtain a comparison principle for
exterior problems where the operator involved is more general than the $p$-laplacian operator. Since the comparison principle is an important issue in the PDE theory, the following result that holds for a larger class of operators
might  be interesting by itself. 

\begin{theorem}
Let $M$ be a complete noncompact Riemannian manifold, $A$ be a function that
satisfies \eqref{AdeZeroeZero} - \eqref{UpperBoundForA} and $p > 1$. If $M$ is
$p$-parabolic, then the comparison principle holds for the exterior problem
\eqref{exteriorProblem}. \label{generalResultForP-parabolicManifold}
\end{theorem}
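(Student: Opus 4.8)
The plan is to reduce the statement to Corollary \ref{genericComparisonResultForBoundedSol}, extracting its hypothesis $\sup_k \mathrm{\;cap}_p\,(R_1^k,R_2^k) < +\infty$ from the $p$-parabolicity of $M$ via the capacity characterizations already established.

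First I would use Theorem \ref{gather}: since $M$ is $p$-parabolic, by the equivalence $(a)\Leftrightarrow(c)$ the $p$-capacity of some precompact open subset of $M$ vanishes, and then Corollary \ref{Dichotomy} lets me transfer this to a ball, so that $\mathrm{\;Cap}_p\,(B_0)=0$ for $B_0=B(o,R_0)$ (alternatively, use $(a)\Leftrightarrow(b)$ directly with a closed ball and the monotonicity in \eqref{relForCapacities}). Next I would apply Lemma \ref{SecondCharacterizationForP-parabolic} to this ball: from $\mathrm{\;Cap}_p\,(B_0)=0$ it furnishes sequences $R_1^k,R_2^k$ with $R_2^k>R_1^k\to+\infty$ and $\mathrm{\;cap}_p\,(R_1^k,R_2^k,o)\to 0$ as $k\to\infty$. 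A sequence converging to $0$ is bounded, so in particular $\sup_k \mathrm{\;cap}_p\,(R_1^k,R_2^k,o)<+\infty$, which is exactly the hypothesis of Corollary \ref{genericComparisonResultForBoundedSol}. Applying that corollary yields the comparison principle for the exterior problem \eqref{exteriorProblem}, which is the assertion.

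No genuine analytic difficulty remains at this stage: the substance is carried by Proposition \ref{LpBoundnessOfDerivativeOfSolutionMain} and Theorem \ref{GenericComparisonTheorem}. The only points requiring care are purely bookkeeping: confirming that the ``comparison principle for the exterior problem'' delivered by Corollary \ref{genericComparisonResultForBoundedSol} is precisely the conclusion we want (namely, for bounded sub- and supersolutions of \eqref{exteriorProblem} with $u\le v$ on $\partial K$ one has $u\le v$ in $M\setminus K$), and that the passage from a null sequence of capacities to a bounded one is legitimate.

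If one prefers to phrase the conclusion for sub- and supersolutions rather than for solutions, I would note that the energy identity \eqref{weakSolutionForUandV} is only ever used through its consequence \eqref{weakSolAppliedInATestFunc}, and that for the nonnegative test function $\varphi=\eta^p(v-u+\varepsilon_k)^-$ a subsolution $u$ and a supersolution $v$ of $\mathrm{div}\,\mathcal{A}(\nabla\,\cdot\,)=0$ give $\int_{M\setminus K}\nabla\varphi\cdot[\mathcal{A}(\nabla v)-\mathcal{A}(\nabla u)]\ge 0$ instead of $=0$; this only strengthens the inequality \eqref{weakSolAppliedInATestFunc} in the needed direction (the integrand vanishes on $F_k\setminus G_k$ anyway, where $\nabla u=\nabla v=0$), so the remainder of the proof of Theorem \ref{GenericComparisonTheorem}, and hence of Corollary \ref{genericComparisonResultForBoundedSol}, carries over without change. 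The expected ``hard part'' is therefore not the argument itself but simply verifying that this last observation, or the chosen reading of the comparison principle, is consistent with the hypotheses under which the previous section's results were proved.
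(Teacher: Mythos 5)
Your proposal is correct and follows essentially the same route as the paper: $p$-parabolicity gives $\Capa(B_0)=0$ for a ball containing $K$, Lemma \ref{SecondCharacterizationForP-parabolic} produces sequences with $\capa(R_1^k,R_2^k,o)\to 0$, hence bounded, and the conclusion follows from Theorem \ref{GenericComparisonTheorem} together with the boundedness of $u$ and $v$ (the paper invokes that theorem directly, you invoke its restatement, Corollary \ref{genericComparisonResultForBoundedSol} --- the same thing). Your closing aside on extending the argument to sub- and supersolutions is not needed for the statement and is slightly optimistic, since the weak formulation also enters through Proposition \ref{LpBoundnessOfDerivativeOfSolutionMain}, whose test function $\eta^p(u-I)$ is not sign-definite, but this does not affect the correctness of your main argument.
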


\begin{proof}
Let $u,v \in C(\overline{M \backslash K}) \cap C^1(M \backslash K)$ be bounded weak solutions of \eqref{exteriorProblem} such that $u \le v$ on $\partial K$, where $K \subset M$ is a compact set. Consider a ball $B_0=B(o,R_0)$ such that $K \subset \subset B_0$. Then $\Capa (B_0)=0$, since $M$ is $p$-parabolic. Therefore, Lemma \ref{SecondCharacterizationForP-parabolic} implies that there exist two sequences $R_1^k$ and $R_2^k$ such that $R_2^k > R_1^k \to +\infty$ and $\capa (R_1^k,R_2^k,o) \to 0$. Hence, using that $u$ and $v$ are bounded, we have that
$$ \max_{\mathpzc{A}_{R_1^k, R_2^k}} |u|^p  \capa (R_1^k,R_2^k) \quad {\rm and} \quad  \max_{\mathpzc{A}_{R_1^k, R_2^k}} |v|^p  \capa (R_1^k,R_2^k) $$
are bounded. Then, from Theorem \ref{GenericComparisonTheorem} we conclude that $u \le v$ in $M \backslash K$.
\end{proof}

Now we obtain the equivalence between (a) and (b) of Theorem \ref{MT} in the following result:

\begin{theorem}
Let $M$ be a complete noncompact Riemannian manifold and $p > 1$. The following are equivalents 
\\(i) $M$ is $p$-parabolic;
\\(ii) the comparison principle holds for the exterior
problem 
\begin{equation}
\Delta_{p} v = 0 \quad\mathrm{in} \quad M \backslash K,\label{p-harmonicInM-K}%
\end{equation}
for every compact set $K$ of $M$;
\\(iii) the comparison principle holds for the exterior
problem \eqref{p-harmonicInM-K} for some compact set $K_0=\overline{U}_0$, where $U_0\ne \emptyset$ is some bounded open set with $C^2$ boundary.
\label{p-harmonicResultForExteriorProb}
\end{theorem}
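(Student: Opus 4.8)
The plan is to prove the chain of implications $(i)\Rightarrow(ii)\Rightarrow(iii)\Rightarrow(i)$. For $(i)\Rightarrow(ii)$ I would simply specialize Theorem \ref{generalResultForP-parabolicManifold}: with the choice $A(s)=s^{p-1}$ the operator in \eqref{exteriorProblem} is exactly $\mathrm{div}(|\nabla u|^{p-2}\nabla u)=\Delta_p u$, and $A(s)=s^{p-1}$ satisfies \eqref{AdeZeroeZero}--\eqref{UpperBoundForA}; indeed $A\in C[0,+\infty)\cap C^1(0,+\infty)$, $A(0)=0<A(s)$ for $s>0$, the identity $sA'(s)=(p-1)A(s)$ gives \eqref{AprimeHasSomeGrowth} and \eqref{AprimeHasSomeGrowthBounded} with $\alpha=\min\{1,p-1\}\in(0,1]$ and $\beta=\max\{1,p-1\}\ge 1$, and \eqref{LowerBoundForA}--\eqref{UpperBoundForA} hold with $D_1=D_2=1$. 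Thus, if $M$ is $p$-parabolic, Theorem \ref{generalResultForP-parabolicManifold} applied to this $A$ yields the comparison principle for \eqref{p-harmonicInM-K} on every exterior domain $M\backslash K$, i.e. $(ii)$. The implication $(ii)\Rightarrow(iii)$ is immediate: $(iii)$ is $(ii)$ restricted to a single compact set of the form $K_0=\overline{U}_0$, and such sets exist (e.g. a small closed geodesic ball, whose boundary is smooth).

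For $(iii)\Rightarrow(i)$ I would argue by contraposition: assume $M$ is not $p$-parabolic and show that the comparison principle fails for every $K_0$ as in $(iii)$. Since $M$ is not $p$-parabolic, the equivalence $(a)\Leftrightarrow(c)$ in Theorem \ref{gather} (equivalently, the Dichotomy of Corollary \ref{Dichotomy}) gives $\mathrm{\; Cap}_p\,(U_0)>0$ for every precompact open $U_0$, in particular for the one appearing in $(iii)$. As $U_0$ is a bounded domain with $C^2$ boundary, Lemma \ref{existenceOfNonConstantp-harmonic} provides a non-constant $p$-harmonic function $u$ with $u=1$ on $\partial U_0$ and $0<u<1$ in $M\backslash\overline{U}_0$; moreover, as in the proof of that lemma (the approximants are continuous up to $\partial U_0$ with value $1$ and $u_1\le u\le 1$ near $\partial U_0$), $u$ extends continuously to $\overline{M\backslash K_0}$ with $u=1$ on $\partial K_0$, so that $u\in C(\overline{M\backslash K_0})\cap C^1(M\backslash K_0)$ and $u$ is bounded. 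Now I would compare $u$ with the constant function $1$ on $\Omega=M\backslash K_0$: both are bounded weak solutions of $\Delta_p=0$ in $\Omega$, hence $1$ is in particular a subsolution and $u$ a supersolution, and $1|_{\partial\Omega}=u|_{\partial\Omega}$, so the hypothesis $1|_{\partial\Omega}\le u|_{\partial\Omega}$ of the $p$-comparison principle holds. Were $(iii)$ true, we would obtain $1\le u$ in all of $M\backslash K_0$, contradicting $0<u<1$ in $M\backslash\overline{U}_0$. Hence the comparison principle fails, which is what the contrapositive requires.

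The bulk of the work has already been carried out in the previous sections: Theorem \ref{generalResultForP-parabolicManifold} handles $(i)\Rightarrow(ii)$, and Lemma \ref{existenceOfNonConstantp-harmonic} — the existence of a non-constant bounded $p$-harmonic function with prescribed boundary value $1$ but staying strictly below $1$ outside — is precisely the object that obstructs the comparison principle against the constant $1$. What remains to be checked carefully here, and is the only place a slip could occur, is twofold: that $A(s)=s^{p-1}$ genuinely meets the structural conditions for all $p>1$ (with the correct choice of $\alpha,\beta$), and that the potential $u$ from Lemma \ref{existenceOfNonConstantp-harmonic} lies in the regularity class $C(\overline{M\backslash K_0})\cap C^1(M\backslash K_0)$ on which the comparison principle in $(iii)$ is phrased, so that the derived contradiction is legitimate.
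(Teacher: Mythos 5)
Your proposal is correct and follows essentially the same route as the paper: $(i)\Rightarrow(ii)$ via Theorem \ref{generalResultForP-parabolicManifold} (with the routine check that $A(s)=s^{p-1}$ satisfies \eqref{AdeZeroeZero}--\eqref{UpperBoundForA}), $(ii)\Rightarrow(iii)$ trivially, and $(iii)\Rightarrow(i)$ by contraposition using Corollary \ref{Dichotomy} and the non-constant $p$-harmonic potential of Lemma \ref{existenceOfNonConstantp-harmonic} compared against the constant $1$. Your added attention to the boundary regularity of that potential is a reasonable refinement of the paper's argument, not a deviation from it.
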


\begin{proof} 
$(iii) \Rightarrow (i):$ Suppose that $M$ is not $p$-parabolic. Then, $\Capa (E) > 0$ for some compact set $E$. Since, we are assuming $(iii)$, the comparison principle holds for some compact $K_0=\overline{U}_0$, where $U_0\ne \emptyset$ is open. Then, Corollary \ref{Dichotomy} implies that $\Capa (K_0) > 0$. Remind also that $\partial U_0$ is $C^2$. Therefore, from Lemma \ref{existenceOfNonConstantp-harmonic}, there exists a $p$-harmonic function $w$ such that $w=1$ on $\partial K_0$ and $0 < w < 1$ in $M \backslash K_0$. Let $u$ be defined by $u=1$ in $M$. Thus $u$ and $w$ are bounded $p$-harmonic functions, $u \le w$ on $\partial K_0$, but $w < 1 = u$ in $M \backslash K_0$. That is, the comparison principle does not hold for \eqref{p-harmonicInM-K} for $K_0=\overline{U}_0$, contradicting the hypothesis.
Hence $M$ is $p$-parabolic.

$(i) \Rightarrow (ii):$ Assuming that $M$ is $p$-parabolic, the comparison principle for the exterior domains with the $p$-laplacian operator is a consequence of Theorem \ref{generalResultForP-parabolicManifold}. 

$(ii) \Rightarrow (iii):$ Trivial.
\end{proof}

\

 From the previous results we obtain the following
property about the $p$-capacity, that corresponds the equivalence between (a) and (c) of Theorem \ref{MT}:

\begin{corollary}
Let $M$ be a complete noncompact Riemannian manifold and $p > 1$. There exist
two sequences $R_{1}^{k}$ and $R_{2}^{k}$ such that $R_{2}^{k} > R_{1}^{k}
\to+\infty$ and
\[
\mathrm{\; cap}_{p} \, (R_{1}^{k},R_{2}^{k},o) \to0
\]
if and only if there exist two sequences $\tilde{R}_{1}^{k}$ and $\tilde
{R}_{2}^{k} $ such that $\tilde{R}_{2}^{k} > \tilde{R}_{1}^{k} \to+\infty$
and
\[
\sup_{k} \mathrm{\; cap}_{p} \, (\tilde{R}_{1}^{k},\tilde{R}_{2}^{k},o) <
+\infty.
\]
Moreover, $M$ is $p$-parabolic if and only if some of these two conditions holds.
\end{corollary}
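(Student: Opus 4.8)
The plan is to prove the chain of implications \emph{(A)} $\Rightarrow$ \emph{(B)} $\Rightarrow$ ($M$ is $p$-parabolic) $\Rightarrow$ \emph{(A)}, where \emph{(A)} abbreviates ``there exist $R_1^k<R_2^k\to+\infty$ with $\mathrm{cap}_p(R_1^k,R_2^k,o)\to0$'' and \emph{(B)} abbreviates ``there exist $\tilde R_1^k<\tilde R_2^k\to+\infty$ with $\sup_k\mathrm{cap}_p(\tilde R_1^k,\tilde R_2^k,o)<+\infty$''. Running this single cycle yields at once the stated equivalence of \emph{(A)} and \emph{(B)} and the final ``moreover'' assertion.

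The implication \emph{(A)} $\Rightarrow$ \emph{(B)} is immediate, since a convergent sequence of real numbers is bounded (take $\tilde R_i^k=R_i^k$). For \emph{(B)} $\Rightarrow$ ($M$ is $p$-parabolic), I would apply Corollary \ref{genericComparisonResultForBoundedSol} with $A(s)=s^{p-1}$. This $A$ satisfies \eqref{AdeZeroeZero}--\eqref{UpperBoundForA} (for instance with $\alpha=\min\{1,p-1\}$, $\beta=\max\{1,p-1\}$, $D_1=D_2=1$), and for this choice the operator in \eqref{exteriorProblem} is exactly $\Delta_p$, since $\mathcal{A}(\vetor{v})=|\vetor{v}|^{p-2}\vetor{v}$. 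As \emph{(B)} holds, Corollary \ref{genericComparisonResultForBoundedSol} gives that the comparison principle holds for the exterior problem \eqref{p-harmonicInM-K} for every compact $K$, in particular for some $K_0=\overline{U}_0$ with $U_0$ a nonempty bounded open set with $C^2$ boundary (such $U_0$ exists, e.g.\ a geodesic ball of radius less than the injectivity radius at $o$, or any smooth domain). This is precisely condition \emph{(iii)} of Theorem \ref{p-harmonicResultForExteriorProb}, hence $M$ is $p$-parabolic.

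For the last implication, assume $M$ is $p$-parabolic. By Theorem \ref{gather}, (a) $\Rightarrow$ (b), the $p$-capacity of every compact subset of $M$ vanishes; since $M$ is complete, $\overline{B(o,R_0)}$ is compact, so $\mathrm{Cap}_p(\overline{B(o,R_0)})=0$ and, by \eqref{relForCapacities}, also $\mathrm{Cap}_p(B(o,R_0))=0$. Lemma \ref{SecondCharacterizationForP-parabolic} then produces sequences $R_1^k<R_2^k\to+\infty$ with $\mathrm{cap}_p(R_1^k,R_2^k,o)\to0$, which is \emph{(A)}. Having closed the cycle, \emph{(A)}, \emph{(B)} and $p$-parabolicity of $M$ are pairwise equivalent, which is the whole statement.

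I do not expect a serious analytic obstacle: essentially all the work has been done already in Proposition \ref{LpBoundnessOfDerivativeOfSolutionMain}, Theorem \ref{GenericComparisonTheorem} and Lemma \ref{SecondCharacterizationForP-parabolic}. The one point that needs care is that one must \emph{not} simply quote Theorem \ref{MT}(c) for the passage from the capacity bound in \emph{(B)} to $p$-parabolicity, as that equivalence is precisely what this corollary is meant to establish; instead the bound has to be fed into the comparison machinery (Corollary \ref{genericComparisonResultForBoundedSol}) and then into Theorem \ref{p-harmonicResultForExteriorProb}. A secondary, purely routine, point is exhibiting a bounded $C^2$ domain on $M$ to serve as $K_0$ in condition \emph{(iii)}.
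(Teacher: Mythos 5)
Your proposal is correct and follows essentially the same route as the paper: the trivial implication from convergence to boundedness, then Corollary \ref{genericComparisonResultForBoundedSol} (specialized to $A(s)=s^{p-1}$) combined with Theorem \ref{p-harmonicResultForExteriorProb} to pass from the bounded-capacity condition to $p$-parabolicity, and Lemma \ref{SecondCharacterizationForP-parabolic} to close the cycle. The extra care you take in verifying the structure conditions for $A(s)=s^{p-1}$ and in avoiding a circular appeal to Theorem \ref{MT}(c) is sound but does not change the argument.
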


\begin{proof}

If $M$ is $p$-parabolic, then Lemma \ref{SecondCharacterizationForP-parabolic} guarantees that there exist two sequences $R_1^k$ and $R_2^k$ such that $R_2^k > R_1^k \to +\infty$ and $\capa (R_1^k,R_2^k,o) \to 0$. Therefore, it is trivial that $\sup_k \capa (R_1^k,R_2^k,o) < +\infty.$

Hence, if M is $p-$parabolic then the first condition holds which implies the second one.

Now suppose that
$$\sup_k \capa (\tilde{R}_1^k,\tilde{R}_2^k,o) < +\infty.$$
Hence, from Corollary \ref{genericComparisonResultForBoundedSol}, the comparison principle holds for the exterior problem \eqref{p-harmonicInM-K}. Then, Theorem \ref{p-harmonicResultForExteriorProb} implies that $M$ is $p$-parabolic, concluding the proof.
\end{proof}

\section{Comparison Principle under volume growth conditions}

In \cite{H2}, Holopainen proves that $M$ is $p$-parabolic assuming the following condition on the volume growth of geodesic balls: 
\begin{equation}
 \int^{+\infty} \left( \frac{R}{V(R)} \right)^{\frac{1}{p-1}} dR = +\infty \quad {\rm or} \quad \int^{+\infty} \left( \frac{1}{V'(R)} \right)^{\frac{1}{p-1}} dR = +\infty,
 \label{VolumeGrowthIlkkaCond}
\end{equation}
for $p > 1$, where $V(R) = {\rm Vol} ( B(o,R))$. Therefore, according to Theorem \ref{MT}, the $p$-comparison principle for exterior domains in $M$ holds if those conditions are satisfied.

In this section we follow a different kind of assumptions, using Theorem \ref{GenericComparisonTheorem} to obtain comparison principles. One advantage is that we can prove results even for solutions that are not bounded a priori and for manifolds that are not $p$-parabolic, provided there is some relation between the growth of the solution and the volume of the geodesic balls. 

We have to observe the following:

\begin{remark}\label {R}
For $R_{2} > R_{1} > 0$ and $o \in M$, there exists a function
\[
w_{0} \in\mathcal{F}_{B(o,R_{1}),B(o,R_{2})}%
\]
such that $|\nabla w_{0}(x)| \le2 / (R_{2}-R_{1})$ for any $x \in
\mathcal{A}_{R_{1},R_{2}} = B(o,R_{2}) \backslash\overline{B(o,R_{1})}$. For
instance, take $w_{0}$ as a mollification of
\[
w_{1}(x) = \left\{
\begin{array}
[c]{rcl}%
1 & if & dist(x,o) \le R_{1}\\[10pt]%
\displaystyle \frac{R_{2} - dist(x,o)}{R_{2}-R_{1}} & if & R_{1} < dist(x,o) <
R_{2}\\[10pt]%
0 & if & dist(x,o) \ge R_{2}.
\end{array}
\right.
\]
Then
\begin{equation}
\mathrm{\; cap}_{p} \, (R_{1},R_{2},o) \le\int_{\mathpzc{A}_{R_{1},R_{2}}}
|\nabla w_{0}|^{p} dx \le\left(  \frac{2}{R_{2}-R_{1}} \right) ^{p}
\text{Vol}(\mathcal{A}_{R_{1},R_{2}}).\label{capacityBoundedByVolume}%
\end{equation}
\label{limitationOfPhiByVolume}
\end{remark}

As a consequence of Remark \ref{R} and Theorem \ref{GenericComparisonTheorem}, we have the following result that can be applied for possibly unbounded solutions or non $p$-parabolic manifolds:

\begin{theorem}
Let $u,v \in C(\overline{M \backslash K}) \cap C^1(M \backslash K)$ be weak solutions of \eqref{exteriorProblem} in $M\backslash K$, where $A$ satisfies \eqref{AdeZeroeZero} - \eqref{UpperBoundForA} and $p >1$. Assume also that 
\begin{equation} \max_{\mathpzc{A}_{R_1^k, R_2^k}} |u|^p \;  \frac{\text{Vol}(\mathcal{A}_{R_1^k,R_2^k})}{(R_2^k-R_1^k)^p} \quad \quad {\rm and} \quad \quad \max_{\mathpzc{A}_{R_1^k, R_2^k}} |v|^p \;  \frac{\text{Vol}(\mathcal{A}_{R_1^k,R_2^k})}{(R_2^k-R_1^k)^p}
\label{boundedCapacityTimesSolutionHypotheses}
\end{equation}
are bounded sequences for some $(R_1^k)$ and $(R_2^k)$ such that $R_2^k > R_1^k \to \infty$.  
\\ If $u \le v$ on $\partial K$, then
$$ u \le v \quad \text{in} \quad M \backslash K.$$
\label{GenTheorForCompWithVolGrowth}
\end{theorem}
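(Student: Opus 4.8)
The plan is to deduce Theorem~\ref{GenTheorForCompWithVolGrowth} directly from Theorem~\ref{GenericComparisonTheorem} by using Remark~\ref{R} to bound the $p$-capacity of the annuli by the volume quotient. Recall that Theorem~\ref{GenericComparisonTheorem} asserts the comparison conclusion $u \le v$ in $M\backslash K$ once one knows that the two quantities
\[
\sup_k \left(\max_{\mathpzc{A}_{R_1^k,R_2^k}}|u|\right)^p \mathrm{\;cap}_p\,(R_1^k,R_2^k) \qquad \text{and} \qquad \sup_k \left(\max_{\mathpzc{A}_{R_1^k,R_2^k}}|v|\right)^p \mathrm{\;cap}_p\,(R_1^k,R_2^k)
\]
are finite, together with $u \le v$ on $\partial K$. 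So the whole task reduces to checking that the hypotheses \eqref{boundedCapacityTimesSolutionHypotheses} imply these two suprema are finite.

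The key step is the estimate \eqref{capacityBoundedByVolume} from Remark~\ref{R}: for any $o\in M$ and $R_2 > R_1 > 0$,
\[
\mathrm{\;cap}_p\,(R_1,R_2,o) \le \left(\frac{2}{R_2-R_1}\right)^p \text{Vol}(\mathcal{A}_{R_1,R_2}).
\]
Applying this with $R_1 = R_1^k$ and $R_2 = R_2^k$ and multiplying by $\left(\max_{\mathpzc{A}_{R_1^k,R_2^k}}|u|\right)^p$ gives
\[
\left(\max_{\mathpzc{A}_{R_1^k,R_2^k}}|u|\right)^p \mathrm{\;cap}_p\,(R_1^k,R_2^k) \le 2^p \, \max_{\mathpzc{A}_{R_1^k,R_2^k}}|u|^p \;\frac{\text{Vol}(\mathcal{A}_{R_1^k,R_2^k})}{(R_2^k-R_1^k)^p},
\]
and the right-hand side is a bounded sequence in $k$ precisely by the first hypothesis in \eqref{boundedCapacityTimesSolutionHypotheses}. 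The identical estimate with $v$ in place of $u$ handles the second quantity. Hence both suprema appearing in \eqref{boundedCapacityHypotheses} are finite.

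With \eqref{boundedCapacityHypotheses} verified for the given sequences $(R_1^k),(R_2^k)$, and with $u \le v$ on $\partial K$ by assumption, Theorem~\ref{GenericComparisonTheorem} applies verbatim and yields $u \le v$ in $M\backslash K$, which is the desired conclusion. I do not anticipate a genuine obstacle here: the proof is a one-line reduction, the only thing to be careful about being the constant $2^p$ (harmless since Theorem~\ref{GenericComparisonTheorem} only needs boundedness, not smallness) and the bookkeeping that the same increasing sequences $R_2^k > R_1^k \to \infty$ are used throughout, which they are. The substantive work — the Caccioppoli-type estimate on $\int \eta^p(|\nabla v|+|\nabla u|)^{p-2}|\nabla(v-u)|^2$, the passage to the limit as $\eta_{R_1^k,R_2^k}\to 1$, and the conclusion $\nabla(v-u)^-=0$ — has already been carried out in the proof of Theorem~\ref{GenericComparisonTheorem} and in Proposition~\ref{LpBoundnessOfDerivativeOfSolutionMain}, so nothing new is required beyond invoking those.
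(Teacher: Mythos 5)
Your proposal is correct and is exactly the argument the paper intends: the theorem is stated as ``a consequence of Remark~\ref{R} and Theorem~\ref{GenericComparisonTheorem},'' and your verification that \eqref{capacityBoundedByVolume} converts the volume-quotient hypotheses \eqref{boundedCapacityTimesSolutionHypotheses} into the capacity hypotheses \eqref{boundedCapacityHypotheses} (up to the harmless factor $2^p$) is precisely that reduction.
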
 
\begin{corollary}
Assume the same hypotheses as in the previous theorem. Then the solutions $u$ and $v$ are bounded.
\label{BoundednessOfSolutions}
\end{corollary}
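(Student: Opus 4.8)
The plan is to argue by contradiction, using the conclusion of Theorem \ref{GenTheorForCompWithVolGrowth} itself together with a comparison against an explicit bounded solution to trap $u$ and $v$ between two finite bounds. More precisely, suppose $u$ is unbounded (the argument for $v$ is identical). Since the hypotheses of Theorem \ref{GenTheorForCompWithVolGrowth} are symmetric in $u$ and in $-u$ — note that $-u$ is again a weak solution of \eqref{exteriorProblem} because $\mathcal{A}$ is odd, and $|-u|=|u|$ so \eqref{boundedCapacityTimesSolutionHypotheses} is unchanged — it suffices to produce, for a suitable constant $m$, a contradiction with $u\le m$ and with $-u\le m$ on $M\backslash K$. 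The idea is that the growth hypothesis \eqref{boundedCapacityTimesSolutionHypotheses}, being a genuine restriction, already forces $u$ to be comparable to a constant near infinity along the annuli $\mathpzc{A}_{R_1^k,R_2^k}$.

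The key step: compare $u$ with the constant function $w\equiv m$, where $m:=\max_{\partial K}u$. Both $u$ and $w$ are weak solutions of \eqref{exteriorProblem} in $M\backslash K$ (constants solve it trivially since $\mathcal{A}(0)=0$), and $u\le w$ on $\partial K$ by choice of $m$. The constant $w$ certainly satisfies \eqref{boundedCapacityTimesSolutionHypotheses} as long as $\mathrm{Vol}(\mathcal{A}_{R_1^k,R_2^k})/(R_2^k-R_1^k)^p$ is itself a bounded sequence — but that is exactly what the hypothesis on $u$ gives once we know $u$ is, say, bounded below on each annulus, which it is (it is continuous and the annuli are precompact). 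Hmm — more carefully, the hypothesis on $u$ only controls $\max|u|^p$ times that volume quantity, so if $\max_{\mathpzc{A}_{R_1^k,R_2^k}}|u|$ happens to be bounded away from zero (which it is for large $k$ once $u\not\equiv 0$), then $\mathrm{Vol}(\mathcal{A}_{R_1^k,R_2^k})/(R_2^k-R_1^k)^p$ is forced to be bounded, hence the constant $w$ also satisfies \eqref{boundedCapacityTimesSolutionHypotheses}. Applying Theorem \ref{GenTheorForCompWithVolGrowth} to the pair $(u,w)$ then yields $u\le m$ in $M\backslash K$, and applying it to $(-u,w')$ with $w'\equiv\max_{\partial K}(-u)$ yields $-u\le \max_{\partial K}(-u)$, i.e. $u\ge -\max_{\partial K}(-u)$. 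Together these bound $u$, contradicting unboundedness; the same argument bounds $v$.

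The main obstacle I anticipate is the degenerate case $u\equiv 0$ (or $u$ identically a constant), where $\max_{\mathpzc{A}_{R_1^k,R_2^k}}|u|$ can be zero and the hypothesis \eqref{boundedCapacityTimesSolutionHypotheses} gives no information on the volume ratio, so the comparison function $w$ might not satisfy the hypotheses of the theorem. But in that case $u$ is trivially bounded, so there is nothing to prove; one simply separates the argument into the case where $u$ (resp. $v$) is eventually bounded away from a constant along the annuli and the case where it is not. A secondary subtlety is ensuring the comparison function is continuous up to $\partial K$ and lies in the right regularity class $C(\overline{M\backslash K})\cap C^1(M\backslash K)$, which is immediate for constants. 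I would also remark that this corollary shows the hypotheses of Theorem \ref{GenTheorForCompWithVolGrowth} are, a posteriori, only ever satisfied by bounded solutions — so in the end it recovers a comparison statement in the same spirit as Corollary \ref{genericComparisonResultForBoundedSol}, but reached without assuming boundedness in advance.
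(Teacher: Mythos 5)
Your first case is essentially the paper's: when the volume ratio $Q_k=\mathrm{Vol}(\mathcal{A}_{R_1^k,R_2^k})/(R_2^k-R_1^k)^p$ is bounded, the constant $w=\max_{\partial K}v$ (resp.\ $\min_{\partial K}v$, and likewise for $u$) satisfies \eqref{boundedCapacityTimesSolutionHypotheses} and Theorem \ref{GenTheorForCompWithVolGrowth} traps the solution between two constants. However, your treatment of the complementary case has a genuine gap. You assert that $\max_{\mathpzc{A}_{R_1^k,R_2^k}}|u|$ is bounded away from zero for large $k$ ``once $u\not\equiv 0$'', and you locate the only problematic situation at $u\equiv 0$ or $u$ constant. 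That is false: the annuli escape to infinity, so a non-constant $u$ decaying at infinity can perfectly well have $\max_{\mathpzc{A}_{R_1^k,R_2^k}}|u|\to 0$ along a subsequence while $Q_k\to+\infty$, and then the constant comparison function $w=\max_{\partial K}u$ (generically nonzero) fails hypothesis \eqref{boundedCapacityTimesSolutionHypotheses}, so the theorem cannot be applied to the pair $(u,w)$.

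Moreover, in that remaining case the conclusion is not ``trivial'' as you claim. Knowing that $|u|\le C^{1/p}$ on a subsequence of annuli says nothing, by itself, about the size of $u$ in the regions between consecutive annuli. The paper closes this by invoking the maximum principle on the bounded domains $B(o,R_1^{k_j})\backslash K$: since $|u|\le C^{1/p}$ on the inner spheres $\partial B(o,R_1^{k_j})$ and $|u|\le\max_{\partial K}|u|$ on $\partial K$, one gets $|u|\le D:=\max\{C^{1/p},\max_{\partial K}|u|\}$ on each $B(o,R_1^{k_j})\backslash K$, and letting $j\to\infty$ gives the global bound. Your proposal is missing exactly this step. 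To repair it, replace your dichotomy (``$u$ constant or not'') by the paper's dichotomy on whether $Q_k$ is bounded, and in the unbounded case derive $\max_{\mathpzc{A}_{R_1^{k_j},R_2^{k_j}}}|u|\to 0$ from \eqref{boundedCapacityTimesSolutionHypotheses} and then apply the maximum principle on the exhaustion as above. The contradiction framing and the observation that $-u$ is again a solution are harmless but unnecessary.
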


\begin{proof}
\underline{First Case:} The sequence of quotients
\begin{equation}
Q_k:=\frac{\text{Vol}(\mathcal{A}_{R_1^k,R_2^k})}{(R_2^k-R_1^k)^p} 
\label{boundedCapacityWITHOUTSolutionHypotheses}
\end{equation}
is also bounded for the same $(R_1^k)$ and $(R_2^k)$ given by Theorem \ref{GenTheorForCompWithVolGrowth}.
\\ Observe that the constant function $w=\displaystyle \max_{\partial K} v$ is a solution \eqref{exteriorProblem}.
Since the sequence in \eqref{boundedCapacityWITHOUTSolutionHypotheses} is bounded, then the sequence of \eqref{boundedCapacityTimesSolutionHypotheses} with $u$ replaced by $w$ is also bounded. Then we can apply the previous theorem to $w$ and $v$. Hence $v \le w = \displaystyle \max_{\partial K} v < +\infty$. Similarly, $v$ is bounded from below by $\displaystyle \min_{\partial K} v$. The argument for $u$ is the same.
\\ \underline{Second case:} The sequence of ratios $Q_k$ is not bounded. 
\\ Then there exists some subsequence $Q_{k_j}$ such that $Q_{k_j} \to +\infty$.
Hence, from the boundedness of the sequences given in \eqref{boundedCapacityTimesSolutionHypotheses}, we have
$$ \max_{\mathpzc{A}_{R_1^{k_j}, R_2^{k_j}}} |u|^p \to 0 .$$
In particular, this sequence is bounded by some constant $C > 0$. Therefore, $|u| \le C^{1/p}$ on the spheres $\partial B(o,R_1^{k_j})$ for any $j$. Since $u$ satisfies the maximum principle, we get
$$ |u| \le D:=\max\{ C^{1/p}, \max_{\partial K} |u| \} < +\infty \quad {\rm in} \quad B(o,R_1^{k_j}) \backslash K \quad \text{for any } j.$$
Using that $R_1^{k_j} \to +\infty$, we conclude that $|u| \le D$ in $M\backslash K$. The same holds for $v$.
\end{proof}

\begin{corollary}
Assume the same hypotheses as in Theorem \ref{GenTheorForCompWithVolGrowth}. If $u=v$ on $\partial
K$, then $u=v$ in $M \backslash K$. Moreover $u$ is bounded.
\end{corollary}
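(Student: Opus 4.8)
The plan is to derive this final corollary as a direct consequence of Theorem~\ref{GenTheorForCompWithVolGrowth} together with Corollary~\ref{BoundednessOfSolutions}. Suppose $u,v \in C(\overline{M\backslash K}) \cap C^1(M\backslash K)$ are weak solutions of \eqref{exteriorProblem} satisfying the hypothesis \eqref{boundedCapacityTimesSolutionHypotheses}, and that $u=v$ on $\partial K$. In particular $u\le v$ on $\partial K$, so Theorem~\ref{GenTheorForCompWithVolGrowth} applies verbatim and yields $u\le v$ in $M\backslash K$. For the reverse inequality, I would simply swap the roles of $u$ and $v$: the hypothesis \eqref{boundedCapacityTimesSolutionHypotheses} is symmetric in $u$ and $v$, and $v=u$ on $\partial K$ gives $v\le u$ on $\partial K$, so Theorem~\ref{GenTheorForCompWithVolGrowth} again applies (with $u$ and $v$ interchanged) to give $v\le u$ in $M\backslash K$. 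Combining the two inequalities yields $u=v$ in $M\backslash K$.

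For the boundedness claim, I would invoke Corollary~\ref{BoundednessOfSolutions}, whose hypotheses coincide exactly with those of Theorem~\ref{GenTheorForCompWithVolGrowth}: it asserts that any solution satisfying \eqref{boundedCapacityTimesSolutionHypotheses} is automatically bounded. Applying it to $u$ (and equally to $v$, which equals $u$ anyway) gives that $u$ is bounded in $M\backslash K$, completing the proof.

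I do not anticipate any real obstacle here, since both ingredients are already established earlier in the excerpt; the only thing to be careful about is to note explicitly that the growth condition \eqref{boundedCapacityTimesSolutionHypotheses} is unchanged under interchanging $u$ and $v$, so that Theorem~\ref{GenTheorForCompWithVolGrowth} may be applied in both directions. Concretely the write-up is:

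\begin{proof}
Since $u=v$ on $\partial K$, in particular $u \le v$ on $\partial K$, and Theorem~\ref{GenTheorForCompWithVolGrowth} gives $u \le v$ in $M \backslash K$. The hypothesis \eqref{boundedCapacityTimesSolutionHypotheses} is symmetric in $u$ and $v$, and $v \le u$ on $\partial K$; applying Theorem~\ref{GenTheorForCompWithVolGrowth} with the roles of $u$ and $v$ interchanged yields $v \le u$ in $M \backslash K$. Hence $u = v$ in $M \backslash K$. Finally, Corollary~\ref{BoundednessOfSolutions} applies under exactly the hypotheses assumed here, so $u$ is bounded.
\end{proof}
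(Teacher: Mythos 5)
Your proof is correct and is exactly the argument the paper intends (the paper leaves this corollary without an explicit proof): apply Theorem~\ref{GenTheorForCompWithVolGrowth} in both directions, using the symmetry of hypothesis \eqref{boundedCapacityTimesSolutionHypotheses} in $u$ and $v$, and then invoke Corollary~\ref{BoundednessOfSolutions} for boundedness. No issues.
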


Observe that condition \eqref{boundedCapacityTimesSolutionHypotheses} holds, for example, if there exist $C > 0$, $q >0$ and some sequence $R_2^k \to +\infty$ such that 
$$
 \max_{B(o,R_2^k)} |u| \le C (R_2^k)^{(p-q)/p}  \quad , \quad \max_{B(o,R_2^k)} |v| \le C (R_2^k)^{(p-q)/p} 
$$ 
and
$$
\text{Vol}(B(o,R_2^k)) \le C (R_2^k)^q.
$$
In particular, condition \eqref{boundedCapacityTimesSolutionHypotheses} is satisfied if we assume that
\begin{equation}
 \max_{B(o,R)} |u|, \max_{B(o,R)} |v| \le C R^{(p-q)/p}  \quad {\rm and } \quad \text{Vol}(B(o,R)) \le C R^q, 
\label{potencilBoundOfSolAndPowerGrowth}
\end{equation}
for any $R \ge R_0$, where $R_0 >0$ is any fixed positive. Hence we have the following result:

\begin{corollary}
Let $u,v \in C(\overline{M \backslash K}) \cap C^1(M \backslash K)$ be weak solutions of \eqref{exteriorProblem} in $M\backslash K$, where $A$ satisfies \eqref{AdeZeroeZero} - \eqref{UpperBoundForA} and $p >1$. If \eqref{potencilBoundOfSolAndPowerGrowth} holds and $u \le v$ on $\partial K$, then $u \le v$ in $M\backslash K$. Moreover $u$ and $v$ are bounded. 
\end{corollary}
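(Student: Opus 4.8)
The plan is to reduce the last corollary directly to Theorem \ref{GenTheorForCompWithVolGrowth} by producing, from the hypothesis \eqref{potencilBoundOfSolAndPowerGrowth}, a single sequence of radii $R_1^k < R_2^k \to \infty$ along which the two quantities in \eqref{boundedCapacityTimesSolutionHypotheses} are bounded. First I would fix $o \in M$ and the constant $R_0 > 0$ from \eqref{potencilBoundOfSolAndPowerGrowth}, and consider dyadic radii: set $R_1^k = 2^k R_0$ and $R_2^k = 2^{k+1} R_0$ for $k$ large enough that $R_1^k > R_0$ (and that $B(o,R_1^k) \supset K$, which is automatic once $2^k R_0$ exceeds the diameter data of $K$). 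Then $R_2^k - R_1^k = 2^k R_0$, so $(R_2^k - R_1^k)^p = (2^k R_0)^p$, and by monotonicity of volume together with the polynomial bound $\text{Vol}(B(o,R)) \le C R^q$ we get $\text{Vol}(\mathcal{A}_{R_1^k,R_2^k}) \le \text{Vol}(B(o,R_2^k)) \le C (2^{k+1} R_0)^q = C\, 2^q\, (2^k R_0)^q$.

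Next I would bound the solution factors. On the annulus $\mathpzc{A}_{R_1^k,R_2^k} \subset B(o,R_2^k)$, the hypothesis gives $\max_{\mathpzc{A}_{R_1^k,R_2^k}} |u| \le \max_{B(o,R_2^k)} |u| \le C (R_2^k)^{(p-q)/p} = C (2^{k+1}R_0)^{(p-q)/p}$, hence $\max_{\mathpzc{A}_{R_1^k,R_2^k}} |u|^p \le C^p (2^{k+1}R_0)^{p-q}$. Multiplying by the volume-over-gap quotient:
\[
\max_{\mathpzc{A}_{R_1^k,R_2^k}} |u|^p \cdot \frac{\text{Vol}(\mathcal{A}_{R_1^k,R_2^k})}{(R_2^k-R_1^k)^p} \le C^p (2^{k+1}R_0)^{p-q} \cdot \frac{C\,2^q (2^k R_0)^q}{(2^k R_0)^p}.
\]
The powers of $2^k$ combine as $(2^k)^{p-q} \cdot (2^k)^q \cdot (2^k)^{-p} = (2^k)^0 = 1$, and similarly all the $R_0$ powers cancel, leaving a bound independent of $k$; the same estimate holds verbatim for $v$. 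So both sequences in \eqref{boundedCapacityTimesSolutionHypotheses} are bounded, and Theorem \ref{GenTheorForCompWithVolGrowth} applies to yield $u \le v$ in $M \backslash K$, while Corollary \ref{BoundednessOfSolutions} gives boundedness of $u$ and $v$. I would also note the intermediate reduction already spelled out in the text — that \eqref{potencilBoundOfSolAndPowerGrowth} with $R_2^k = 2^{k+1}R_0$ is a special case of the ``$C, q$, sequence'' condition stated just before the corollary — so one may either invoke that or carry out the dyadic computation directly.

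The only mild subtlety, rather than a real obstacle, is making sure the chosen radii are legitimate inputs to Theorem \ref{GenTheorForCompWithVolGrowth}: one needs $R_2^k > R_1^k \to \infty$ (clear) and, implicitly through Proposition \ref{LpBoundnessOfDerivativeOfSolutionMain} and the construction of $\eta$ in Lemma \ref{etaDefinition}, that the balls $B(o,R_1^k)$ eventually contain the relevant bounded set $U \supset K$ — but this is guaranteed for $k$ large since $R_1^k = 2^k R_0 \to \infty$, and discarding finitely many indices does not affect boundedness of the sequences. No case distinction on the size of $p$ relative to $2$ is needed here, since we are routing through Theorem \ref{GenTheorForCompWithVolGrowth}, which already handles all $p > 1$. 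Thus the proof is essentially a one-line substitution of dyadic radii followed by an appeal to the general comparison theorem.
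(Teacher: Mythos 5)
Your proposal is correct and follows essentially the same route as the paper, which proves this corollary implicitly via the remark immediately preceding it: \eqref{potencilBoundOfSolAndPowerGrowth} yields sequences along which \eqref{boundedCapacityTimesSolutionHypotheses} is bounded, so Theorem \ref{GenTheorForCompWithVolGrowth} and Corollary \ref{BoundednessOfSolutions} apply. Your dyadic choice $R_1^k = 2^kR_0$, $R_2^k=2^{k+1}R_0$ simply makes explicit the cancellation of exponents that the paper leaves to the reader, and the computation is correct.
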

From this corollary, for $p > q$, we do not need to assume that the solutions are bounded to have a comparison principle. It is sufficient that they satisfy \eqref{potencilBoundOfSolAndPowerGrowth}. Anyway we conclude that they are bounded from Corollary \ref{BoundednessOfSolutions}.   On the other hand, if $p < q$, we cannot guarantee that $M$ is $p$-parabolic, since the quotient
given by \eqref{boundedCapacityWITHOUTSolutionHypotheses} may diverge to infinity. Still, we have some comparison principle provided we assume that $u$ and $v$ goes to zero at infinity with some speed.

\begin{remark}
If the sequence of $Q_k$, defined in \eqref{boundedCapacityWITHOUTSolutionHypotheses}, is bounded, then the sequence of capacities $\mathrm{\; cap}_{p} \, (R_{1}^k,R_{2}^k,o)$ is also bounded, according to \eqref{capacityBoundedByVolume}.
Then, from (c) of Theorem \ref{MT}, we conclude that $M$ is $p$-parabolic. Therefore, from Theorem \ref{generalResultForP-parabolicManifold}, it holds the comparison principle for bounded solutions.
\end{remark}

This implies the uniqueness of bounded solution for a Dirichlet problem in exterior domains. Morover,  from the fact that $v \equiv const.$ is a bounded solution of
\eqref{exteriorProblem}, we have the following extension of Liouville's result to exterior domains:

\begin{corollary}
Assume the same hypotheses as in Theorem \ref{GenTheorForCompWithVolGrowth} about $M$, $K$, $p$ and
$A$. Suppose that the quotient $Q_k$ of \eqref{boundedCapacityWITHOUTSolutionHypotheses} is bounded (or simply, $M$ is $p$-parabolic). If $u$ is a bounded weak solution of \eqref{exteriorProblem} and $u$ is
constant on $\partial K$, then $u$ is constant. More generally, given a continuous function $\phi$ on $\partial K$,  there exists at most one bounded solution of \eqref{exteriorProblem} such that $u =\phi$ on $\partial K$.
\end{corollary}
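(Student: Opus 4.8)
The plan is to deduce everything from the comparison principle for bounded solutions of the exterior problem \eqref{exteriorProblem}, which is exactly what the two alternative hypotheses of the corollary are designed to supply. If $M$ is $p$-parabolic, Theorem \ref{generalResultForP-parabolicManifold} already gives this comparison principle. If instead the quotients $Q_k$ from \eqref{boundedCapacityWITHOUTSolutionHypotheses} are bounded, then by \eqref{capacityBoundedByVolume} the capacities $\capa (R_1^k,R_2^k,o)$ are bounded too; by part (c) of Theorem \ref{MT} this again forces $M$ to be $p$-parabolic, so Theorem \ref{generalResultForP-parabolicManifold} applies. Alternatively, and more directly, for any pair of bounded solutions the sequences in \eqref{boundedCapacityTimesSolutionHypotheses} are then bounded (the $\max$-factors are finite because the solutions are bounded), so Theorem \ref{GenTheorForCompWithVolGrowth} applies as stated. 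Either way, I will have at my disposal the following fact: if $a,b \in C(\overline{M \backslash K}) \cap C^1(M \backslash K)$ are bounded weak solutions of \eqref{exteriorProblem} with $a \le b$ on $\partial K$, then $a \le b$ throughout $M \backslash K$.

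For the Liouville assertion, I would let $c$ be the constant value of $u$ on $\partial K$ and observe that, since $\mathcal{A}(0)=0$, the constant function $w \equiv c$ is a bounded weak solution of \eqref{exteriorProblem}, continuous up to $\partial K$. Applying the comparison fact first with $(a,b)=(u,w)$ and then with $(a,b)=(w,u)$ --- each time using $u = c = w$ on $\partial K$ --- yields $u \le c$ and $c \le u$ in $M \backslash K$, hence $u \equiv c$.

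For the uniqueness (``more generally'') assertion, I would take two bounded solutions $u_1, u_2$ of \eqref{exteriorProblem} with $u_1 = u_2 = \phi$ on $\partial K$; since $u_1 \le u_2$ on $\partial K$, the comparison fact gives $u_1 \le u_2$ in $M \backslash K$, and exchanging the roles of $u_1$ and $u_2$ gives the reverse inequality, so $u_1 = u_2$.

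The argument is a formal consequence of the comparison results of the previous sections, so the only points needing care are bookkeeping ones: checking that constants qualify as admissible comparison functions (immediate from $\mathcal{A}(0)=0$, which makes $\operatorname{div}\mathcal{A}(\nabla w)=0$ trivially) and making sure that, under the $Q_k$-bounded hypothesis, the capacity and integrability conditions feeding Theorem \ref{GenTheorForCompWithVolGrowth} (equivalently, the $p$-parabolicity needed for Theorem \ref{generalResultForP-parabolicManifold}) genuinely hold; both are covered by \eqref{capacityBoundedByVolume} together with the boundedness of the solutions involved. I do not anticipate any real obstacle.
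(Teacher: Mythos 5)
Your proof is correct and follows essentially the same route as the paper: the preceding remark in the paper already deduces the comparison principle for bounded solutions from the boundedness of $Q_k$ via \eqref{capacityBoundedByVolume} and Theorem \ref{MT}(c) (or directly from $p$-parabolicity via Theorem \ref{generalResultForP-parabolicManifold}), and the corollary is then obtained, exactly as you do, by comparing with the constant solution for the Liouville statement and by two-sided comparison for uniqueness.
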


\section{Comparison principle for rotationally symmetric manifolds}

According to Milnor's lemma in \cite{M}, a complete rotationally symmetric $2$-dimensional Riemannian manifold with the metric $d \, s^2=d\, r^2 + f^2(r)d\, \theta$ is parabolic if and only if 
$$ \int_a^{\infty} \frac{1}{f(r)} dr = +\infty $$
for some $a > 0$. For higher dimension and $p >1$, a complete rotationally symmetric $n$-dimensional Riemannian manifold 
$M=\mathbb{R^{+}} \times S^{n-1}$ with respect
to a point $o \in M$ endowed with the metric
\[
d\, s^{2} = d\, r^{2} + f^{2}(r) d\, \omega^{2},
\]
where $r=dist(x,o)$, $d\, \omega^{2}$ is the standard metric of $S^{n-1}$ and
$f$ is a $C^{1}$ positive function in $(0,+\infty)$,
is $p$-parabolic if and only if
\begin{equation}
 \int_a^{\infty} f^{-\frac{n-1}{p-1}}(r) dr = +\infty \quad \text{for some } \; a > 0. \label{IntegralOfFisInfinity}
\end{equation}
Indeed, that the divergence of this integral implies the $p$-parabolicity is a consequence of the second integral condition in \eqref{VolumeGrowthIlkkaCond} (or Ilkka's condition) and the fact that the volume of the ball $B(o,r)$ is $V(r)=n \omega_n \displaystyle \int_0^r (f(s))^{n-1} ds$, where $\omega_n$ is the volume of the unit ball in $\mathbb{R}^n$. The converse is true since the convergence of this integral implies the existence of a nonconstant bounded $p$-superharmonic function: for instance, consider the function $\eta_{a,+\infty}$ defined by 
$\eta_{a,+\infty}(x) = 1$ if  ${\rm dist}(x,o) \le a$ and 
$$   \eta_{a,+\infty}(x) =  \frac{ \displaystyle \int_{{\rm dist}(x,o)}^{+\infty} f^{\frac{1-n}{p-1}}(s) \, ds }{
\displaystyle \int_{a}^{+\infty} f^{\frac{1-n}{p-1}}(s)\, ds } \quad {\rm if } \quad {\rm dist}(x,o) > a,   $$
that is $p$-harmonic outside the ball $B(o,a)$ as we will see later and, therefore, $p$-superharmonic in $M$. Hence, from Theorems \ref{p-harmonicResultForExteriorProb} and \ref{generalResultForP-parabolicManifold}, we conclude respectively the following results:
\begin{corollary}
Let $M=\mathbb{R^{+}} \times S^{n-1}$ be a complete rotationally symmetric $n$-dimensional Riemannian manifold 
with respect
to a point $o \in M$ endowed with the metric
\[
d\, s^{2} = d\, r^{2} + f^{2}(r) d\, \omega^{2} .
\]
Then the comparison principle holds for the exterior problem $\Delta_p v = 0$ in $M \backslash K$, where $K$ is any compact set, if and only if condition \eqref{IntegralOfFisInfinity} holds.
\end{corollary}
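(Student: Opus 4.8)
The plan is to deduce this corollary directly from the general equivalence already established, so the work is essentially a bookkeeping exercise tying the rotationally symmetric structure to the abstract hypotheses. First I would invoke Theorem \ref{p-harmonicResultForExteriorProb}: for a complete noncompact $M$, the comparison principle for $\Delta_p v = 0$ on every exterior domain $M\backslash K$ holds if and only if $M$ is $p$-parabolic. Hence it suffices to show that the rotationally symmetric manifold $M = \mathbb{R}^{+}\times S^{n-1}$ with metric $d\,s^2 = d\,r^2 + f^2(r)\,d\,\omega^2$ is $p$-parabolic precisely when \eqref{IntegralOfFisInfinity} holds. This last equivalence is exactly what the paragraph preceding the corollary records, so the proof amounts to assembling those two facts.

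For the ``if'' direction (divergence of the integral $\Rightarrow$ comparison principle), I would argue: the volume of the geodesic ball is $V(r) = n\omega_n \int_0^r f(s)^{n-1}\,ds$, so $V'(r) = n\omega_n f(r)^{n-1}$ up to the dimensional constant, and therefore $\int^{+\infty}\big(1/V'(r)\big)^{1/(p-1)}\,dr$ differs from $\int^{+\infty} f(r)^{-(n-1)/(p-1)}\,dr$ only by a positive multiplicative constant. Thus divergence of \eqref{IntegralOfFisInfinity} gives the second of Holopainen's conditions \eqref{VolumeGrowthIlkkaCond}, which forces $p$-parabolicity; then Theorem \ref{MT} (equivalence of (a) and (b)), or equivalently Theorem \ref{generalResultForP-parabolicManifold} specialized to $A(s)=s^{p-1}$, yields the comparison principle on every exterior domain.

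For the ``only if'' direction I would argue the contrapositive via an explicit barrier. If $\int_a^{\infty} f^{-(n-1)/(p-1)}(r)\,dr < +\infty$ for some $a>0$, define $\eta_{a,+\infty}$ as in the excerpt: it equals $1$ on $B(o,a)$ and equals the normalized tail integral $\big(\int_{r}^{+\infty} f^{(1-n)/(p-1)}\big)\big/\big(\int_a^{+\infty} f^{(1-n)/(p-1)}\big)$ for $r = \mathrm{dist}(x,o) > a$. A direct computation of $\Delta_p$ in the warped coordinates — using that for a radial function $\psi(r)$ one has $\Delta_p \psi = f^{1-n}\big(f^{n-1}|\psi'|^{p-2}\psi'\big)'$ — shows that $\eta_{a,+\infty}$ is $p$-harmonic outside $B(o,a)$, hence globally $p$-superharmonic, bounded, nonconstant, and $0<\eta_{a,+\infty}<1$ outside the closed ball. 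Taking $K_0 = \overline{B(o,a)}$ (which has smooth boundary) and comparing $\eta_{a,+\infty}$ against the constant $u\equiv 1$: we have $u = \eta_{a,+\infty} = 1$ on $\partial K_0$ but $\eta_{a,+\infty} < 1 = u$ in $M\backslash K_0$, so the comparison principle fails for this exterior domain. This contradicts the hypothesis that comparison holds for every compact $K$, completing the proof.

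The only genuine computation is verifying the $p$-harmonicity of $\eta_{a,+\infty}$, i.e. checking that with $\psi' \propto f^{(1-n)/(p-1)}$ one gets $f^{n-1}|\psi'|^{p-2}\psi' \equiv \mathrm{const}$, which is immediate since $|\psi'|^{p-1} \propto f^{1-n}$; so I expect no real obstacle — the substance is entirely in the already-cited Theorem \ref{p-harmonicResultForExteriorProb} and the volume-growth criterion \eqref{VolumeGrowthIlkkaCond}, and this corollary is their clean specialization to the rotationally symmetric case.
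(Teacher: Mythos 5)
Your proposal is correct and follows essentially the same route as the paper: the ``if'' direction via Holopainen's second volume criterion \eqref{VolumeGrowthIlkkaCond} applied to $V'(r)=n\omega_n f(r)^{n-1}$, and the converse via the explicit radial $p$-harmonic function $\eta_{a,+\infty}$, all funneled through Theorem \ref{p-harmonicResultForExteriorProb}. The only (harmless) variation is that for the converse you exhibit the failure of comparison directly by testing $\eta_{a,+\infty}$ against the constant $1$ on $M\backslash\overline{B(o,a)}$, whereas the paper first concludes non-$p$-parabolicity from the existence of this nonconstant bounded $p$-superharmonic function and then invokes the equivalence.
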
  
\begin{corollary}
Let ($M$,$d s^2$) be as in the previous corollary and assume $A$ satisfies \eqref{AdeZeroeZero} -
\eqref{UpperBoundForA} for $p >1$. Suppose also that $u,v \in C(\overline{M \backslash K}) \cap C^{1}(M
\backslash K)$ are bounded weak solutions of \eqref{exteriorProblem} in
$M\backslash K$, where $K$ is a compact set of $M$, and that condition \eqref{IntegralOfFisInfinity} is satisfied.
If $u \le v$ on $\partial K$, then
\[
u \le v \quad\text{in} \quad M \backslash K.
\]
In particular, if $u$ is constant on $\partial K$, then $u$ is constant in
$M\backslash K$. \label{comparisonResultProvidedIntegralOfFisInfinity}
\end{corollary}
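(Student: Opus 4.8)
The plan is to reduce the whole statement to Theorem~\ref{generalResultForP-parabolicManifold}, whose only hypothesis is that $M$ be $p$-parabolic; so the first step is to check that \eqref{IntegralOfFisInfinity} forces $p$-parabolicity. For the metric $ds^2 = dr^2 + f^2(r)\,d\omega^2$ the ball volume is $V(R) = n\omega_n\int_0^R f(s)^{n-1}\,ds$, hence $V'(R) = n\omega_n f(R)^{n-1}$ and $\bigl(1/V'(R)\bigr)^{1/(p-1)}$ is a fixed constant times $f(R)^{-(n-1)/(p-1)}$. Thus \eqref{IntegralOfFisInfinity} is precisely the second alternative of Holopainen's volume growth condition \eqref{VolumeGrowthIlkkaCond}, and \cite{H2} together with Theorem~\ref{MT} gives that $M$ is $p$-parabolic. (An entirely internal alternative: on a model manifold the $p$-capacity of the annulus $\mathpzc{A}_{R_1,R_2}$ equals $n\omega_n\bigl(\int_{R_1}^{R_2} f^{-(n-1)/(p-1)}(s)\,ds\bigr)^{-(p-1)}$, obtained from the explicit radial $p$-harmonic function with boundary values $1$ on $\partial B(o,R_1)$ and $0$ on $\partial B(o,R_2)$; under \eqref{IntegralOfFisInfinity} this tends to $0$ as $R_2\to\infty$ for fixed $R_1$, so $\mathrm{Cap}_p(B(o,R_1))=0$ by \eqref{CapaIsLimitOfcapa} and $M$ is $p$-parabolic.)

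Once $p$-parabolicity is secured, Theorem~\ref{generalResultForP-parabolicManifold} applies verbatim to the compact set $K$ and the bounded weak solutions $u,v \in C(\overline{M\backslash K})\cap C^1(M\backslash K)$ of \eqref{exteriorProblem} with $u \le v$ on $\partial K$, yielding $u \le v$ in $M\backslash K$; this is the first assertion and requires no further argument.

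For the ``in particular'' clause, suppose $u$ is constant on $\partial K$, say $u \equiv c$ there. Since $\mathcal{A}(0)=0$, the constant function $w \equiv c$ satisfies $\operatorname{div}\mathcal{A}(\nabla w)=0$ in $M\backslash K$ and is therefore a bounded weak solution of \eqref{exteriorProblem} equal to $u$ on $\partial K$. Applying the comparison inequality just proved to the pair $(u,w)$ gives $u \le c$ in $M\backslash K$, and applying it to the pair $(w,u)$ gives $c \le u$; hence $u \equiv c$ in $M\backslash K$. I do not expect a genuine obstacle here: the argument is essentially bookkeeping, the only mildly delicate points being the identification of \eqref{IntegralOfFisInfinity} with \eqref{VolumeGrowthIlkkaCond} via the volume formula and the observation that constants are admissible solutions of \eqref{exteriorProblem}; both are immediate.
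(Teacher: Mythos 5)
Your proposal is correct and follows essentially the same route as the paper: the paper likewise identifies \eqref{IntegralOfFisInfinity} with the second alternative of Holopainen's condition \eqref{VolumeGrowthIlkkaCond} via $V'(R)=n\omega_n f(R)^{n-1}$ to get $p$-parabolicity, and then invokes Theorem \ref{generalResultForP-parabolicManifold}. The comparison-with-constants argument for the final clause is the intended (and standard) one, so there is nothing to add.
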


Now we show some comparison result that holds for non bounded solutions or hyperbolic manifolds. Taking the point $o$ as the reference, let $\eta=\eta_{R_{1},R_{2}}$ be the
function defined in Lemma \ref{etaDefinition} for $R_{2} > R_{1} > 0$. Since $\eta$ is $p$-harmonic in $\mathpzc{A}_{R_{1},R_{2}}$,  it is radially symmetric due to the symmetry of $M$ with respect to
$o$. Then $\eta$ is a function of $r$ in $\mathpzc{A}_{R_{1},R_{2}}$ and
satisfies the equation
\[
\frac{\Delta_p\eta}{|\nabla \eta|^{p-2}}= (p-1) \eta^{\prime\prime}(r) + (n-1)\frac{f^{\prime}(r)}{f(r)}
\eta^{\prime}(r) = 0 \quad\text{for} \quad r \in(R_{1},R_{2}),
\]
where the prime denotes the derivative with respect to $r$. Moreover,
$\eta(R_{1})=1$ and $\eta(R_{2})=0$. Hence
\[
\eta(r) = \frac{ \displaystyle \int_{r}^{R_{2}} f^{\frac{1-n}{p-1}}(s) \, ds }{
\displaystyle \int_{R_{1}}^{R_{2}} f^{\frac{1-n}{p-1}}(s)\, ds }.
\]
Then, using that the element of volume is $dx = n \omega_{n}f^{n-1}(r) dr$, we
have
\[
\int_{\mathpzc{A}_{R_{1},R_{2}}} |\nabla\eta|^{p} dx = n \omega_{n}
\int_{R_{1}}^{R_{2}} (\eta^{\prime })^{p} f^{n-1}(r) \, dr = \frac{n \omega_{n}}{
\left( \displaystyle \int_{R_{1}}^{R_{2}} f^{\frac{1-n}{p-1}}(s)\, ds \right)^{p-1}}.%
\]
Observe that the capacity $\capa (R_1,R_2) := \capa (B(o,R_1);B(o,R_1))$ is attained at $\eta_{R_1,R_2}$, since
it is harmonic and, therefore, minimizes the Dirichlet integral over $\mathpzc{A}_{R_{1}, R_{2}}$ in the set $$\{ v \in C^0(\overline{\mathpzc{A}_{R_{1}, R_{2}}}) \cap C^1(\mathpzc{A}_{R_{1}, R_{2}})  :  v=1 \; \text{on} \; \partial B(o,R_1) \; \text{and } \; v =0 \; \text{on} \;  \partial B(o,R_2) \}.$$
Therefore, from the last equation,
\[
\left( \max_{\mathpzc{A}_{R_{1}, R_{2}}}
|u| \right)^{p} \capa(R_1,R_2) = n \omega_{n}\left( \max_{\mathpzc{A}_{R_{1}, R_{2}}}
|u| \right)^{p} \left(  \int_{R_{1}}^{R_{2}} f^{-\frac{n-1}{p-1}}(s)\, ds \right) ^{-(p-1)}%
\]

\noindent From this expression, Theorem \ref{GenericComparisonTheorem} and Remark \ref{comparisonRemarkForPEqualTo2}, we have the following:

\begin{corollary}
Assume the same hypotheses as in the previous corollary
about $M$, $K$ and $A$. Let $u,v \in C(\overline{M \backslash K})
\cap C^{1}(M \backslash K)$ be weak solutions of \eqref{exteriorProblem} in
$M\backslash K$. Suppose also that
\begin{equation}
\frac{\max_{\mathpzc{A}_{R_{1}^{k}, R_{2}^{k}}} |u|^{p}}{ \left(\displaystyle \int
_{R_{1}^{k}}^{R_{2}^{k}} f^{-\frac{n-1}{p-1}}(s)\, ds \right)^{p-1}} \quad\text{and} \quad\frac
{\max_{\mathpzc{A}_{R_{1}^{k}, R_{2}^{k}}} |v|^{p}}{ \left(\displaystyle \int
_{R_{1}^{k}}^{R_{2}^{k}} f^{-\frac{n-1}{p-1}}(s)\, ds \right)^{p-1}} \quad\text{are bounded,}%
\label{conditionOnfToEstimatePhi2}
\end{equation}
where $R_{1}^{k}$ and $R_{2}^{k}$ are sequences such that $R_{2}^{k} >
R_{1}^{k} \to+\infty$. For $p=2$, this condition can be replaced by

\begin{equation}
\frac{\max_{\mathpzc{A}_{R_{1}^{k}, R_{2}^{k}}} |v- u|^{2}}{
\displaystyle \int_{R_{1}^{k}}^{R_{2}^{k}} f^{1-n}(s)\, ds } \to
0.\label{conditionOnfToEstimatePhi1}%
\end{equation}
If $u \le v$ on $\partial K$, then
\[
u \le v \quad\text{in} \quad M \backslash K.
\]
\label{RotationallySymmetricManifoldComparisonResult1}
\end{corollary}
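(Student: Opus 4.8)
The plan is to reduce this corollary directly to the general comparison results already proved, namely Theorem \ref{GenericComparisonTheorem} and Remark \ref{comparisonRemarkForPEqualTo2}, by exploiting the explicit computation of the capacity of annuli carried out just before the statement. The key identity available is
\[
\left( \max_{\mathpzc{A}_{R_{1}^{k}, R_{2}^{k}}} |u| \right)^{p} \capa(R_1^k,R_2^k)
= n \omega_{n}\left( \max_{\mathpzc{A}_{R_{1}^{k}, R_{2}^{k}}} |u| \right)^{p}
\left( \int_{R_{1}^{k}}^{R_{2}^{k}} f^{-\frac{n-1}{p-1}}(s)\, ds \right)^{-(p-1)},
\]
and the analogous one with $v$ in place of $u$. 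First I would observe that hypothesis \eqref{conditionOnfToEstimatePhi2} says precisely that, up to the fixed constant $n\omega_n$, the sequences
\[
\left( \max_{\mathpzc{A}_{R_{1}^{k}, R_{2}^{k}}} |u| \right)^{p} \capa(R_1^k,R_2^k)
\quad\text{and}\quad
\left( \max_{\mathpzc{A}_{R_{1}^{k}, R_{2}^{k}}} |v| \right)^{p} \capa(R_1^k,R_2^k)
\]
are bounded. These are exactly the two quantities appearing in \eqref{boundedCapacityHypotheses}. Hence Theorem \ref{GenericComparisonTheorem} applies verbatim: since $u,v \in C(\overline{M\backslash K}) \cap C^1(M\backslash K)$ are weak solutions of \eqref{exteriorProblem} with $A$ satisfying \eqref{AdeZeroeZero}--\eqref{UpperBoundForA} and $p>1$, and $u \le v$ on $\partial K$, we conclude $u \le v$ in $M\backslash K$.

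For the case $p=2$, I would instead invoke Remark \ref{comparisonRemarkForPEqualTo2}, which relaxes the boundedness of the terms in \eqref{boundedCapacityHypotheses} to the single condition \eqref{differenceBetweenUandVGoesToZero}, namely
\[
\max_{\mathpzc{A}_{R_{1}^{k}, R_{2}^{k}}} |v-u|^{p}\, \capa(R_1^k,R_2^k) \to 0
\quad\text{as } k \to +\infty.
\]
Using the explicit capacity formula with $p=2$, so that $(p-1)=1$ and $f^{-\frac{n-1}{p-1}} = f^{1-n}$, this is exactly
\[
n\omega_n\, \frac{\max_{\mathpzc{A}_{R_{1}^{k}, R_{2}^{k}}} |v-u|^{2}}{\displaystyle \int_{R_{1}^{k}}^{R_{2}^{k}} f^{1-n}(s)\, ds} \to 0,
\]
which is condition \eqref{conditionOnfToEstimatePhi1} up to the constant $n\omega_n$. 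Thus Remark \ref{comparisonRemarkForPEqualTo2} yields $u \le v$ in $M \backslash K$ under the alternative hypothesis.

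The only genuinely non-routine point is making sure the capacity identity derived above is legitimately applicable here: it was established for the function $\eta_{R_1,R_2}$ on the annulus, and one must check that $\capa(R_1^k,R_2^k)$ as used in Theorems \ref{LpBoundnessOfDerivativeOfSolutionMain} and \ref{GenericComparisonTheorem} (an infimum over $\mathcal{F}_{B(o,R_1),B(o,R_2)}$ of the global Dirichlet integral) coincides with $n\omega_n \left( \int_{R_1^k}^{R_2^k} f^{-\frac{n-1}{p-1}}\right)^{-(p-1)}$. This is exactly the content of the remark preceding the corollary: $\eta$ is $p$-harmonic on the annulus with the correct boundary values, hence minimizes the Dirichlet integral, and the proof of Proposition \ref{LpBoundnessOfDerivativeOfSolutionMain} already identifies $\int_{\mathpzc{A}_{R_1,R_2}} |\nabla\eta|^p\,dx$ with $\capa(R_1,R_2)$. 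So this verification is immediate given what precedes. Everything else is a direct citation, so the corollary follows.
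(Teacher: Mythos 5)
Your proposal is correct and follows essentially the same route as the paper: the explicit formula $\capa(R_1^k,R_2^k)=n\omega_n\bigl(\int_{R_1^k}^{R_2^k} f^{-\frac{n-1}{p-1}}\bigr)^{-(p-1)}$ identifies hypothesis \eqref{conditionOnfToEstimatePhi2} with \eqref{boundedCapacityHypotheses} and \eqref{conditionOnfToEstimatePhi1} with \eqref{differenceBetweenUandVGoesToZero}, after which Theorem \ref{GenericComparisonTheorem} and Remark \ref{comparisonRemarkForPEqualTo2} apply directly. The paper itself gives no further argument beyond this citation, so your verification that the radial $p$-harmonic $\eta$ realizes the capacity is, if anything, slightly more careful than the original.
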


As an application we present a result where the functions $u$
and $v$ can go to infinity, provided that their growth are bounded by some
specific function.

\begin{corollary}
Let $(M,d\,s^2)$ be as in the previous corollary,
where $0 < f(r) \le E_{1} r$ for some $E_{1} > 0$ and any large $r$. Suppose
that $A$ satisfies \eqref{AdeZeroeZero} - \eqref{UpperBoundForA} with $p = n$.
Let $K \subset M$ be a compact set and $u,v \in C(\overline{M \backslash K})
\cap C^{1}(M \backslash K)$ be weak solutions of \eqref{exteriorProblem} in
$M\backslash K$ such that
\begin{equation}
|u(r)| \le E_{2} \, (\ln r)^{\frac{n-1}{n}} \quad\text{and} \quad|v(r)| \le E_{2} \,
(\ln r)^{\frac{n-1}{n}}\label{rootSquareGrowthOfLogarithm}%
\end{equation}
for some $E_{2} > 0$ and any large $r$. If $u \le v$ on $\partial K$, then
\[
u \le v \quad\text{in} \quad M \backslash K.
\]

\end{corollary}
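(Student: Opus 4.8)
The plan is to deduce the statement from Corollary~\ref{RotationallySymmetricManifoldComparisonResult1} applied with $p=n$; note that that corollary does not require $u$ and $v$ to be bounded, which is exactly what is needed here since $(\ln r)^{(n-1)/n}\to+\infty$. Thus the whole proof reduces to checking that the two quotients in \eqref{conditionOnfToEstimatePhi2} remain bounded along a suitable pair of sequences $R_1^k<R_2^k\to+\infty$. The key observation is that for $p=n$ the exponent $\frac{n-1}{p-1}$ equals $1$, so $\int_{R_1^k}^{R_2^k}f^{-\frac{n-1}{p-1}}(s)\,ds=\int_{R_1^k}^{R_2^k}\frac{ds}{f(s)}$, and the hypothesis $f(r)\le E_1 r$ (for all large $r$) turns this into a logarithmic integral comparable to $\ln(R_2^k/R_1^k)$.

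Concretely, I would fix $R_*>1$ large enough that $K\subset B(o,R_*)$, that $f(r)\le E_1 r$ for $r\ge R_*$, and that \eqref{rootSquareGrowthOfLogarithm} holds for $r\ge R_*$, and then take $R_1^k:=R_*+k$ and $R_2^k:=(R_*+k)^2$, so that $R_1^k<R_2^k\to+\infty$. Since $f(s)\le E_1 s$ on $[R_1^k,R_2^k]$,
\[
\int_{R_1^k}^{R_2^k} f^{-\frac{n-1}{p-1}}(s)\,ds\ \ge\ \frac{1}{E_1}\int_{R_1^k}^{R_2^k}\frac{ds}{s}\ =\ \frac{1}{E_1}\ln\frac{R_2^k}{R_1^k}\ =\ \frac{1}{E_1}\ln(R_*+k),
\]
while, using that every point of $\mathpzc{A}_{R_1^k,R_2^k}$ lies at distance less than $R_2^k$ from $o$ and that $r\mapsto(\ln r)^{(n-1)/n}$ is increasing for $r>1$, the bound \eqref{rootSquareGrowthOfLogarithm} gives
\[
\max_{\mathpzc{A}_{R_1^k,R_2^k}}|u|^{p}\ \le\ E_2^{\,n}\bigl(\ln R_2^k\bigr)^{n-1}\ =\ E_2^{\,n}\bigl(2\ln(R_*+k)\bigr)^{n-1},
\]
and likewise for $v$. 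Dividing one bound by the other, the quotient in \eqref{conditionOnfToEstimatePhi2} is at most $(2E_1)^{n-1}E_2^{\,n}$ for every $k$, and the same holds with $v$ in place of $u$; hence both quantities in \eqref{conditionOnfToEstimatePhi2} form bounded sequences.

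With this verified, Corollary~\ref{RotationallySymmetricManifoldComparisonResult1} applies directly (with $u\le v$ on $\partial K$) and yields $u\le v$ in $M\backslash K$, as claimed. I do not expect a genuine obstacle: the only delicate point is the choice of the sequences, where one must keep $\ln R_2^k$ and $\ln(R_2^k/R_1^k)$ comparable --- a multiplicative relation such as $R_2^k=(R_1^k)^2$ works, whereas an additive one such as $R_2^k=R_1^k+1$ would make the quotient diverge --- together with the elementary monotonicity of $r\mapsto(\ln r)^{(n-1)/n}$ used to replace the pointwise growth bound by a bound on the supremum over the annulus.
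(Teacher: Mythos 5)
Your proposal is correct and follows essentially the same route as the paper: for $p=n$ the exponent $\tfrac{n-1}{p-1}$ is $1$, the choice $R_2^k=(R_1^k)^2$ makes $\ln(R_2^k/R_1^k)$ and $\ln R_2^k$ comparable, the bound $f(r)\le E_1 r$ gives $\int_{R_1^k}^{R_2^k}f^{-1}\ge E_1^{-1}\ln R_1^k$, and the quotient in \eqref{conditionOnfToEstimatePhi2} is bounded by $(2E_1)^{n-1}E_2^{\,n}$, exactly as in the paper. The conclusion via Corollary~\ref{RotationallySymmetricManifoldComparisonResult1} is the same.
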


\begin{proof}
By hypothesis, there exists $R_0 > 0$ such that $ f(r) \le E_1 r$ for $r \ge R_0$.
Then,
$$ \int_{R_1^k}^{R_2^k} \frac{1}{f(s)} \, ds \ge \int_{R_1^k}^{R_2^k} \frac{1}{E_1 s} \, ds = \frac{1}{E_1} \ln \left( \frac{R_2^k}{ R_1^k} \right) \quad \text{for} \quad R_2^k > R_1^k > R_0.$$
Let $(R_k)$ be a sequence that goes to infinity. Take
$$R_1^k=R_k \quad \text{and} \quad R_2^k = (R_1^k)^2=(R_k)^2.$$
Hence, using the last inequality, we have
$$ \int_{R_1^k}^{R_2^k} \frac{1}{f(s)} \, ds \ge   \frac{1}{E_1} \ln R_k. $$
Therefore, using the growth hypothesis about $u$ and $v$, we have
$$ \frac{\max_{\mathpzc{A}_{R_1^k, R_2^k}} |u|^n}{\left( \displaystyle \int_{R_1^k}^{R_2^k} (f(s))^{-1}\, ds \right)^{n-1} } \le \frac{E_1^{n-1} E_2^n (\ln R_2^k)^{n-1}}{(\ln R_k)^{n-1}}= (2E_1)^{n-1}E_2^n $$
and a similar estimate for $v$.
Then, from Corollary \ref{RotationallySymmetricManifoldComparisonResult1}, we conclude the result.
\end{proof}

\begin{remark}
This corollary holds, for instance, for any complete noncompact rotationally symmetric Riemannian
manifold such that the sectional curvature is nonnegative, since $f(r) \le r$
in this case.

It can be applied also for some Hadamard manifolds provided the curvature goes
to zero sufficiently fast. 

\end{remark}

\newpage\noindent Ari Aiolfi \newline\noindent Universidade Federal de Santa
Maria\newline\noindent Brazil\newline\noindent ari.aiolfi@ufsm.br

\medskip

\noindent Leonardo Bonorino\newline\noindent Universidade Federal do Rio
Grande do Sul\newline\noindent Brazil\newline\noindent leonardo.bonorino@ufrgs.br

\medskip

\noindent Jaime Ripoll\newline\noindent Universidade Federal do Rio Grande do
Sul\newline\noindent Brazil\newline\noindent jaime.ripoll@ufrgs.br

\medskip

\noindent Marc Soret

\noindent Universit\'{e} de Tours \newline\noindent France

\noindent marc.soret@idpoisson.fr

\medskip

\noindent Marina Ville

\noindent Univ Paris Est Creteil, CNRS, LAMA, F-94010 Creteil \newline%
\noindent France

\noindent villemarina@yahoo.fr


\begin{thebibliography}{99}                                                                                               %
\bibitem {BSZ}L.P. Bonorino., A.R. Silva, and P.R.A. Zingano,
\emph{Liouville's theorem and comparison results for solutions of degenerate
elliptic equations in exterior domains}, J. Math. Anal. Appl, vol.
\textbf{463}, no. 2\;(2018), 794-809.

\bibitem {D}L. Damascelli, \emph{Comparison theorems for some quasilinear
degenerate elliptic operators and applications to symmetry and monotonicity
results}, Ann. Inst. Henri Poincar\'e, vol.\;\textbf{15}, no. 4\;(1998), 493-516.

\bibitem {Ev}L. C. Evans, Partial Differential Equations, American
Mathematical Society, Graduate studies in Mathematics \textbf{19}, 1998.

\bibitem {EP}A. Esteve and V. Palmer, On the characterization of parabolicity in
Real SpaceForms, Preprint 2009


\bibitem {Gr0}A. Grigor'yan, Analytic and geometric background of recurrence and non-explosion of the Brownian
motion on Riemannian manifolds, Bull. Amer. Math. Soc. \textbf{36} (1999) 135-249.

\bibitem {Gr}A. Grigor'yan, Isoperimetric inequalities and capacities on
Riemannian manifolds. In: Rossmann J., Tak\'ac P., Wildenhain G. (eds) The
Maz'ya Anniversary Collection. Operator Theory: Advances and Applications, vol
109. Birkh\"auser (1999), 139-153.

\bibitem {H0}I. Holopainen, Nonlinear potential theory and quasiregular mappings on Riemannian mani-
folds, Ann. Acad. Sci. Fenn. Ser. A I Math. Diss. {\bf 74} (1990), 1-45.


\bibitem {H1}I. Holopainen, Positive solutions of quasilinear elliptic
equations on Riemannian manifolds, Proc. London Math. Soc. (3) 65 (1992), 651-672.

\bibitem {H1-5}I. Holopainen and S. Rickman, Classification of Riemannian manifolds in nonlinear potential
theory, Potential Anal. 2 (1993), 37-66.

\bibitem {H2}I. Holopainen, Quasiregular mappings and the $p$-Laplace
operator, Contemp. Math. 338 (2003), 219-239.

\bibitem {HMP}I. Holopainen, S. Markvorsen, and V. Palmer, \emph{$p$-Capacity
and $p$-Hyperbolicity of Submanifolds}, Rev. Mat. Iberoamericana 25 (2009), no.
2, 709-738.

\bibitem {HP}A. Hurtado and V. Pamer: A Note on the p-Parabolicity of
Submanifolds, Potential Analysis 34(2), 2009.

\bibitem {LT}P. Li and L.F. Tam, Green's functions, harmonic functions, and
volume comparison, J. Differential Geom. 41 (1995), 277-318. MR 96f:53054

\bibitem {M}J. Milnor, \emph{On deciding whether a surface is parabolic or
hyperbolic}, The American Mathematical Monthly, vol.\;\textbf{84}, no.
1\;(1977), 43-46.


\bibitem {PS} P. Pucci and J. Serrin, The strong maximum principle revisited, J. Differential Equations 196 (2004) 1–66

\bibitem {RT}J. B. Ripoll and F. Tomi, Notes on the Dirichlet problem of a
class of second order elliptic partial differential equations on a Riemannian
manifold. Ensaios Matem\'{a}ticos, vol. 32, pp. 1--64. Sociedade Brasileira de
Matem\'{a}tica (2018).

\bibitem {S} C.-J. Sung, A note on the existence of positive Green's function,
J. Funct. Anal. 156 (1998), 199--207

\bibitem {T}M. Troyanov, Parabolicity of Manifolds, Siberian Advances in
Mathematics 9(1999) 125--150

\bibitem {WZ}X. Wang and L. Zhang, \emph{Local gradient estimate for
$p$-harmonic functions on Riemannian manifolds}, communications in analysis
and geometry, vol. 19, no. 4 \; (2011), 759-771.
\end{thebibliography}
\end{document}